\documentclass[12pt]{article}
\usepackage{enumerate,amssymb}
%\addtolength{\topmargin}{-34pt}
%\addtolength{\textheight}{68pt}
\setlength{\parindent}{0pt}
\setlength{\parskip}{1ex plus 0.5ex minus 0.2ex}
\newtheorem{theorem}{Theorem}[section]
\newtheorem{lemma}[theorem]{Lemma}

\newtheorem{corollary}[theorem]{Corollary}

\newenvironment{proof}[1][Proof]{\begin{trivlist}
\item[\hskip \labelsep {\bfseries #1}]}{\end{trivlist}}
\newenvironment{definition}[1][Definition]{\begin{trivlist}
\item[\hskip \labelsep {\bfseries #1}]}{\end{trivlist}}

\def\qed{{\hfill\hphantom{.}\nobreak\hfill$\blacksquare$}}
\title{Characterizations of finite classical polar spaces by intersection numbers with hyperplanes and spaces of codimension 2}

\author{S. De Winter and J. Schillewaert\\ Department of Pure Mathematics and Computer
Algebra, Ghent University\\ 
Krijgslaan 281 S 22, B-9000 Gent, Belgium\\
sgdwinte,jschille@cage.ugent.be}
\begin{document}

%\noindent {\bf Title:} Characterizations of finite classical polar spaces by intersection numbers with hyperplanes and speces of codimension $2$
%\medskip

%\noindent {\bf Authors:} Stefaan De Winter and Jeroen Schillewaert
%\medskip

%\noindent {\bf Affiliation (of both authors):} Ghent University, Dept. Pure Mathematics and Computer Algebra, Krijgslaan 289 S-22, B-9000 Gent, Belgium
%\medskip

%\noindent {\bf e-mail addresses:} sgdwinte@cage.ugent.be  (corresponding author), jschille@cage.ugent.be 
%\medskip

%\noindent {\bf Abreviated title:} Characterizations of polar spaces
%\medskip

%\noindent {\bf Mathematics Subject Classification:} primary 05B25; secondary 51A50
%\newpage

\date{September 28, 2007}
\maketitle
\abstract{
In this article we show that non-singular quadrics and non-singular Hermitian varieties are completely characterized by their intersection numbers with respect to hyperplanes and spaces of codimension 2. 
This strongly generalizes a result by Ferri and Tallini \cite{FT} and also provides necessary and sufficient conditions for quasi-quadrics ( respectively their Hermitian analogues) to be non-singular quadrics (respectively Hermitian varieties).}
\section{Introduction}
When Segre \cite{Segre} proved his celebrated characterization of conics (``every set of $q+1$ points in $\mathrm{PG}(2,q)$, $q$ odd, no three 
of which are collinear, is a conic''), he did more than proving a beautiful and interesting theorem; he in fact provided the starting point of a new 
direction in combinatorial geometry. In this branch of combinatorics the idea is to provide purely combinatorial characterizations of objects classically
defined in an algebraic way. This article wants to contribute to this theory by proving strong characterizations of classical finite polar spaces. 

\subsection{Polar spaces}

Though we suppose the reader to be familiar with polar spaces, that is, via polarities, quadrics and Hermitian varieties in $\mathrm{PG}(n,q)$,
we will quickly overview notation and terminology and recall some results that will be useful throughout this paper.

A finite classical non-singular polar space is one of the following:

\begin{itemize}
\item a hyperbolic quadric in $\mathrm{PG}(2n+1,q)$, denoted by $Q^+(2n+1,q)$;
\item a parabolic quadric in $\mathrm{PG}(2n,q)$, denoted by $Q(2n,q)$;
\item an elliptic quadric in $\mathrm{PG}(2n+1,q)$, denoted by $Q^-(2n+1,q)$;
\item a Hermitian variety in $\mathrm{PG}(n,q^2)$, denoted by $H(n,q^2)$;
\item a symplectic polar space in $\mathrm{PG}(2n+1,q)$, denoted by $W_{2n+1}(q)$.
\end{itemize}

{\bf Remark.} Throughout this article we will use the above notations for non-singular polar spaces. A cone with vertex a point $p$ or a line $L$ over a non-singular polar space, e.g. over a $Q(2n,q)$ will be denoted as $pQ(2n,q)$,
respectively $LQ(2n,q)$ (a small letter will always indicate the vertex of the cone is a point, while a capital letter will indicate the vertex is a line). 
With these conventions, the notation will always immediately tell whether the considered polar space is singular or non-singular. Only in the statements of our lemmas and theorems we will explicitly mention the (non-)singular character of the considered polar spaces.  

\subsection{Characterizations of polar spaces}

A {\it thick partial linear space of order $(s,t)$}, is a point-line geometry such that each line contains $s+1>2$ points, such that each point is contained in $t+1>2$ lines, and such that two distinct lines intersect in at most one point.

A thick partial linear space $S$ is said to be a {\it Shult space} if for every antiflag $(p,L)$ of $S$ either exactly $1$ or all points of $L$ are collinear with $p$. 

The following very nice theorem is the result of work done by Veldkamp \cite{Veldkamp59}, Tits \cite{Tits74}, Buekenhout and Shult \cite{Buekenhout-Shult74}.

\begin{theorem}\label{Shult}
Suppose that $S$ is a Shult space such that no point of $S$ is collinear with all other points of $S$, and such that there exists an antiflag $(p,L)$ with the property that $p$ is collinear with all points of $L$. Then $S$ is isomorphic to the point-line geometry of a 
finite classical non-singular polar space. 
\end{theorem}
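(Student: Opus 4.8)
The plan is to follow the Buekenhout--Shult strategy: extract from the one-or-all axiom enough structure to recognize $S$ as an abstract polar space in the sense of Tits, and then invoke Tits' classification. Throughout I write $p \sim q$ for ``$p$ is collinear with $q$'' (with $p \sim p$ by convention), and for a point $p$ I let $p^\perp$ denote the set of points collinear with $p$; for a set $X$ of points I put $X^\perp = \bigcap_{x \in X} x^\perp$. The Shult axiom says precisely that for every antiflag $(p,L)$ the set $p^\perp \cap L$ is either a single point or the whole of $L$. First I would check that $\sim$ is symmetric and that each $p^\perp$ is a subspace, so that the singular subspaces (those in which all points are mutually collinear) are well behaved and closed under the operations needed below.

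The first substantial step is to show that every singular subspace, equipped with the lines of $S$ it contains, is a projective space. The engine is the one-or-all axiom together with thickness: given two intersecting lines inside a singular subspace one uses the axiom to locate the unique or universal collinear points, shows they span a singular plane in which the Veblen--Young axiom holds, and then proceeds by induction on dimension. Thickness of the lines (at least three points) guarantees that the resulting projective spaces are non-degenerate, i.e.\ genuine $\mathrm{PG}(d,q)$'s rather than degenerate configurations.

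Next I would verify the defining axioms of a Tits polar space. The decisive one is: for a point $p$ and a maximal singular subspace $M$ with $p \notin M$, the set $p^\perp \cap M$ is a hyperplane of $M$. This is exactly where the Shult axiom does its work --- applied line by line inside $M$ it forces $p^\perp \cap M$ to be a subspace meeting every line, hence a hyperplane, provided $p^\perp \not\supseteq M$; and the hypothesis that no point is collinear with all others (non-degeneracy, i.e.\ the polar space has trivial radical) is what excludes the exceptional possibility $p^\perp \supseteq M$. The hypothesis that some antiflag $(p,L)$ has $p$ collinear with all of $L$ enters here in an essential way: such a configuration spans a singular plane, so $S$ has singular subspaces of dimension at least two and therefore rank at least three. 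In a generalized quadrangle the ``all'' case never occurs, so this hypothesis precisely rules out the rank-two situation.

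Finally, having realized $S$ as a non-degenerate polar space of rank at least three all of whose singular subspaces are projective spaces, I would invoke Tits' classification of such polar spaces: they are exactly the classical ones associated to a sesquilinear or pseudo-quadratic form. Since $S$ is finite and thick, the underlying field is finite and the only possibilities are the five families listed above, namely $Q^+(2n+1,q)$, $Q(2n,q)$, $Q^-(2n+1,q)$, $H(n,q^2)$ and $W_{2n+1}(q)$; matching the parameters identifies $S$ as the point-line geometry of one of them. The main obstacle is the second and third steps --- bootstrapping the purely local one-or-all condition into the global linear structure (singular subspaces being projective, perps of external points cutting maximal singular subspaces in hyperplanes) that is needed before Tits' theorem can be applied.
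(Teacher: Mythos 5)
The paper does not actually prove this statement: it is quoted as a known classification theorem, attributed to Veldkamp, Tits, and Buekenhout--Shult, and used later as a black box. Your outline reconstructs precisely the strategy of those cited works --- the Buekenhout--Shult reduction of the one-or-all axiom to Tits' polar-space axioms (singular subspaces carry projective-space structure; for a point $p$ outside a maximal singular subspace $M$ the set $p^\perp \cap M$ is a hyperplane of $M$; non-degeneracy from the hypothesis that no point is collinear with everything; rank at least $3$ from the special antiflag, since in a generalized quadrangle the ``all'' case never occurs), followed by Tits' classification --- so your approach agrees with the paper's source in the only sense possible. Two caveats. First, your sketch defers the genuinely hard work (the bootstrapping you yourself flag as the main obstacle) to the classification theorem; that is acceptable here exactly because the statement itself enters the paper as a citation, but as a self-contained proof it is an outline, not a proof. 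Second, your statement of Tits' classification is slightly off: polar spaces of rank $3$ need not be embeddable --- there exist non-embeddable ones whose planes are Moufang projective planes coordinatized by a Cayley division algebra --- so ``they are exactly the classical ones associated to a sesquilinear or pseudo-quadratic form'' is false in general. In the finite case these exceptions disappear, because by the Artin--Zorn theorem a finite alternative division ring is a field, so no finite Cayley division algebra exists; this is the step your phrase ``since $S$ is finite and thick, the underlying field is finite'' silently glosses over, and it should be made explicit before matching parameters with the five finite families.
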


If a Shult space is fully embedded in a projective space then the following theorem follows from Buekenhout and Lef\`evre \cite{BL}, and Lef\'evre-Percsy \cite{L1,L2}.

\begin{theorem}\label{BL}
Suppose $S$ is a Shult space  such that no point of $S$ is collinear with all other points of $S$. If $S$ is fully embedded in a projective space, then $S$ consists of the points and lines of a finite classical polar space. 
(Here fully embedded means that the set of lines of $S$ is a subset of the set of lines of the projective space and that the point set of $S$ is the set of all points contained in these lines.)
\end{theorem}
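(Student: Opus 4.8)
The plan is to reconstruct, from the purely combinatorial ``one or all'' axiom together with the full embedding, a reflexive sesquilinear or quadratic form on the underlying vector space whose associated polar space is exactly $S$. I would work inside $\Pi = \mathrm{PG}(N,q)$ with $N$ chosen so that $S$ spans $\Pi$ (replacing the ambient space by $\langle S\rangle$ if necessary), and I would first record the local structure forced by the embedding: if $\ell$ is a line of $\Pi$ carrying two points $x,y$ of $S$ that are collinear in $S$, then the unique $S$-line through $x,y$ must coincide with $\ell$, so $\ell$ is a line of $S$. Hence every line of $\Pi$ either lies in $S$, is tangent (meets $S$ in one point), or meets $S$ only in pairwise non-collinear points.

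Next I would fix a point $x\in S$ and let $T_x$ be the set of points of $S$ collinear with $x$ (including $x$). The heart of the local analysis is to show that $T_x$ is precisely the intersection of $S$ with a hyperplane $H_x$ of $\Pi$, the tangent hyperplane at $x$. The ``one or all'' axiom is exactly what is needed here: for a line $L$ of $S$ with $x\notin L$, either $x$ is collinear with all of $L$, so $L\subseteq T_x$, or with exactly one point of $L$, and this dichotomy is what prevents $T_x$ from spanning $\Pi$ and pins it down as a hyperplane section. The hypothesis that no point is collinear with all others guarantees $H_x \neq \Pi$ and, later, non-degeneracy.

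I would then study the map $\sigma\colon x \mapsto H_x$ and prove it is the restriction of a correlation of $\Pi$, that is, that there is a (possibly degenerate) reflexive sesquilinear form $f$ with $H_x = x^{\perp_f}$ for all $x\in S$. By the Birkhoff--von Neumann classification, a reflexive form over a finite field is, up to a scalar, alternating, symmetric, or Hermitian, and its absolute points yield respectively a symplectic space, a quadric, or a Hermitian variety. I would verify that the absolute points and absolute lines of $f$ coincide with the points and lines of $S$, and use the non-degeneracy extracted above to conclude that the resulting polar space is one of the listed classical ones. In characteristic $2$ I would additionally recover the quadratic form itself, not merely its bilinearisation, in the orthogonal case, since there a quadric is not determined by its associated alternating form.

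The main obstacle is this reconstruction step: proving that the combinatorially defined tangent-hyperplane map $\sigma$ is algebraic, i.e. induced by a single form. For rank at least $3$ one can shortcut part of the work by invoking Theorem~\ref{Shult} to obtain an abstract classical polar space and then appealing to the uniqueness of projective embeddings of higher-rank polar spaces; but the genuinely hard case is rank $2$, the embedded generalized quadrangles, which Theorem~\ref{Shult} does not reach because in a quadrangle an antiflag never exhibits the ``all'' case. There the form must be built directly from the tangent hyperplanes. Making that construction globally consistent, so that the locally defined hyperplanes fit together into one polarity, together with the characteristic~$2$ bookkeeping, is where essentially all the difficulty lies; this is precisely the content of the Buekenhout--Lef\`evre and Lef\`evre-Percsy analysis.
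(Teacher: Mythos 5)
The paper does not prove this statement at all: Theorem \ref{BL} is quoted as a known result, attributed to Buekenhout--Lef\`evre \cite{BL} and Lef\`evre-Percsy \cite{L1,L2}, so there is no internal proof to compare yours against. Judged on its own, your proposal is a faithful reconstruction of the architecture of those cited proofs: the observation that a full embedding forces the $S$-line through two collinear points to be the projective line through them, the tangent set $T_x$, the map $x\mapsto H_x$, the Birkhoff--von Neumann classification of reflexive forms, the extra work needed in characteristic $2$ to recover a quadratic rather than merely bilinear form, and the correct identification that Theorem \ref{Shult} plus uniqueness of embeddings of higher-rank polar spaces disposes of rank $\geq 3$, while fully embedded generalized quadrangles (where the ``all'' case of the one-or-all axiom never occurs) are the genuinely hard case.

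Nevertheless, as a proof it has a genuine gap, which you concede yourself in your final sentence. The two decisive steps --- (i) that $T_x$ is exactly the intersection of $S$ with a hyperplane of $\Pi$ (``pins it down as a hyperplane section'' is asserted, not argued; one must actually show that the union of the $S$-lines and tangent lines through $x$ is a proper subspace section, which requires a quantitative use of the one-or-all axiom and of the embedding), and (ii) that the locally defined hyperplanes $H_x$ glue into a single correlation of $\Pi$ induced by one reflexive sesquilinear (or quadratic) form --- are deferred to ``the Buekenhout--Lef\`evre and Lef\`evre-Percsy analysis,'' i.e.\ to the very results being proven. So what you have written is a correct roadmap with an accurate assessment of where the difficulty lies, but not a proof: a reader could not verify steps (i) and (ii) from your text without consulting the cited papers. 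Since the paper itself treats the theorem as a black box from the literature, this deferral is arguably the same move the authors make; but presented as a blind proof attempt, those two steps are missing, not merely compressed.
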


The following characterizations of quadrics and Hermitian varieties, which can be found in Hirschfeld and Thas \cite{HT}, will be of great 
importance in the proof of our main theorem. These theorems also provide nice examples of Segre-type theorems.  
\begin{definition}
A point set $K$ in $PG(n,q)$ is said to be of {\em type $(r_1,r_2,\cdots,r_s)$} if  $|L\cap K|\in\{r_1,r_{2},\cdots ,r_{s}\}$ for all lines $L$ of $PG(n,q)$. A point $p\in K$ 
is called {\em singular} with respect to $K$ if all lines through $p$ intersect $K$ either in 1 or in $q+1$ points. If a set $K$ contains a singular point,
then $K$ is called {\em singular}.
\end{definition}
The theorem below is an amalgation of results by Tallini-Scafati \cite{TS}, Hirschfeld and Thas \cite{HT80} and Glynn \cite{G}.
\begin{theorem}\label{Hermitian}
Let $K$ be a non-singular point set of type $(1,r,q^2+1)$ in $\mathrm{PG}(n,q^2)$, $n\geq4$ and $q>2$, satisfying the following properties :
\begin{itemize}
\item $3\leq r\leq q^2-1$;
\item there does not exist a plane $\pi$ such that every line of $\pi$ intersects $\pi\cap K$ in $r$ or $q^2+1$ points;
\end{itemize}
Then the set $K$ is the point set of a non-singular Hermitian variety $H(n,q^2)$.
\end{theorem}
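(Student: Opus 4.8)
The plan is to turn $K$ into a point-line geometry $S$ and to recognise it as a classical polar space via Theorem~\ref{BL}. I take as points of $S$ the points of $K$, and as lines of $S$ the lines of $\mathrm{PG}(n,q^2)$ lying entirely in $K$, i.e. the $(q^2+1)$-secants, which I will call \emph{generators}. Two generators meet in at most one point automatically, and each generator carries $q^2+1>2$ points, so once I know that generators exist, cover $K$, and are plentiful through each point, $S$ is a thick partial linear space fully embedded in $\mathrm{PG}(n,q^2)$. The whole argument then reduces to verifying the Shult \emph{one-or-all} axiom together with the full-embedding and non-singularity hypotheses of Theorem~\ref{BL}.

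The core is a purely planar statement. Given an antiflag $(p,L)$ of $S$, with $p\in K$ and $L$ a generator missing $p$, I work in the plane $\pi=\langle p,L\rangle$ and set $C=K\cap\pi$. Every one of the $q^2+1$ lines through $p$ in $\pi$ meets $L$, hence contains the two distinct $K$-points $p$ and $L\cap(px)$, so it is an $r$-secant or a generator; a point $x\in L$ is collinear with $p$ in $S$ exactly when $px$ is a generator. Writing $a$ for the number of generators of $\pi$ through $p$ and counting $C$ along the pencil at $p$ gives
\[
|C|=1+a\,q^2+(q^2+1-a)(r-1),
\]
and since the coefficient $q^2-r+1$ of $a$ is nonzero (as $r\le q^2-1$), $a$ is forced to be the \emph{same} for every $p\in C\setminus L$. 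The number of points of $L$ collinear with $p$ is precisely $a$, so the Shult axiom for $(p,L)$ is equivalent to $a\in\{1,q^2+1\}$, where $q^2+1$ corresponds to $\pi\subseteq K$.

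Proving $a\in\{1,q^2+1\}$ — equivalently, classifying the planar sections of type $(1,r,q^2+1)$ that contain a line — is the main obstacle, and this is where the two extra hypotheses enter. The forbidden-plane condition guarantees that $\pi$ always carries a tangent line, excluding the homogeneous behaviour in which every line is an $r$-secant or a generator; combined with the bounds $3\le r\le q^2-1$ and an analysis of how the generators of $\pi$ meet (either concurrent in a point of $L$, giving the cone case $a=1$, or covering $\pi$, giving $a=q^2+1$), this should rule out every intermediate value of $a$. The same analysis identifies $C$ in the relevant case as a degenerate Hermitian configuration and pins down $r=q+1$. This is exactly the delicate counting carried out by Tallini--Scafati, Hirschfeld--Thas and Glynn, and it is genuinely harder than the two-character case because three intersection numbers admit far more a priori solutions of the standard equations.

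It remains to feed $S$ into Theorem~\ref{BL}. A standard count with the standard equations for $(1,r,q^2+1)$-sets shows that generators must exist (the pure two-character case $(1,r)$, with no generators, is incompatible with the hypotheses); once one generator $M$ is available, applying the planar statement inside $\langle p,M\rangle$ for an arbitrary $p\in K$ shows that $p$ lies on at least one generator, so $K$ is the union of its generators and $S$ is fully embedded, thickness following from the same count. Finally, a point of $S$ collinear with all others is precisely a point all of whose lines meet $K$ in $1$ or $q^2+1$ points, i.e. a singular point of $K$, so the non-singularity hypothesis supplies the last condition of Theorem~\ref{BL}. Hence $S$ is the point-line geometry of a finite classical polar space fully embedded in $\mathrm{PG}(n,q^2)$ with lines of size $q^2+1$. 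Among these, the quadrics are excluded because their secant number is $2<r$, and the symplectic spaces because their point set is the whole ambient space (no tangent lines), leaving only the Hermitian variety $H(n,q^2)$; this also forces $r=q+1$, completing the argument.
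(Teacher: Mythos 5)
There is a genuine gap, and it sits exactly where the theorem's difficulty lies. Note first that the paper you are working from does not prove Theorem \ref{Hermitian} at all: it quotes it as an amalgamation of results of Tallini Scafati \cite{TS}, Hirschfeld--Thas \cite{HT80} and Glynn \cite{G}, so a blind proof attempt has to stand entirely on its own. Your scaffolding is fine up to a point: taking the $(q^2+1)$-secants as lines, the pencil count $|C|=1+aq^2+(q^2+1-a)(r-1)$ in $\pi=\langle p,L\rangle$ is correct, it does force $a$ to be constant on $(K\cap\pi)\setminus L$ since $q^2-r+1\neq 0$, and the endgame via Theorem \ref{BL} (including the correct identification of singular points of $K$ with points of $S$ collinear with all others, and the exclusion of quadrics and symplectic spaces) is sound. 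But the heart of the matter is precisely the claim $a\in\{1,q^2+1\}$, i.e.\ the one-or-all axiom, and for this you offer no argument: you write that the hypotheses ``should rule out every intermediate value of $a$'' and that this ``is exactly the delicate counting carried out by Tallini--Scafati, Hirschfeld--Thas and Glynn.'' Those are the very papers whose combined result \emph{is} the statement to be proved. So your proposal reduces the theorem to its hardest sub-claim and then cites the theorem's own sources for that sub-claim; as a self-contained proof it is circular, and every hypothesis that makes the theorem true ($3\leq r\leq q^2-1$, the forbidden-plane condition, non-singularity, $n\geq 4$) is consumed inside the step you skipped.

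A secondary gap of the same nature: the existence of generators is asserted via an unspecified ``standard count.'' Nothing in the definition of a set of type $(1,r,q^2+1)$ forces lines fully contained in $K$ to exist, so you must genuinely exclude sets of type $(1,r)$ satisfying the hypotheses in $\mathrm{PG}(n,q^2)$, $n\geq 4$ --- again a nontrivial part of the cited analysis, not a routine double count (thickness of $S$, i.e.\ at least three generators per point, needs an argument as well). In short, your architecture --- secant-line geometry, planar reduction, Theorem \ref{BL} --- is a plausible reconstruction of how such characterizations are established, but the two combinatorial pillars it rests on are exactly the content of the literature that the theorem summarizes, and they are absent from the proposal.
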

The result below was obtained by Tallini in \cite{Tall1} and \cite{Tall2}.
\begin{theorem}\label{quadric}
In $PG(n,q)$ with $n\geq 4$ and $q>2$, let $K$ be a non-singular point set of type $(0,1,2,q+1)$.\\
If $\frac{q^{n+1}-1}{q-1}>|K|\geq \frac{q^n-1}{q-1}$ then one of the following holds:
\begin{itemize}
\item[(i)] $|K|=\frac{q^{n}-1}{q-1}$, $n$ is even and $K$ is the point set of a $Q(n,q)$.
\item[(ii)]$|K|=\frac{q^{n}-1}{q-1}+q^{\frac{n-1}{2}}$, $n$ is odd and $K$ is the point set of a $Q^+(n,q)$.
\item[(iii)]$|K|=\frac{q^{n}-1}{q-1}+1$, $q$ is even, and $K=\Pi_tK'\cup\{N\}$ with $\Pi_{t}$ some $\mathrm{PG}(t,q)\subset\mathrm{PG}(n,q)$ 
and with $K' $ (the point set of) a $Q(n-t-1,q)$ in some $(n-t-1)$-dimensional subspace of $\mathrm{PG}(n,q)$ skew from $\mathrm{PG}(t,q)$ 
(hence $n-t-1$ is even) or with $K'$ a $(q+1)$-arc in plane skew from $\mathrm{PG}(t,q)$ if $t=n-3$. In each case $N$ is the nucleus of $K'$.
\end{itemize}
\end{theorem}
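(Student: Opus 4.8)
The plan is to reconstruct a polar-space structure on $K$ and then appeal to Theorem \ref{BL}. I would let $\mathcal{S}$ be the point-line geometry whose points are the points of $K$ and whose lines are the \emph{generators} of $K$, that is, the lines $\ell$ of $\mathrm{PG}(n,q)$ with $|\ell\cap K|=q+1$; two points of $K$ are collinear in $\mathcal{S}$ exactly when the line joining them is a generator. Since $K$ has type $(0,1,2,q+1)$, two distinct generators meet in at most one point, so $\mathcal{S}$ is a partial linear space, and it is by construction fully embedded in $\mathrm{PG}(n,q)$. The task is to promote $\mathcal{S}$ to a Shult space in which no point is collinear with all others, for then Theorem \ref{BL} identifies $K$ with the point set of a finite classical polar space. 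Non-singularity of $K$ provides, through every point, a secant line, and this is exactly what will keep a point from being collinear with all the others.

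The crux, and the step I expect to be the hardest, is the local structure at a point, encoded by the \emph{tangent hyperplane}. For $p\in K$ put $T_p=\{p\}\cup\{x : px \mbox{ is tangent or a generator}\}$. I would try to show that $T_p$ is a hyperplane of $\mathrm{PG}(n,q)$; the ``one or all'' axiom is then immediate, because a point $x$ on a generator $L\not\ni p$ already lies with $p$ on a line carrying two points of $K$, so $px$ cannot be tangent, and $x$ is $\mathcal{S}$-collinear with $p$ precisely when $x\in T_p$, giving $|L\cap T_p|\in\{1,q+1\}$ as required. To prove that $T_p$ is a hyperplane I would induct on $n$ by means of hyperplane sections: for a hyperplane $H$ the set $K\cap H$ is again of type $(0,1,2,q+1)$ in $H\cong\mathrm{PG}(n-1,q)$, so the inductive description controls $T_p\cap H$, and matching this information across the pencil of hyperplanes through $p$ pins down $T_p$; the base cases are the planar and three-dimensional sets, which I would treat directly. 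The real difficulty is twofold. First, tangent hyperplane sections of $K$ are cones and hence singular, so the inductive hypothesis may be applied only to the genuinely non-singular sections, the degenerate ones having to be recognized as cones by hand. Second, and more seriously, $T_p$ need not be a hyperplane at all: this failure is precisely the origin of case (iii), for in even characteristic a cone $\Pi_tK'$ carries a nucleus $N$ such that the line $vN$ is the unique secant through each vertex point $v$, whence $T_v$ equals all of $\mathrm{PG}(n,q)$ except the single point $N$ and the Shult axiom breaks down at $v$.

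I would therefore distinguish two regimes. If $T_p$ is a hyperplane for every $p\in K$, then $\mathcal{S}$ is a Shult space with no point collinear with all others, and Theorem \ref{BL} makes $K$ the point set of a classical polar space; being of type $(0,1,2,q+1)$ over $\mathrm{GF}(q)$, such a set can only be a quadric, since the symplectic space fills the whole projective space and a Hermitian variety has secants meeting it in more than two points. Comparing cardinalities, $|Q(n,q)|=\frac{q^{n}-1}{q-1}$ forces $n$ even (case (i)), $|Q^{+}(n,q)|=\frac{q^{n}-1}{q-1}+q^{(n-1)/2}$ forces $n$ odd (case (ii)), while $|Q^{-}(n,q)|=\frac{q^{n}-1}{q-1}-q^{(n-1)/2}$ is ruled out by the lower bound $|K|\ge\frac{q^{n}-1}{q-1}$; the upper bound $|K|<\frac{q^{n+1}-1}{q-1}$ is what confines us to these point sets. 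In the remaining regime some $T_p$ fails to be a hyperplane; I would show that this can occur only for $q$ even and forces $K$ to be a cone over a non-singular parabolic quadric (or over a $(q+1)$-arc, when the base lies in a plane) together with its nucleus, by isolating the anomalous point $N$, projecting the vertex away, and applying the quadric case already obtained to the non-singular base. A direct count then yields $|K|=\frac{q^{n}-1}{q-1}+1$, which is exactly case (iii), completing the classification.
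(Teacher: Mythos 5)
First, a point of order: the paper contains no proof of this statement. Theorem~\ref{quadric} is Tallini's classical characterization, quoted as a known result from \cite{Tall1} and \cite{Tall2} (see also \cite{HT}), and it is then used as a black box in the proofs of the hyperbolic and parabolic cases. So there is no ``paper's own proof'' to compare your attempt against; it can only be judged on its own merits.

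On those merits, your text is a strategy outline rather than a proof, and the parts you defer are exactly where the entire content of the theorem lies. The central claim --- that for every $p\in K$ the tangent set $T_p$ is a hyperplane, except in a controlled degeneration that produces case (iii) --- \emph{is} Tallini's theorem in disguise; assuming it, the rest (the one-or-all axiom, the appeal to Theorem~\ref{BL}, the cardinality comparison) is routine. Your proposed induction on $n$ does not close as described: the inductive hypothesis is the theorem itself, whose hypotheses include both non-singularity and the two-sided bound $\frac{q^{n}-1}{q-1}\le |K| < \frac{q^{n+1}-1}{q-1}$, and you verify neither for the sections $K\cap H$. Tangent sections are singular cones (you acknowledge this but only promise to ``recognize them by hand''), and no counting argument is offered to show that the non-singular sections land in the required size window, so the induction hypothesis cannot legitimately be invoked. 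A second, independent gap concerns Theorem~\ref{BL} itself: in this paper a Shult space is a \emph{thick partial linear space of order $(s,t)$} that is \emph{fully embedded}, so you must show that every point of $K$ lies on at least one $(q+1)$-secant (otherwise the point set of $\mathcal{S}$ is not the union of its lines), indeed on more than two of them, and that this number is constant over $K$. None of this is addressed; note that the paper, when it runs this very argument in its elliptic case, pauses explicitly to prove the constancy of the number of lines through a point, so the omission is not cosmetic. Finally, your second regime (some $T_p$ not a hyperplane forces $q$ even and the cone-plus-nucleus structure) is asserted with only the phrase ``isolating the anomalous point and projecting,'' which is again a statement of the difficulty, not its resolution. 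In short: the route via Shult spaces and Buekenhout--Lef\`evre is a plausible modern alternative to Tallini's original direct argument, but as written the proposal identifies the hard steps without carrying any of them out.
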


These two theorems provide characterizations of certain polar spaces by their line intersections. It is a natural question to ask whether
polar spaces can also be characterized by their intersections with respect to other subspaces. A first result in this direction was given by the
following theorem of Ferri-Tallini \cite{FT}.
\begin{theorem}\label{Tallini}
If a set $K$ of points in $\mathrm{PG}(n,q),\:n\geq 4$, with $\left|K\right|\geq q^3+q^2+q+1$, is such that it has as intersection numbers with planes $1,\:a,\:b$, where $a\leq b$ and $b\geq 2q+1$, and as intersection numbers with solids $c,\:c+q,\:c+2q$, and such that solids intersecting $K$ in $c$ and $c+q$ points exist, then $K$ is the point set of a $Q(4,q)$.    
\end{theorem}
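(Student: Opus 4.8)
The plan is to reduce the statement to Theorem~\ref{quadric}: I will show that $K$ is a non-singular point set of type $(0,1,2,q+1)$ spanning a four-dimensional subspace $\Pi\cong\mathrm{PG}(4,q)$, with $|K|=q^3+q^2+q+1=\frac{q^4-1}{q-1}$. For such a set Theorem~\ref{quadric} applied inside $\Pi$ finishes the job: among its three alternatives only case~(i) is compatible with $n=4$ (even) and $|K|=\frac{q^4-1}{q-1}$ (case~(ii) needs $n$ odd, case~(iii) needs size $\frac{q^4-1}{q-1}+1$), so $K$ is a $Q(4,q)$. To orient the argument I first record the behaviour of the model $Q(4,q)$: its plane sections are points, conics, lines and line-pairs, of sizes $1,\,q+1,\,q+1,\,2q+1$, while its solid sections are elliptic quadrics $Q^-(3,q)$, tangent cones $pQ(2,q)$ and hyperbolic quadrics $Q^+(3,q)$, of sizes $q^2+1,\,q^2+q+1,\,q^2+2q+1$. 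Thus the targets are $a=q+1$, $b=2q+1$, $c=q^2+1$, and the guaranteed occurrence of solids of sizes $c$ and $c+q$ should correspond to elliptic sections and tangent cones.

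The engine of the proof is standard incidence counting. Writing $s_0,s_1,s_2$ for the numbers of solids meeting $K$ in $c,\,c+q,\,c+2q$ points, one has three ``fundamental equations'': the total number of solids; the count of (point of $K$, solid) incidences, which weights $s_i$ by the section size $m_i$ and equals $|K|$ times the number of solids through a point; and the count of (pair of points of $K$, solid) incidences, which weights $s_i$ by $\frac{m_i(m_i-1)}{2}$ and equals $\frac{|K|(|K|-1)}{2}$ times the number of solids through a line. Because the three solid-sizes form an arithmetic progression with common difference $q$, these equations are extremely rigid; combined with $s_0,s_1>0$ and integrality, they should force $c=q^2+1$ and $|K|=q^3+q^2+q+1$. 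The analogous equations at the level of planes (available once the line-type is known) then pin down $a=q+1$ and $b=2q+1$.

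The heart of the argument, and the step I expect to be the main obstacle, is determining the line-type and thereby identifying $a$ and $b$ before the counting can be closed. Here I would work inside plane sections: a section $S=\pi\cap K$ of size $t\in\{a,b\}$ satisfies the two elementary identities in $\mathrm{PG}(2,q)$, namely $\sum_{\ell\subset\pi}|\ell\cap S|=(q+1)t$ and $\sum_{\ell\subset\pi}\frac{|\ell\cap S|(|\ell\cap S|-1)}{2}=\frac{t(t-1)}{2}$. Using $b\ge 2q+1$, and feeding back the constraint that every solid through $\pi$ has size in $\{c,c+q,c+2q\}$, one should prove that a line can only meet $K$ in $0,1,2$ or $q+1$ points: a large section is forced to be a line-pair (whence $b=2q+1$ and the value $q+1$ appears on lines), while the remaining non-tangent sections are conics or single lines (whence $a=q+1$). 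The delicate point is excluding exotic configurations \emph{a priori}, that is, ruling out intermediate line-sizes $3\le x\le q$ and plane-sizes $b>2q+1$; this is exactly where the plane identities must be played off against the solid constraints, and it is the genuinely technical core.

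It then remains to secure non-singularity and the dimension of the span. Non-singularity I would deduce from the solid data: a singular point $p$ would make $K$ a cone with vertex $p$, forcing all its solid sections to be singular cones and thereby excluding the non-singular (size-$c$) sections that the hypothesis guarantees. For the span, $|K|=q^3+q^2+q+1>q^2+2q+1$ already prevents $K$ from lying in a single solid, and a final counting/section argument rules out $\dim\langle K\rangle\ge 5$, so that $\langle K\rangle\cong\mathrm{PG}(4,q)$. With $K$ now a non-singular set of type $(0,1,2,q+1)$ of size $\frac{q^4-1}{q-1}$ in $\mathrm{PG}(4,q)$, Theorem~\ref{quadric}(i) applies and gives that $K$ is the point set of a $Q(4,q)$, as required. (For the small field $q=2$ the plane identities degenerate and this case would be treated by a direct, separate analysis.)
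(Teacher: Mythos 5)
First, a point of order: the paper itself does not prove Theorem \ref{Tallini} at all --- it is quoted from Ferri and Tallini \cite{FT} as motivation for the main theorem --- so there is no internal proof to compare your attempt against, and it must be judged on its own merits. Judged that way, what you have written is a strategy outline rather than a proof, and the load-bearing steps are precisely the ones deferred. Your reduction target is sound: if $K$ is shown to be a non-singular set of type $(0,1,2,q+1)$ of size $\frac{q^4-1}{q-1}$ spanning a four-dimensional subspace, then Theorem \ref{quadric}(i) does force $K$ to be a $Q(4,q)$ when $q>2$. But every hypothesis of that reduction is exactly what has to be established from the given plane and solid data, and at each such point you write ``should force'', ``one should prove'', or appeal to ``a final counting/section argument''; you yourself label the exclusion of line sizes $3\le x\le q$ and of plane sizes $b>2q+1$ as ``the genuinely technical core'' and then do not supply it. Note also that the three solid-counting equations involve the unknowns $s_0,s_1,s_2,c,|K|$ (and $n$) simultaneously, so they are underdetermined on their own and cannot ``force'' $c=q^2+1$ and $|K|=q^3+q^2+q+1$; the rigidity only emerges when they are played off against the plane numbers, which is exactly the missing part of the argument.

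Two further concrete defects. The non-singularity step is wrong as sketched: if $p$ is a singular point then $K$ is indeed a cone with vertex $p$, but only the solid sections \emph{through} $p$ are then cones; a solid missing $p$ meets $K$ in a bijective projection of the base of the cone, which can perfectly well be non-singular, so ``all its solid sections are singular cones'' does not follow --- and in any case the hypothesis only guarantees solids of \emph{sizes} $c$ and $c+q$, with no a priori geometric meaning attached to those sizes, so they cannot be invoked as ``the non-singular sections''. Finally, Theorem \ref{Tallini} carries no restriction on $q$, while Theorem \ref{quadric} needs $q>2$; relegating $q=2$ to ``a direct, separate analysis'' that is never given leaves that case entirely unproven. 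So the proposal identifies a plausible route (reduce to Tallini's line-type characterization) but proves none of the statements that the route requires.
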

This theorem is a clear motivation to look at the following questions:
\begin{quote}
Is it possible to characterize finite classical polar spaces by their intersection numbers with respect to planes and solids, respectively by their intersections with
respect to hyperplanes and subspaces of codimension $2$? 
\end{quote}
(Note that these questions of course do not make any sense for the polar space $W_{2n+1}(q)$, as this polar space comprises all points of its ambient projective space.)
The first question was answered affirmitively in Schillewaert \cite{Schille}, and Schillewaert and Thas \cite{JSJAT}; the second question will be the subject of this paper. Though it is
possible to characterize certain polar spaces only by their line intersections, the existence (in abundance) of quasi-quadrics and quasi-Hermitian
varieties shows that it is not possible to characterize them merely by their intersections with respect to hyperplanes. Here we define a quasi-quadric, respectively quasi-Hermitian variety,
 to be a subset of the set of points of a projective space having the same intersection numbers with respect to hyperplanes as a non-singular quadric, respectively a non-singular Hermitian variety. In the parabolic case this is a slight deviation of the standard definition as given in \cite{ FDC}.
 The concept of a quasi-Hermitian variety in fact does not appear in the literature, but in unpublished work the second author showed that examples can be constructed with the same techniques as used to construct quasi-quadrics. For an overview on quasi-quadrics we refer to \cite{FDC}.

Hence the following Segre-type characterization of polar spaces, which is the main theorem of this paper, also provides necessary and sufficient conditions for a
quasi-quadric or a quasi-Hermitian variety to be a non-singular quadric or Hermitian variety.

\begin{theorem}\label{Main}
If a point set $K$ in $\mathrm{PG}(n,q)$, $n\geq4$, $q>2$, has the same intersection numbers with respect to hyperplanes and subspaces of codimension $2$
as a polar space $P\in\{H(n,q), Q^+(n,q), Q^-(n,q),Q(n,q)\}$, then $K$ is the point set of a non-singular polar space $P$.  
\end{theorem}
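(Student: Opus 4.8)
The plan is to characterize $K$ by reducing the high-dimensional intersection data to the line-intersection characterizations (Theorems \ref{Hermitian} and \ref{quadric}) already available. The strategy proceeds by induction on the dimension $n$, using the fact that the hypothesis on $K$ is essentially inherited by suitable subspaces. First I would compute, for the genuine polar space $P$, the exact list of intersection numbers with hyperplanes and with codimension-$2$ spaces; these numbers are classical and depend only on the type of $P$. Since $K$ is assumed to share these numbers, a counting argument (summing intersection sizes over all hyperplanes, or over all codimension-$2$ spaces through a fixed lower-dimensional flat, via standard double counting in $\mathrm{PG}(n,q)$) will pin down $|K|$ exactly and force the number of hyperplanes of each intersection type to match those of $P$.

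The heart of the argument is to descend to lines. The key step is to show that $K$ has only a restricted list of line-intersection numbers, namely the type $(1,r,q^2+1)$ in the Hermitian case or $(0,1,2,q+1)$ in the quadric cases, so that Theorems \ref{Hermitian} and \ref{quadric} become applicable. To get control of line sections I would intersect $K$ with planes: a plane is the intersection of $n-2$ hyperplanes, and more usefully sits inside many codimension-$2$ spaces and hyperplanes whose $K$-sections we already understand. By intersecting $K$ with a hyperplane $H$, the set $K\cap H$ should itself have the intersection numbers (with respect to hyperplanes and codimension-$2$ spaces \emph{of $H$}) of a polar space in $\mathrm{PG}(n-1,q)$ — possibly a cone, reflecting the singular sections that occur for tangent hyperplanes. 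This is where the cone notation from the Remark becomes essential: tangent hyperplanes to a non-singular polar space meet it in a cone $pP'$ over a lower non-singular polar space $P'$, and I would need to verify that the assumed intersection data forces exactly this behaviour, separating tangent from non-tangent hyperplanes by their $K$-section sizes.

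Concretely I would set up an induction whose inductive step says: if the statement holds in dimension $n-1$, then each hyperplane section $K\cap H$ is recognized (as a non-singular polar space, or as a cone over one), and then the way these sections fit together across all hyperplanes through a common codimension-$2$ space reconstructs the line-type of $K$ globally. Once the line-type is established and $|K|$ is known to equal the cardinality of the target $P$, I would invoke Theorem \ref{Hermitian} (checking $3\le r\le q^2-1$ and the no-bad-plane condition) or Theorem \ref{quadric} (checking the cardinality bounds that select cases (i) or (ii) and exclude the singular case (iii)) to conclude that $K$ is the point set of the corresponding non-singular polar space. The base case of the induction, at the smallest relevant dimension, would be handled directly by these line-characterization theorems together with the explicit intersection-number bookkeeping.

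The main obstacle I anticipate is the reduction from the given (hyperplane and codimension-$2$) intersection data down to line sections: nothing in the hypothesis speaks about lines directly, so one must leverage the interplay between hyperplane sections and codimension-$2$ sections to squeeze out that every line meets $K$ in one of the permitted few values, and in particular to rule out spurious line-intersection numbers that would be consistent with the coarse data but not with a genuine polar space. Managing the tangent/non-tangent dichotomy uniformly across the five (really four, excluding $W_{2n+1}(q)$) families — each with its own intersection-number profile and its own parity constraints on $n$ — while ensuring the singular alternative in Theorem \ref{quadric}(iii) is excluded by the cardinality count, is the delicate part that the counting and the inductive recognition of sections must together resolve.
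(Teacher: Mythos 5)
There are two genuine gaps in your plan. The first is that your induction does not close. To recognize a hyperplane section $K\cap H$ by the inductive hypothesis you would need the intersection numbers of $K\cap H$ with hyperplanes of $H$ \emph{and} with codimension-$2$ subspaces of $H$. The former are codimension-$2$ subspaces of $\mathrm{PG}(n,q)$, hence controlled by hypothesis; but the latter are codimension-$3$ subspaces of $\mathrm{PG}(n,q)$, about which the hypothesis says nothing. So the data is not ``essentially inherited'', and in addition the sections by tangent hyperplanes are cones, which your inductive statement (about non-singular polar spaces) does not cover and for which no characterization theorem is quoted. The paper circumvents exactly this obstruction by dualizing rather than inducting: it counts the number $T_i$ of tangent hyperplanes through a codimension-$2$ space of each type, observes that the set of tangent hyperplanes, viewed as a point set $K'$ of the dual space $\mathrm{PG}(n,q)^D$, therefore has the $T_i$ as its \emph{line} intersection numbers (lines of the dual space are precisely the codimension-$2$ spaces of the original), applies the line-characterization theorems to $K'$ in the dual space, and then transports the conclusion back to $K$ via the bijection (proved by a variance argument) between tangent hyperplanes of $K'$ and points of $K$. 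Direct line control on $K$ itself, which your plan needs, is achieved in the paper only in the parabolic case, and there it costs several delicate lemmas about codimension-$3$ spaces inside hyperplanes of type $H_1$ (resting on the coincidences $m_3^1=m_2^2=0$ and $N_E=2-N_H$) together with the already-proved hyperbolic case; it is not a routine inheritance argument.

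The second gap is your endgame in the elliptic case. Even if you established that $K$ is a non-singular $(0,1,2,q+1)$-set, Theorem \ref{quadric} cannot be invoked: it requires $|K|\geq \frac{q^{2n+1}-1}{q-1}$ in $\mathrm{PG}(2n+1,q)$, whereas $|Q^-(2n+1,q)|=\frac{(q^{n+1}+1)(q^n-1)}{q-1}$ lies strictly below this bound, and indeed elliptic quadrics do not occur among the conclusions (i)--(iii) of that theorem at all. The paper acknowledges this explicitly (``$K'$ does not satisfy the conditions of Theorem \ref{quadric}'') and handles the elliptic case by a different mechanism: it defines a point-line geometry on the dual set whose lines are the $(q+1)$-secants, proves it is a Shult space with no point collinear with all others, and invokes the Buekenhout--Lef\`evre theorem (Theorem \ref{BL}) to recognize $Q^-(2n+1,q)$. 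Your proposal has no counterpart for this step, so even granting the (unjustified) line-type reduction, the elliptic case would remain unproved.
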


{\bf Remark}
For $n=3$ the conclusion of the above theorem remains true in the Hermitian and hyperbolic case. This is easily seen using Theorem \ref{BL}. In the elliptic case the conclusion only remains true if $q$ is odd, as every non-classical ovoid provides a counter example. 
If $n=4$ the conclusion of the theorem is true for all $q$. In the parabolic case this is a corollary of Theorem \ref{Tallini}, see Schillewaert \cite{Schille}, and in the Hermitian case this result was obtained in Schillewaert and Thas \cite{JSJAT}. 
\medskip

In the next section we will handle the four different types of polar spaces one by one. The basic idea is to study certain structures in the dual 
projective space. However the elliptic and parabolic case will turn out to be harder than the other two cases, and especially the parabolic case 
will be more complex and interesting (this is basically due to the fact that there are more intersections with respect to hyperplanes).

\section{Proof of the Theorem \ref{Main}}
From here on we always assume that we work in a projective space of dimension at least four and that $q>2$.
\subsection{Hermitian varieties}
Let us first recall the intersections of an $H(n,q^2)$ in $\mathrm{PG}(n,q^2)$ with hyperplanes and subspaces of codimension 2. 
A hyperplane intersects an $H(n,q^2)$ either in an $H(n-1,q^2)$ or in a cone $pH(n-2,q^2)$. A subspace of codimension 2 intersects an $H(n,q^2)$ either in an $H(n-2,q^2)$, in a cone $pH(n-3,q^2)$ or in a cone $LH(n-4,q^2)$. Hence the intersection numbers with hyperplanes are 

\[H_1={\frac { \left( {q}^{n}-(-1)^{n} \right)  \left( {q}^{n-1}+(-1)^{n} \right) }{{q}^{2
}-1}},\:H_2=1+{\frac {{q}^{2} \left( {q}^{n-1}+(-1)^{n} \right)  \left( {q}^{n-2}-(-1)^{n}
 \right) }{{q}^{2}-1}}.\]

and the intersection numbers with subspaces of codimension 2 are 

\[C_1={\frac { \left( {q}^{n-1}+(-1)^n \right)  \left( {q}^{n-2}-(-1)^n \right) }{{q}^{2}-1}},\:C_2=1+{\frac {{q}^{2} \left( {q}^{n-2}-(-1)^n \right)  \left( {q}^{n-3}+(-1)^n \right) }{{q}^{2}-1}},\]
\[C_3=1+{q}^{2}+{\frac {{q}^{4} \left( {q}^{n-3}+(-1)^n \right)  \left( {q}^{n-4} -(-1)^n \right) }{{q}^{2}-1}}.\]

From now on, let $K$ be a point set in $\mathrm{PG}(n,q^2)$ having the above intersection numbers with respect to hyperplanes and subspaces of 
codimension $2$. We want to prove that $K$ is the point set of an $H(n,q^2)$. We will call subspaces intersecting $K$ in a given
number $m$ of points, {\em subspaces of type $m$}. For obvious reasons a hyperplane intersecting
$K$ in $H_{2}$ points will also be called a {\em tangent hyperplane}.

\begin{lemma}
The set $K$ contains $|H(n,q^2)|$ points. There are $|H(n,q^2)|$ tangent hyperplanes. 
\end{lemma}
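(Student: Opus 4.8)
The plan is to read off both quantities from the hyperplane data alone, which is enough because the size of $K$ and the number of tangent hyperplanes are determined by the distribution of the hyperplane intersection numbers. Write $\theta_k=\frac{q^{2(k+1)}-1}{q^2-1}$ for the number of points of $\mathrm{PG}(k,q^2)$, so that $\mathrm{PG}(n,q^2)$ has $\theta_n$ hyperplanes, $\theta_{n-1}$ hyperplanes through a fixed point, and $\theta_{n-2}$ hyperplanes through a fixed pair of points. Let $t_1$ and $t_2$ be the number of hyperplanes of type $H_1$ and of type $H_2$ (the tangent hyperplanes). First I would set up the three standard counting equations, obtained by counting hyperplanes, incident point–hyperplane pairs, and incident (ordered point-pair)–hyperplane triples:
\[
t_1+t_2=\theta_n,\qquad H_1t_1+H_2t_2=|K|\,\theta_{n-1},\qquad H_1(H_1-1)t_1+H_2(H_2-1)t_2=|K|(|K|-1)\,\theta_{n-2}.
\]
Equivalently, since every hyperplane $\pi$ meets $K$ in exactly $H_1$ or $H_2$ points, one has $\sum_\pi(|K\cap\pi|-H_1)(|K\cap\pi|-H_2)=0$, and expanding this with the power-sum identities above yields the same information directly.

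Eliminating $t_1$ and $t_2$ leaves a single quadratic equation in $x:=|K|$, namely
\[
\theta_{n-2}\,x^2-\left(\theta_{n-2}+(H_1+H_2-1)\,\theta_{n-1}\right)x+H_1H_2\,\theta_n=0 .
\]
Because a genuine $H(n,q^2)$ realises precisely the intersection numbers $H_1,H_2$, its cardinality satisfies this identical equation; hence $x=|H(n,q^2)|$ is a root, a fact one can also confirm by substituting the closed forms for $H_1$, $H_2$ and the $\theta_k$. Once $x$ is known, the first two linear equations force $t_2=(x\,\theta_{n-1}-H_1\theta_n)/(H_2-H_1)$, and evaluating this at $x=|H(n,q^2)|$ returns $t_2=|H(n,q^2)|$, which is the second assertion.

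The remaining, and main, difficulty is to exclude the second root of the quadratic. Both roots turn out to be positive and of the same order of magnitude, so neither a sign argument nor a crude comparison with $\theta_n$ (the total number of points) rules the spurious one out. I would instead use integrality: the sum of the two roots equals $1+(H_1+H_2-1)\,\theta_{n-1}/\theta_{n-2}$, and I expect to verify, from the explicit expressions, that $\theta_{n-2}$ does not divide $(H_1+H_2-1)\,\theta_{n-1}$, so that this sum is not an integer. Since $|H(n,q^2)|$ is an integer root, the other root is then non-integral and cannot be the cardinality of a point set. Carrying out this non-divisibility check over all $n\geq4$ and $q>2$ is the one genuinely computational point and is where I would concentrate the care; as a fallback, one can instead invoke the codimension-$2$ intersection numbers $C_1,C_2,C_3$ to produce a second, independent relation in $|K|$ whose only shared solution with the quadratic is $|H(n,q^2)|$.

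Once uniqueness of the admissible root is established, $|K|=|H(n,q^2)|$ follows, and substituting this value into $t_2=(|K|\,\theta_{n-1}-H_1\theta_n)/(H_2-H_1)$ yields the stated number $|H(n,q^2)|$ of tangent hyperplanes, completing both parts of the lemma.
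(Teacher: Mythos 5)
Your proposal is correct, and its skeleton coincides with the paper's: the same incidence counts, the same quadratic $\theta_{n-2}x^2-\left[\theta_{n-2}+(H_1+H_2-1)\theta_{n-1}\right]x+H_1H_2\theta_n=0$ in $x=|K|$, and the same way of extracting the tangent-hyperplane count once $|K|$ is pinned down. The genuine divergence is in how the spurious root $x_2$ is excluded. The paper stays with counting: assuming $|K|=x_2$, it takes a codimension-$2$ space of type $C_i$ and solves $k_i(H_1-C_i)+(q^2+1-k_i)(H_2-C_i)+C_i=x_2$ for the number $k_i$ of non-tangent hyperplanes through it; for $i=1,2,3$ the resulting closed forms fail to be natural numbers when $n>2$, so no codimension-$2$ space of any type could exist, a contradiction. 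You instead argue that $x_2$ is not even an integer, because the sum of the two roots, $1+(H_1+H_2-1)\theta_{n-1}/\theta_{n-2}$, is not an integer. Your route buys something real: it uses only the hyperplane data, so it shows that the hyperplane intersection numbers alone already force $|K|=|H(n,q^2)|$, whereas the paper's exclusion genuinely invokes the codimension-$2$ hypothesis. The paper's route, in exchange, needs no divisibility analysis, only inspection of three closed-form expressions.

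The one step you flagged as unverified does check out, so there is no gap. Writing $\epsilon=(-1)^n$, one computes $H_1+H_2-1=(q^{n-1}+\epsilon)\bigl(2q^n-\epsilon(q^2+1)\bigr)/(q^2-1)$, so that the sum of the roots minus $1$ equals $(q^{2n}-1)\bigl(2q^n-\epsilon(q^2+1)\bigr)/\bigl[(q^2-1)(q^{n-1}-\epsilon)\bigr]$. Reducing the numerator modulo $q^{n-1}-\epsilon$ (where $q^n\equiv\epsilon q$ and $q^{2n}\equiv q^2$) leaves $-\epsilon(q-1)^3(q+1)$, so integrality would force $q^{n-1}-\epsilon$ to divide $(q-1)^3(q+1)$. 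This never happens for $n\geq4$: for $n\geq5$ because $0<(q-1)^3(q+1)<q^{n-1}-1$, and for $n=4$ because division of $(q-1)^3(q+1)$ by $q^3-1$ leaves remainder $3(q-1)\neq0$. Hence the sum of the roots is non-integral and $x_2$ cannot be a cardinality, exactly as you planned. (Your stated fallback, re-using the codimension-$2$ intersection numbers, is precisely the paper's argument, so you had a correct safety net in any case.)
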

\begin{proof}
We count in two ways the pairs $(p,\alpha)$ where $p$ is a point of $K$ and $\alpha$ a hyperplane such that $p\in\alpha$, respectively the triples 
$(p,r,\alpha)$ where $p\neq r$ are points of $K$ and $\alpha$ a hyperplane such that $p,r\in\alpha$. Denote by $h_{1}$ the number of hyperplanes 
intersecting $K$ in $H_1$ points and by $x$ the size of $K$. We obtain:

\begin{equation}\label{Herm1}
x\frac{q^{2n}-1}{q^2-1}=h_{1}H_{1}+(\frac{q^{2n+2}-1}{q^2-1}-h_{1})H_{2},
\end{equation}

and

\begin{equation}\label{Herm2}
x(x-1)\frac{q^{2n-2}-1}{q^2-1}=h_{1}H_{1}(H_{1}-1)+(\frac{q^{2n+2}-1}{q^2-1}-h_{1})H_{2}(H_{2}-1).
\end{equation}

Solving the first equation for $h_{1}$ and substituting this value in the second equation yields a quadratic equation in $x$. The solutions are 
$x_1=|H(n,q^2)|$ and 
$x_2$, which is a tedious expression in $q$, however easily computed with any computer algebra package.
 
We show the latter is impossible. So suppose by way of contradiction that the set $K$ contains $x_2$ points. Consider a subspace $\Pi$ of 
codimension $2$ intersecting the set $K$ in $C_i$ points. Denote by $k_{i}$ the number of non-tangent hyperplanes containing $\Pi$. We obtain 
the following equation.
\[k_{i}(H_1-C_i)+(q^2+1-k_{i})(H_2-C_i)+C_i=x_2.\]
Solving this equation in $k_{i}$ for $i=1,2,3$ yields respectively 
\[k_{1}={\frac {{q}^{n}-(-1)^n({q}^{2}-q+1)}{{q}^{n-1}-(1)^n}} ; \mathrm{\ } k_{2}={\frac {2\,{q}^{n}-(-1)^n({q}^{2}+1)}{{q}^{n-1}-(-1)^n}};\]
\[ \mathrm{\ }k_{3}={-\frac {{q}^{n+1}-2\,{q}^{n}+(-1)^n}{{q}^{n-1}-(-1)^n}}.\] 
These are not natural numbers if $n>2$, proving $x_{2}$ cannot occur. 

The second assertion follows by substituting $x=\left| H(n,q^2)\right|$ in Equation \ref{Herm1}.\qed
\end{proof}

{\bf Remark.}
Notice that for $n=2$, we would obtain natural numbers and in that case we have $x_{2}=q^2+q+1$, that is, exactly the number of points of a Baer subplane, which was to be expected.

\begin{lemma}\label{linesizeHerm}
Through a space of codimension 2 of type $C_1,C_2,C_3$ there pass respectively $T_1=q+1,\:T_2=1,\:T_3=q^2+1$ tangent hyperplanes. 
\end{lemma}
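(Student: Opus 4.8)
The plan is to exploit the pencil of hyperplanes through a fixed subspace of codimension $2$ and to feed in the point count $|K|=|H(n,q^2)|$ established in the previous lemma. Fix a subspace $\Pi$ of codimension $2$ that is of type $C_i$. The hyperplanes of $\mathrm{PG}(n,q^2)$ containing $\Pi$ correspond to the points of the quotient line $\mathrm{PG}(1,q^2)$, so there are exactly $q^2+1$ of them, and every point of $\mathrm{PG}(n,q^2)$ outside $\Pi$ lies in precisely one of these hyperplanes while every point of $\Pi$ lies in all $q^2+1$ of them. Crucially, each such hyperplane is a hyperplane of the whole space and hence has one of the only two possible hyperplane types, namely $H_1$ or $H_2$; denote by $T_i$ the number of tangent hyperplanes (type $H_2$) through $\Pi$, so that the remaining $q^2+1-T_i$ are of type $H_1$.

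Next I would count the points of $K$ by summing $|K\cap\alpha|$ over the $q^2+1$ hyperplanes $\alpha\supset\Pi$. Since a point of $K\cap\Pi$ is counted $q^2+1$ times and every other point of $K$ exactly once, this sum equals $(q^2+1)C_i+(|K|-C_i)=q^2C_i+|K|$. On the other hand the sum is $T_iH_2+(q^2+1-T_i)H_1$. Equating the two expressions gives the single linear relation $T_i(H_2-H_1)=q^2C_i+|K|-(q^2+1)H_1$, whence $T_i=\bigl(q^2C_i+|K|-(q^2+1)H_1\bigr)/(H_2-H_1)$. Here $H_2\neq H_1$ (their difference, up to sign, is a nonzero power-of-$q$ multiple coming from the explicit formulas), so $T_i$ is well defined.

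It then remains to substitute $|K|=|H(n,q^2)|$ from the preceding lemma together with the three explicit values $C_1,C_2,C_3$ and the values $H_1,H_2$ recorded at the start of this subsection, and to check that the formula returns $T_1=q+1$, $T_2=1$ and $T_3=q^2+1$ respectively. This final verification is the only real work, and it is entirely routine algebra in $q$ (with the sign $(-1)^n$ cancelling cleanly), easily confirmed by hand or with a computer algebra package. I expect no conceptual obstacle: the argument is a clean double count in the pencil of hyperplanes through $\Pi$, and the fact that a codimension-$2$ space sees only the two hyperplane types $H_1,H_2$ is exactly what collapses the count to a single solvable equation for $T_i$.
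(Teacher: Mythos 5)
Your proposal is correct and is essentially the paper's own proof: the paper writes the same pencil-of-hyperplanes double count as $C_i+T_i(H_2-C_i)+(q^2+1-T_i)(H_1-C_i)=|K|$, which is algebraically identical to your equation $T_iH_2+(q^2+1-T_i)H_1=q^2C_i+|K|$, and then solves for $T_i$ using the known value $|K|=|H(n,q^2)|$. The only difference is presentational (you sum $|K\cap\alpha|$ over the pencil rather than partition $K$ by the hyperplanes), so there is nothing to add.
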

\begin{proof}
Let $\Pi$ be a codimension $2$-space intersecting $K$ in $C_i$ points and let $T_i$ denote the number of tangent hyperplanes containing $\Pi$. We obtain the following equation:
$$C_i+T_i(H_2-C_i)+(q^2+1-T_i)(H_1-C_i)=|K|.$$
Solving the equation in $T_i$ for $i=1,2,3$ yields the result. 
\qed
\end{proof}

\begin{lemma}\label{non-singular}
Each tangent hyperplane contains $A_i$ subspaces of codimension $2$ intersecting $K$ in $C_{i}$ points, where 
\[A_{1}=q^{2n-2}, \mathrm{\ } A_{2}={\frac {q^{n-2} \left( q^{n-1}+(-1)^n \right) }{q+1}},\]
\[A_{3}={\frac {(q^{n-1}+(-1)^n)(q^{n-2}-(-1)^n)}{q^2-1}}.\].
\end{lemma}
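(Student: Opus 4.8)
The plan is to fix a tangent hyperplane $\alpha$ and count, by a double-counting argument similar to the one already used, the number $A_i$ of codimension-$2$ subspaces $\Pi\subset\alpha$ that meet $K$ in exactly $C_i$ points. I would set up a system of equations in the three unknowns $A_1,A_2,A_3$, using the structure of the pencil of hyperplanes through each such $\Pi$. The key tool is Lemma \ref{linesizeHerm}: through a codimension-$2$ space of type $C_i$ there pass exactly $T_i$ tangent hyperplanes, with $T_1=q+1$, $T_2=1$, $T_3=q^2+1$. This lets me relate incidences of codimension-$2$ spaces with tangent hyperplanes in a controlled way.

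First I would record the trivial equation coming from the total number of codimension-$2$ subspaces contained in a fixed hyperplane $\alpha$, namely $A_1+A_2+A_3=\frac{q^{2n}-1}{q^2-1}$, since every codimension-$2$ space inside $\alpha$ has some type $C_i$. Second, I would count in two ways the flags $(p,\Pi)$ with $p\in K\cap\alpha$ and $\Pi$ a codimension-$2$ subspace of $\alpha$ through $p$: the right-hand side is $|K\cap\alpha|\cdot\frac{q^{2n-2}-1}{q^2-1}$ (the number of codimension-$2$ spaces of $\alpha$ through a fixed point), while the left-hand side is $A_1C_1+A_2C_2+A_3C_3$, using that $\alpha\cap K$ has $H_2$ points. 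This yields the second linear relation. For the third relation I would instead count incidences between the codimension-$2$ subspaces of $\alpha$ and the \emph{tangent} hyperplanes of $\mathrm{PG}(n,q^2)$ other than $\alpha$ through them, or equivalently count pairs $(\Pi,\beta)$ with $\Pi\subset\alpha$ a codimension-$2$ space and $\beta\neq\alpha$ a tangent hyperplane containing $\Pi$; invoking Lemma \ref{linesizeHerm} the contribution of a type-$C_i$ space is $T_i-1$, so the total is $A_1(T_1-1)+A_2(T_2-1)+A_3(T_3-1)=\sum_i A_i(T_i-1)$, which can be matched against a count over tangent hyperplanes $\beta$ of how many codimension-$2$ spaces they share with $\alpha$.

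Solving this $3\times 3$ linear system should return the claimed closed-form values of $A_1,A_2,A_3$. I expect the main obstacle to be obtaining the third, genuinely independent equation: the first two relations are local counts inside $\alpha$, and by themselves they only pin down two of the three combinations of $A_1,A_2,A_3$. The honest work is in finding a count that distinguishes the three types, and the tangent-hyperplane multiplicities $T_i$ from Lemma \ref{linesizeHerm} are exactly the feature that does this, since the three values $q+1,1,q^2+1$ are distinct. One must take care that the global tangent-hyperplane count used on the other side of the third equation is consistent with the fact, established in the first lemma, that there are exactly $|H(n,q^2)|$ tangent hyperplanes in total, and that $\alpha$ itself is one of them.

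Once the system is assembled the remainder is a routine, if tedious, elimination best left to a computer algebra package, as the authors do elsewhere; the verification that the resulting expressions are the stated $A_i$ and in particular are nonnegative integers is then immediate. The name \emph{non-singular} for the lemma suggests that the real payoff is a corollary: these intersection counts force the tangent hyperplanes to meet $K$ in a cone structure with a well-defined vertex, so that no point of $K$ behaves like a singular point, but for the statement itself only the enumeration above is needed.
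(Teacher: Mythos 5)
Your first two equations coincide with the paper's, but your third equation --- the step you yourself single out as the ``honest work'' --- is not independent of them, so your $3\times 3$ system is singular and cannot produce the $A_i$. The count you propose is correct as an identity: each tangent hyperplane $\beta\neq\alpha$ meets $\alpha$ in exactly one codimension-$2$ space, so counting pairs $(\Pi,\beta)$ gives
\[\sum_i A_i(T_i-1)=|H(n,q^2)|-1.\]
The trouble is that $T_i$ is an \emph{affine} function of $C_i$: the counting identity in the proof of Lemma \ref{linesizeHerm},
\[C_i+T_i(H_2-C_i)+(q^2+1-T_i)(H_1-C_i)=|K|,\]
solves to
\[T_i=\frac{|K|-(q^2+1)H_1+q^2C_i}{H_2-H_1},\]
whose coefficients do not depend on $i$. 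Writing $T_i-1=a+bC_i$ with $a,b$ independent of $i$, your third left-hand side is $a\sum_i A_i+b\sum_i A_iC_i$, a linear combination of the first two left-hand sides; the right-hand sides then agree automatically, since all three identities are satisfied by the true values of the $A_i$. Hence your system has rank $2$ and a one-parameter family of solutions. Your heuristic that the distinctness of $T_1,T_2,T_3$ ``is exactly the feature that distinguishes the types'' is precisely where the argument fails: distinctness is irrelevant; what matters is whether the coefficient vector $(T_1-1,T_2-1,T_3-1)=(q,0,q^2)$ lies outside the span of $(1,1,1)$ and $(C_1,C_2,C_3)$, and it does not. Concretely, $C_2-C_1=(-1)^nq^{n-2}$ and $C_3-C_2=(-1)^{n+1}q^{n-1}$, so
\[q^2(C_2-C_1)+q(C_3-C_2)=0,\]
which is exactly the vanishing of the relevant determinant. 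The degeneracy is structural: because there are only two hyperplane intersection numbers, any count of tangent hyperplanes through codimension-$2$ spaces is already determined by the point counts $C_i$, so no such count can ever supply the missing equation.

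The paper gets its independent third equation by staying inside $\alpha$ and taking a \emph{second moment}: counting triples $(p,r,\Delta)$ with $p\neq r$ points of $K$, $p,r\in\Delta\subset\alpha$, $\Delta$ of codimension $2$, yields
\[\sum_i A_iC_i(C_i-1)=H_2(H_2-1)\frac{q^{2n-4}-1}{q^2-1}.\]
The coefficients $1$, $C_i$, $C_i(C_i-1)$ row-reduce to the Vandermonde rows $1$, $C_i$, $C_i^2$, and since the $C_i$ are pairwise distinct the system is nonsingular and gives the stated $A_i$. Substituting this count for yours repairs the argument; the rest of your proposal (the two first-moment counts and the concluding elimination) is fine. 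As an aside, the lemma's name has nothing to do with a cone structure on tangent hyperplanes: it refers to its later use in Lemma \ref{char-herm}, where $A_1=q^{2n-2}>0$ guarantees that every point of the dual set $K'$ lies on a line meeting $K'$ in exactly $q+1$ points, i.e.\ that $K'$ is non-singular.
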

\begin{proof}
Consider any tangent hyperplane $\Pi$. Denote by $A_{i}$ the number of codimension $2$ subspaces of type $C_{i}$ contained in $\Pi$. 
Then
$$\sum_i A_{i}=\frac{q^{2n}-1}{q^2-1}.$$
We count in two ways the  pairs $(p,\Delta)$, $p\in\Delta\subset\Pi$, with $p$ a point of $K$ and $\Delta$ a subspace of codimension $2$. We obtain
\[\sum_i A_{i}C_{i}=H_{2}\frac{q^{2n-2}-1}{q^2-1}.\]
Next we count in two ways the triples $(p,r,\Delta)$, with $p,r\in\Delta\subset\Pi$, with $p\neq r$ points of $K$ and 
$\Delta$ a subspace of codimension $2$. We obtain
\[\sum_i A_iC_i(C_i-1)=H_{2}(H_{2}-1)\frac{q^{2n-4}-1}{q^2-1}.\]
The obtained system of three linear equations in $A_{1},\:A_{2}$ and $A_{3}$ can easily be solved, yielding the desired result.
\qed
\end{proof}

\begin{lemma}\label{raakhypperpunt}
Each point of $K$ is contained in $H_2$ tangent hyperplanes, while each point not in $K$ is contained in $H_1$ tangent hyperplanes. 
\end{lemma}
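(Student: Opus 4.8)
The plan is to fix an arbitrary point $p$ of $\mathrm{PG}(n,q^2)$ and to pin down the number $t_p$ of tangent hyperplanes through $p$ by a single double count, exploiting the fact that a hyperplane is either of type $H_1$ or a tangent of type $H_2$, and that $|K|=|H(n,q^2)|$ is already known.

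First I would count in two ways the pairs $(r,\Pi)$ where $\Pi$ is a hyperplane through $p$ and $r$ is a point of $K$ lying on $\Pi$. Recall that the number of hyperplanes through a fixed point equals $N:=\frac{q^{2n}-1}{q^2-1}$ and the number through a fixed line equals $M:=\frac{q^{2n-2}-1}{q^2-1}$. Counting by hyperplanes gives
$$\sum_{\Pi\ni p}|K\cap\Pi|=t_pH_2+(N-t_p)H_1,$$
since among the $N$ hyperplanes through $p$ exactly $t_p$ are tangent. Counting by points $r$ gives
$$\sum_{\Pi\ni p}|K\cap\Pi|=\epsilon_pN+(|K|-\epsilon_p)M,$$
where $\epsilon_p=1$ if $p\in K$ and $\epsilon_p=0$ otherwise: the point $r=p$ (possible only when $p\in K$) lies on all $N$ hyperplanes through $p$, while each point $r\neq p$ of $K$ lies only on the $M$ hyperplanes containing the line $\langle p,r\rangle$.

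Equating the two expressions yields a single linear equation in $t_p$,
$$t_p(H_2-H_1)=\epsilon_pN+(|K|-\epsilon_p)M-NH_1,$$
whose right-hand side depends only on whether $p$ lies in $K$. Since $H_1\neq H_2$ this determines $t_p$ uniquely, so $t_p$ takes one of exactly two values according to the value of $\epsilon_p$. It then remains to substitute $|K|=|H(n,q^2)|$ and to verify, for $\epsilon_p=1$ and $\epsilon_p=0$ respectively, that the solution equals $H_2$ and $H_1$; this is a routine computation, best handed to a computer algebra package exactly as in the previous lemmas.

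I expect essentially no obstacle here: the whole content of the argument is the observation that the inner double count produces a right-hand side depending \emph{only} on membership of $p$ in $K$, so that no further incidence data (in particular none of the information from Lemma \ref{linesizeHerm} or Lemma \ref{non-singular}) is required. The one point to keep in mind is that $H_1\neq H_2$, which holds because a non-tangent section is an $H(n-1,q^2)$ while a tangent section is the cone $pH(n-2,q^2)$; these are distinct numbers, so the equation is genuinely solvable for $t_p$.
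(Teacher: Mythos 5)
Your proposal is correct, but it proves the lemma by a genuinely different and more economical route than the paper. The paper's proof is global: it sets $a_i$ equal to the number of tangent hyperplanes through the $i$-th point of $K$, computes $\sum_i a_i$ (using the total number of tangent hyperplanes from the first lemma) and $\sum_i a_i(a_i-1)$ (by counting triples $(p,\tau_1,\tau_2)$, which requires the values $A_j$ from Lemma \ref{non-singular} and $T_j$ from Lemma \ref{linesizeHerm}), and then invokes the variance trick on the identity $\left|H(n,q^2)\right|\sum_i a_i^2-(\sum_i a_i)^2=0$ to force all $a_i$ to be constant; the assertion for points off $K$ is then handled ``in a similar way.'' Your argument instead fixes a single point $p$ and solves one linear equation for $t_p$: counting pairs $(r,\Pi)$ with $r\in K\cap\Pi$, $p\in \Pi$, in two ways gives $t_pH_2+(N-t_p)H_1=\epsilon_p N+(|K|-\epsilon_p)M$, and since $H_2-H_1=(-1)^{n+1}q^{n-1}\neq 0$ this pins down $t_p$ as a function of $\epsilon_p$ alone. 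This is sound: it uses only the hypothesis that $H_1,H_2$ are the only hyperplane intersection numbers and the value $|K|=|H(n,q^2)|$ from the first lemma, and it treats the two assertions uniformly, whereas the paper needs two further lemmas and a separate argument for exterior points. (In effect you apply to a point the same one-variable counting technique the paper itself uses for codimension-$2$ spaces in Lemma \ref{linesizeHerm}.) One small improvement: the closing ``routine computation'' can be avoided entirely. The genuine Hermitian variety $H(n,q^2)$ satisfies your equation with the same constants $N$, $M$, $H_1$, $H_2$, $|K|$, and for it the number of tangent hyperplanes through a point is classically $H_2$ or $H_1$ according as the point lies on the variety or not; since your equation has a unique solution $t_p$ for each value of $\epsilon_p$, those must be the values for $K$ as well, with no algebra needed. (For the record, the required identities reduce to $(H_2-H_1)^2=N-M=q^{2n-2}$ and $H_1\bigl(H_2-H_1+N\bigr)=|K|M$, both of which check out.)
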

\begin{proof}
We need to show that each point of $K$ is contained in $H_2$ tangent hyperplanes. Set $K=\{p_1,\cdots,p_{\left|H(n,q^2)\right|}\}$. Let $a_i$ denote the number of tangent hyperplanes containing the point $p_i$. Counting pairs $(p,\tau)$, $p\in K$, $p\in\tau$, $\tau$ a tangent hyperplane we obtain:
\begin{equation}\label{somHerm}
\sum_i a_i= |H(n,q^2)|H_2.
\end{equation}
Next we count triples $(p,\tau_1,\tau_2)$,  $p\in K$, $p\in\tau_i$, $\tau_i$ a tangent hyperplane, $i=1,2$, $\tau_1 \neq \tau_2$.
This yields the following equation.
\begin{equation}\label{somkwadHerm}
\sum_i a_i(a_i-1)= \left|H(n,q^2)\right|\left(\sum_{j} A_j(T_j-1)C_j \right),
\end{equation}
where as before $T_i$ denotes the number of tangent hyperplanes containing a fixed codimension $2$-space $\Pi$ of type $C_i$ and $A_{i}$ is the number of codimension $2$ subspaces of type $C_{i}$ in a tangent hyperplane.

From Equations (\ref{somHerm}) and (\ref{somkwadHerm}) we can compute 
\[\left|H(n,q^2)\right|\sum_i a_i^2 - (\sum_i a_i)^2=0,\] 
from which we deduce, using the variance trick, that $a_i$ is a constant equal to $H_2$.

The second assertion is proved in a similar way.
\qed
\end{proof}

Denote by $\mathcal{H}$ the set of tangent hyperplanes of $K$. Let $\delta:\mathrm{PG}(n,q)\rightarrow\mathrm{PG}(n,q)^D$ be any fixed chosen
duality of $\mathrm{PG}(n,q)$.  

\begin{lemma}\label{intersection-herm}
The point set $K':=\mathcal{H}^\delta$ is a $(1,q+1,q^2+1)$-set in $\mathrm{PG}(n,q)^D$.
\end{lemma}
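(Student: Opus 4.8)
The plan is to exploit the standard dictionary of projective duality. Under $\delta$, hyperplanes of the ambient space become points of the dual, and, reciprocally, a \emph{line} of the dual space $\mathrm{PG}(n,q^2)^D$ is precisely the $\delta$-image of a pencil of hyperplanes, that is, of the set of all hyperplanes through a fixed subspace of codimension $2$. I would make this correspondence the backbone of the argument.

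First I would fix an arbitrary line $\ell$ of the dual space and set $\Pi:=\ell^{\delta^{-1}}$, the corresponding codimension-$2$ subspace of the ambient space. The $q^2+1$ points of $\ell$ are then exactly the $\delta$-images of the $q^2+1$ hyperplanes passing through $\Pi$. Consequently $|\ell\cap K'|$ equals the number of tangent hyperplanes (i.e.\ members of $\mathcal{H}$) that contain $\Pi$, which translates the line-intersection question for $K'$ into a question about pencils of tangent hyperplanes for $K$.

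The crucial observation is that $K$ has intersection numbers $C_1,C_2,C_3$ with codimension-$2$ subspaces by hypothesis, so \emph{every} codimension-$2$ subspace — in particular the $\Pi$ arising from $\ell$ — is of type $C_1$, $C_2$ or $C_3$. I would then invoke Lemma~\ref{linesizeHerm}, which says that the number of tangent hyperplanes through such a $\Pi$ is exactly $T_1=q+1$, $T_2=1$ or $T_3=q^2+1$ according to its type. Hence $|\ell\cap K'|\in\{1,q+1,q^2+1\}$ for every line $\ell$ of the dual space, which is precisely the claim that $K'$ is a $(1,q+1,q^2+1)$-set.

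There is no genuine obstacle here: once the dictionary ``line of the dual $\leftrightarrow$ pencil of hyperplanes $\leftrightarrow$ codimension-$2$ subspace'' is in place, the statement is an immediate re-reading of Lemma~\ref{linesizeHerm}. The only points deserving a moment's care are to note that the three values $\{T_1,T_2,T_3\}$ coincide as a set with the claimed $\{1,q+1,q^2+1\}$, and that no codimension-$2$ subspace escapes the three allowed types — both immediate from the hypotheses on the intersection numbers of $K$.
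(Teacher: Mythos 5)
Your proposal is correct and is essentially the paper's own proof: the authors likewise observe that by Lemma~\ref{linesizeHerm} every codimension-$2$ subspace lies on exactly $1$, $q+1$ or $q^2+1$ tangent hyperplanes, and that applying $\delta$ (lines of the dual space $\leftrightarrow$ pencils of hyperplanes through a codimension-$2$ subspace) immediately gives the result. You have merely spelled out the duality dictionary that the paper leaves implicit.
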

\begin{proof}
As by Lemma \ref{linesizeHerm} a codimension $2$ subspace is contained in either $1$, $q+1$ or $q^2+1$ tangent hyperplanes, applying $\delta$ immediately yields the result.
\qed
\end{proof}

\begin{lemma}\label{char-herm}
The set $K'$ is the point set of a non-singular Hermitian variety $H(n,q^2)$ in $\mathrm{PG}(n,q)^D$.
\end{lemma}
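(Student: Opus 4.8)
The plan is to verify that $K'$ satisfies all the hypotheses of Theorem \ref{Hermitian} and then to invoke that theorem in the dual space $\mathrm{PG}(n,q^2)^D$. By Lemma \ref{intersection-herm} the set $K'$ is of type $(1,q+1,q^2+1)$, so here $r=q+1$; since $q>2$ forces $q\geq 3$, we have $3\leq q+1\leq q^2-1$ (the upper bound being equivalent to $(q-2)(q+1)\geq 0$). Thus it remains only to check that $K'$ is non-singular and to verify the second bulleted hypothesis, namely that no plane of $\mathrm{PG}(n,q^2)^D$ meets $K'$ solely in $(q+1)$- and $(q^2+1)$-secants.

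For non-singularity I would argue through the duality $\delta$. A point $P=\tau^\delta$ of $K'$ corresponds to a tangent hyperplane $\tau$, and the lines of $\mathrm{PG}(n,q^2)^D$ through $P$ correspond under $\delta^{-1}$ to the codimension $2$ subspaces $\Delta\subset\tau$; moreover $|\ell\cap K'|$ equals the number of tangent hyperplanes through the corresponding $\Delta$, which by Lemma \ref{linesizeHerm} is $q+1$, $1$ or $q^2+1$ according as $\Delta$ has type $C_1$, $C_2$ or $C_3$. By Lemma \ref{non-singular} every tangent hyperplane contains $A_1=q^{2n-2}>0$ subspaces of type $C_1$, so through every point of $K'$ there passes a line meeting $K'$ in exactly $q+1$ points. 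Since $q+1\notin\{1,q^2+1\}$ for $q>2$, no point of $K'$ is singular.

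The main obstacle is the plane condition, which I would again translate through $\delta$. A plane $\pi$ of $\mathrm{PG}(n,q^2)^D$ corresponds to a codimension $3$ subspace $\Sigma=\pi^{\delta^{-1}}$; its lines correspond to the codimension $2$ subspaces $\Delta\supseteq\Sigma$, and as above $|\ell\cap K'|$ is the number $T_i$ of tangent hyperplanes through $\Delta$. Hence a violating plane exists precisely when there is a codimension $3$ subspace $\Sigma$ through which no codimension $2$ subspace of type $C_2$ passes, and the goal becomes to exclude such a $\Sigma$. Passing to the quotient $\mathrm{PG}(n,q^2)/\Sigma\cong\mathrm{PG}(2,q^2)$, the codimension $2$ subspaces through $\Sigma$ become the points and the tangent hyperplanes through $\Sigma$ become a set $\mathcal{T}$ of lines, a point being of type $C_1,C_2,C_3$ exactly when it lies on $q+1,1,q^2+1$ lines of $\mathcal{T}$. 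The assumption that no $C_2$-point exists says every point of $\mathrm{PG}(2,q^2)$ lies on $q+1$ or $q^2+1$ lines of $\mathcal{T}$. If no point lay on all $q^2+1$ of them, a double count of incidences would force $|\mathcal{T}|=(q^4+q^2+1)(q+1)/(q^2+1)$, which is not an integer, since $q^4+q^2+1\equiv 1$ and hence the numerator is $\equiv q+1\not\equiv 0 \pmod{q^2+1}$; this contradiction settles that case. The part I expect to require the most care is the remaining situation, in which some codimension $2$ subspace through $\Sigma$ has type $C_3$ (a point lying on all $q^2+1$ lines of $\mathcal{T}$): there the bare incidence count is consistent, so one must bring in finer information, for instance by analysing the configuration of $C_3$-points inside the quotient together with the global values $A_i$ and $T_i$, in order to derive the required contradiction and thereby complete the application of Theorem \ref{Hermitian}.
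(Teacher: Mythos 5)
Your verification of the first hypothesis of Theorem \ref{Hermitian} (with $r=q+1$ and $3\leq q+1\leq q^2-1$ for $q>2$) and your non-singularity argument via $A_1=q^{2n-2}>0$ from Lemma \ref{non-singular} are both correct, and they coincide with the paper's own reasoning. The genuine gap is in the plane condition. Your dual translation is accurate (a violating plane corresponds to a codimension $3$ space $\Sigma$ lying on no codimension $2$ space of type $C_2$), and your Case A --- no point of the quotient plane on all $q^2+1$ lines of $\mathcal{T}$ --- is correctly excluded by the incidence count. But Case B, where some codimension $2$ space through $\Sigma$ has type $C_3$, is precisely the hard case, and you leave it unresolved, offering only the hope that ``finer information'' about the $C_3$-points and the global constants $A_i,T_i$ will yield a contradiction. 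No such argument is supplied, and plain incidence counting genuinely cannot settle it, so the proof is incomplete exactly where it needed to be completed.

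The paper closes this case with one classical tool that you never invoke: maximal arcs. Working directly in the plane $\pi$ of $\mathrm{PG}(n,q^2)^D$ (equivalently, dualize inside your quotient plane, which turns the line set $\mathcal{T}$ into a point set met by every line in $q+1$ or $q^2+1$ points --- this treats both of your cases simultaneously), suppose every line of $\pi$ meets $K'\cap\pi$ in $q+1$ or $q^2+1$ points. Then every line meets the complement $\pi\setminus K'$ in $0$ or $q^2-q$ points, so this complement is a maximal arc of degree $q^2-q$ in $\mathrm{PG}(2,q^2)$. Since the degree of a nonempty proper maximal arc must divide the order of the plane, $q(q-1)$ would have to divide $q^2$, forcing $q=2$, contrary to $q>2$. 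In particular $(q^2+1)$-secants are excluded, which is exactly your stuck Case B. (The only configuration this does not literally cover is $\pi\subseteq K'$, i.e.\ all lines of your quotient plane lying in $\mathcal{T}$; that degenerate situation can be ruled out by counting $|K|$ along the codimension $2$ spaces through $\Sigma$, all of which would then have type $C_3$.)
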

\begin{proof}
We will check the conditions of Theorem \ref{Hermitian}. Since every tangent hyperplane contains subspaces of type $C_{1}$ (by Lemma \ref{non-singular}), we see that every point
of $K'$ is contained in lines intersecting $K'$ in exactly $q+1$ points. Hence $K'$ is non-singular. As $q>2$ also the first condition is satisfied. Now assume there would be a plane $\pi$ intersecting $K'$ in a point set such that every line of $\pi$
would intersect $K'\cap \pi$ in $q+1$ or $q^2+1$ points. Then $K'\cap\pi$ would be the complement of a maximal $(q^2-q)$-arc in $\pi$, implying
that $q^2-q$ divides $q^2$, a contradiction since $q>2$. Consequently also the second condition of theorem \ref{Hermitian} is satisfied and $K'$
is the point set of a non-singular Hermitian variety in $\mathrm{PG}(n,q^2)$. 
\qed
\end{proof}

\begin{theorem}\label{theoremHermitian}
The set $K$ is the point set of a non-singular Hermitian variety $H(n,q^2)$.
\end{theorem}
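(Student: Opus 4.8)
The plan is to exploit the duality that the preceding lemmas have set up. By Lemma \ref{intersection-herm} and Lemma \ref{char-herm} the tangent hyperplanes of $K$, transported to the dual space via $\delta$, form a non-singular Hermitian variety $K'$ in $\mathrm{PG}(n,q^2)^D$. The idea is then to dualize back and recover $K$ itself from its set $\mathcal{H}$ of tangent hyperplanes, rather than from the dual object $K'$.

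First I would dualize $K'$ back to $\mathrm{PG}(n,q^2)$. Here one uses the self-dual nature of non-singular Hermitian varieties: if $\bar H$ is a non-singular Hermitian variety with associated Hermitian polarity $\perp$, then the map $r\mapsto r^\perp$ is a bijection between the points of $\bar H$ and its tangent hyperplanes, and this set of tangent hyperplanes is precisely a non-singular Hermitian variety in the dual space. Reading this correspondence backwards through $\delta$, the fact that $K'$ is a non-singular Hermitian variety in $\mathrm{PG}(n,q^2)^D$ yields that $\mathcal{H}$ is exactly the set of tangent hyperplanes of some non-singular Hermitian variety $\bar H$ in $\mathrm{PG}(n,q^2)$.

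Next I would identify $K$ with $\bar H$ by a purely numerical membership criterion. By Lemma \ref{raakhypperpunt}, a point lies on exactly $H_2$ tangent hyperplanes of $K$ precisely when it belongs to $K$, and on exactly $H_1$ of them otherwise. The same dichotomy holds for the genuine Hermitian variety $\bar H$: a point $p$ lies on a tangent hyperplane $r^\perp$ exactly when $r\in p^\perp$, so the number of tangent hyperplanes through $p$ equals $|\bar H\cap p^\perp|$; for $p\in\bar H$ the hyperplane $p^\perp$ is tangent and meets $\bar H$ in $H_2$ points, while for $p\notin\bar H$ it is a secant meeting $\bar H$ in $H_1$ points. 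Since the tangent hyperplanes of $K$ and of $\bar H$ are both equal to $\mathcal{H}$, and since $H_1\neq H_2$ (evident from the explicit formulas, as one is the size of an $H(n-1,q^2)$ and the other of a cone), a point belongs to $K$ if and only if it belongs to $\bar H$. Hence $K=\bar H$ is the point set of a non-singular Hermitian variety.

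I expect the dualization step to be the main obstacle: one must argue cleanly that feeding the non-singular Hermitian variety $K'$ back through $\delta$ produces precisely the tangent hyperplanes of a non-singular Hermitian variety, i.e., one must invoke correctly the interchange of a Hermitian variety with its set of tangent hyperplanes under the associated polarity. Once this correspondence is in place, the final identification $K=\bar H$ is an immediate consequence of Lemma \ref{raakhypperpunt} together with the inequality $H_1\neq H_2$, and needs no further computation.
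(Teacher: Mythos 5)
Your proposal is correct, and it follows the paper's broad strategy (dualize, recognize a Hermitian variety, dualize back), but the return trip is made by a genuinely different mechanism. The paper, having shown in Lemma \ref{char-herm} that $K'=\mathcal{H}^\delta$ is a non-singular Hermitian variety, notes that $K'$ therefore satisfies the same intersection hypotheses as $K$, uses Lemma \ref{raakhypperpunt} to see that the tangent hyperplanes of $K'$ are exactly the hyperplanes $p^\delta$ with $p\in K$, and then simply re-applies Lemma \ref{char-herm} with $K'$ in the role of $K$: the dual of the tangent-hyperplane set of $K'$, which is precisely $K$, is then Hermitian. This is a bootstrap that stays entirely inside the paper's own combinatorial machinery; the only external input is that genuine Hermitian varieties have the stated intersection numbers. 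You instead import classical algebraic theory: the self-duality of a non-singular Hermitian variety under its polarity $\perp$ (to conclude that $\mathcal{H}$ is the tangent-hyperplane set of a genuine Hermitian variety $\bar H$), plus the classical count that the number of tangent hyperplanes of $\bar H$ through a point $p$ equals $\left|\bar H\cap p^\perp\right|$, hence $H_2$ or $H_1$ according as $p\in\bar H$ or not; combined with Lemma \ref{raakhypperpunt} and $H_1\neq H_2$ this forces $K=\bar H$. Both routes are valid. Yours buys a very transparent final identification with no further counting and no need to verify that $K'$ satisfies all hypotheses of the lemma chain; the cost is exactly the step you flag: to ``read the correspondence backwards'' one must know that the tangent-hyperplane operation is an involution on non-singular Hermitian varieties (equivalently, one can use transitivity of the collineation group on such varieties, together with the fact that collineations and arbitrary dualities --- your $\delta$ need not be the canonical one --- preserve the class of Hermitian varieties). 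These facts are classical and true, so there is no gap; the paper's re-application of its own lemma simply sidesteps the need for them.
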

\begin{proof}
Clearly $K'$ is a point set satisfying the same conditions on intersections with hyperplanes and subspaces of codimension $2$ as $K$ in $\mathrm{PG}(n,q^2)$. Since the tangent hyperplanes at $K'$ are exactly those hyperplanes containing $H_2$ points of $K'$, we find that if we apply the duality $\delta^{-1}$ to $\mathrm{PG}(n,q)^D$, the tangent hyperplanes of $K'$ are mapped bijectively to the points of $K$ (see Lemma \ref{raakhypperpunt}). Hence, we can now apply Lemma \ref{char-herm} with $K'$ in $\mathrm{PG}(n,q^2)^D$ replaced by $K$ in $\mathrm{PG}(n,q^2)$. We conclude that $K$ is the point set of an $H(n,q^2)$.
\qed
\end{proof}

\subsection{Hyperbolic quadrics}
First of all let us recall what the intersections of a $Q^+(2n+1,q)$ with hyperplanes and spaces of codimension 2 look like. A hyperplane can intersect a $Q^+(2n+1,q)$ in a $Q(2n,q)$ or in a cone $pQ^+(2n-1,q)$. 
A space of codimension 2 can intersect a $Q^+(2n+1,q)$ either in a $Q^-(2n-1,q)$, in a cone $pQ(2n-2,q)$,  in a $Q^+(2n-1,q)$ or in a cone $LQ^+(2n-3,q)$.

So the intersection numbers with hyperplanes are
$$H_1=\frac{q^{2n}-1}{q-1},\:H_2=1+q\frac{(q^n-1)(q^{n-1}+1)}{q-1}.$$

The intersection numbers with spaces of codimension 2 are
$$C_1=\frac{(q^n+1)(q^{n-1}-1)}{q-1},\:C_2=1+{\frac {q \left( {q}^{2\,n-2}-1 \right) }{q-1}},\:$$
$$\:C_3={\frac { \left( {q}^{n}-1 \right)  \left( {q}^{n-1}+1 \right) }{q-1}},C_4=1+q+{\frac {{q}^{2} \left( {q}^{n-1}-1 \right)  \left( {q}^{n-2}+1
 \right) }{q-1}}.$$

From now on let $K$ be a set of points in $\mathrm{PG}(2n+1,q)$ having the same intersection numbers with hyperplanes and codimension $2$-spaces as a $Q^+(2n+1,q)$. We want to prove that $K$ is the point set of a $Q^+(2n+1,q)$. We will call subspaces intersecting $K$ in a given
number $m$ of points, {\em subspaces of type $m$}. For obvious reasons a hyperplane intersecting
$K$ in $H_{2}$ points will also be called a {\em tangent hyperplane}.

\begin{lemma}\label{puntenHyp}
The set $K$ has size $|Q^+(2n+1,q)|$. Furthermore, there are $|Q^+(2n+1,q)|$ tangent hyperplanes. 
\end{lemma}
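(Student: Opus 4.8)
The plan is to mimic the counting argument used for the first lemma in the Hermitian subsection, adapted to $\mathrm{PG}(2n+1,q)$. Write $x=|K|$ and let $h_1$ denote the number of hyperplanes of type $H_1$; the total number of hyperplanes is $\frac{q^{2n+2}-1}{q-1}$. First I would double count the incident pairs $(p,\alpha)$ with $p\in K$ and $\alpha$ a hyperplane through $p$, using that a point of $\mathrm{PG}(2n+1,q)$ lies in $\frac{q^{2n+1}-1}{q-1}$ hyperplanes, to obtain
\[x\,\frac{q^{2n+1}-1}{q-1}=h_1H_1+\left(\frac{q^{2n+2}-1}{q-1}-h_1\right)H_2.\]
Next I would double count the triples $(p,r,\alpha)$ with $p\neq r$ in $K$ and $\alpha$ a hyperplane containing both, using that a pair of points lies in $\frac{q^{2n}-1}{q-1}$ hyperplanes, to obtain
\[x(x-1)\,\frac{q^{2n}-1}{q-1}=h_1H_1(H_1-1)+\left(\frac{q^{2n+2}-1}{q-1}-h_1\right)H_2(H_2-1).\]

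Solving the first equation for $h_1$ and inserting the result into the second gives a quadratic equation in $x$. I expect its two roots to be $x_1=|Q^+(2n+1,q)|$ and a second value $x_2$ that is a cumbersome rational expression in $q$, and the goal is to rule out $x_2$. To do so I would argue exactly as in the Hermitian lemma: fix any space $\Pi$ of codimension $2$, say of type $C_i$, and let $k_i$ be the number of type-$H_1$ hyperplanes through $\Pi$. Since $\Pi$ lies in $q+1$ hyperplanes and the points of $K$ outside $\Pi$ are partitioned among them, counting points of $K$ gives
\[k_i(H_1-C_i)+(q+1-k_i)(H_2-C_i)+C_i=x_2.\]
Solving for $k_i$ for each $i\in\{1,2,3,4\}$ and checking that none of the resulting values is a nonnegative integer (for $n\geq 2$) yields the desired contradiction, so that $x=|Q^+(2n+1,q)|$.

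The one genuinely new feature compared with the Hermitian case is that a codimension-$2$ space now meets $K$ in one of four values $C_1,\dots,C_4$ rather than three, so the elimination of $x_2$ requires checking four expressions $k_i$ instead of three; this is the step I expect to be the main, albeit purely computational, obstacle, and it is what secures the uniqueness of the root. Since the argument only needs that whatever type $\Pi$ happens to have produces a non-integral $k_i$, it is robust and does not require knowing in advance which of the four types actually occur. Finally, substituting $x=|Q^+(2n+1,q)|$ back into the first counting equation determines $h_1$, and hence the number of tangent hyperplanes $\frac{q^{2n+2}-1}{q-1}-h_1$, which I expect to simplify to $|Q^+(2n+1,q)|$, giving the second assertion.
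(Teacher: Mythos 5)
Your proposal is correct and follows essentially the same route as the paper: the same two counting equations in $x$ and $h_1$, elimination of $h_1$ to get a quadratic in $x$, exclusion of the spurious root $x_2$ by solving $k_i(H_1-C_i)+(q+1-k_i)(H_2-C_i)+C_i=x_2$ for each of the four codimension-$2$ types and observing non-integrality, and finally substitution of $x=|Q^+(2n+1,q)|$ into the first equation to count the tangent hyperplanes. The paper's proof is exactly this argument (with the explicit values of the $k_i$ computed), so there is nothing to add.
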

\begin{proof}
We count in two ways the pairs $(p,\alpha)$ where $p$ is a point of $K$ and $\alpha$ a hyperplane such that $p\in\alpha$, respectively the triples $(p,r,\alpha)$ where $p\neq r$ are points of $K$ and $\alpha$ a hyperplane such that $p,r\in\alpha$. Call $h_1$ the number of hyperplanes intersecting $K$ in $H_1$ points and $x$ the size of $K$. This yields the following equations
\begin{equation}\label{Hyp1}
h_1H_1+(\frac{q^{2n+2}-1}{q-1}-h_1)H_2=x\frac{q^{2n+1}-1}{q-1},
\end{equation}
\begin{equation}\label{Hyp2}
h_1H_1(H_1-1)+(\frac{q^{2n+2}-1}{q-1}-h_1)H_2(H_2-1)=x(x-1)\frac{q^{2n}-1}{q-1}.
\end{equation}
Solving $h_1$ in terms of $x$ from the first equation and substituting this in the second equation yields a quadratic equation in $x$. The solutions are $x_1=|Q^+(2n+1,q)|$ and 
$x_2$, which is, as in the Hermitian case, an easily computed but tedious expression in $q$.

We show that the latter cannot occur. So suppose the set $K$ contains $x_2$ points. Consider a space $\Pi$ of codimension 2 intersecting the set $K$ in $C_i$ points. Denote by $k_i$ the number of non-tangent hyperplanes containing $\Pi$. Then we obtain 
$$k_i(H_1-C_i)+(q+1-k_i)(H_2-C_i)+C_i=x_2.$$
Solving this equation in $k_{i}$ for $i=1,2,3,4$ yields
\[k_1=3+\frac{q-1}{q^n+1}, \mathrm{\ } k_2=q\frac{q^{n-1}+1}{q^n+1},\] \[k_3=\frac{2q^n+q+1}{q^n+1}, \mathrm{\ } k_4=-{\frac {{q}^{n+1}-2\,{q}^{n}-1}{{q}^{n}+1}}.\]
These are not natural numbers, the desired contradiction.

The second assertion follows by substituting $x=\left|Q^+(2n+1,q)\right|$ in Equation (\ref{Hyp1}).
\qed
\end{proof}

\begin{lemma}\label{linesize}
Through a space of codimension 2 of type $C_1,C_2,C_3,C_4$ there pass respectively $T_1=0,\:T_2=1,\:T_3=2$ and $T_4=q+1$ tangent hyperplanes. 
\end{lemma}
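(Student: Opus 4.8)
The plan is to imitate the counting argument used for the Hermitian analogue (Lemma \ref{linesizeHerm}). Fix a codimension-$2$ space $\Pi$ meeting $K$ in $C_i$ points. In $\mathrm{PG}(2n+1,q)$ the hyperplanes through $\Pi$ form a pencil, so there are exactly $q+1$ of them, and they partition the points outside $\Pi$: each point not on $\Pi$ lies on precisely one hyperplane of the pencil, while each point of $\Pi$ lies on all $q+1$. Each hyperplane of the pencil is either tangent (type $H_2$) or non-tangent (type $H_1$); write $T_i$ for the number of tangent ones, so that $q+1-T_i$ are non-tangent.

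Next I would count the points of $K$ through this partition. A tangent hyperplane through $\Pi$ contributes $H_2-C_i$ points of $K$ off $\Pi$, a non-tangent one contributes $H_1-C_i$, and the $C_i$ points of $K$ on $\Pi$ are counted once. Since the off-$\Pi$ parts partition $K\setminus\Pi$, this gives the single equation
\[ C_i+T_i(H_2-C_i)+(q+1-T_i)(H_1-C_i)=|K|. \]
This is \emph{linear} in $T_i$: the coefficient of $T_i$ is $H_2-H_1$, and a short computation yields the clean identity $H_2-H_1=q^n$. Solving then gives
\[ T_i=\frac{|K|+q\,C_i-(q+1)H_1}{q^n}. \]

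Finally I would substitute the known quantities. By Lemma \ref{puntenHyp} we have $|K|=|Q^+(2n+1,q)|=\frac{(q^n+1)(q^{n+1}-1)}{q-1}$, while $H_1$ and $C_1,\dots,C_4$ are the explicit expressions recalled above. Plugging each $C_i$ into the formula and clearing the common denominator $q-1$ produces $T_1=0$, $T_2=1$, $T_3=2$ and $T_4=q+1$, as claimed.

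I do not expect any conceptual obstacle here: the argument is a direct double count and the only real work is the routine algebraic simplification, which the identity $H_2-H_1=q^n$ streamlines considerably. The one point worth emphasizing is that the displayed equation determines $T_i$ uniquely from the type $C_i$ alone, so the number of tangent hyperplanes through $\Pi$ depends only on $|\Pi\cap K|$ and not on the particular choice of $\Pi$; this uniformity is precisely what the subsequent lemmas will require.
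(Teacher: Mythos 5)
Your proposal is correct and matches the paper's own proof essentially verbatim: the paper sets up exactly the same double count $C_i+T_i(H_2-C_i)+(q+1-T_i)(H_1-C_i)=|K|$ over the pencil of $q+1$ hyperplanes through $\Pi$ and solves the resulting linear equation for $T_i$. Your added observations (the identity $H_2-H_1=q^n$ and the explicit solved form of $T_i$) are just a slightly more detailed write-up of the same routine computation.
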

\begin{proof}
Let $\Pi$ be a codimension $2$-space intersecting $K$ in $C_i$ points and let $T_i$ denote the number of tangent hyperplanes containing $\Pi$. We obtain the following equation.
$$C_i+T_i(H_2-C_i)+(q+1-T_i)(H_1-C_i)=|K|.$$
Solving the equation in $T_i$ for $i=1,2,3$ yields the result.
\qed
\end{proof}

\begin{lemma}\label{codim2hyp}
Each tangent hyperplane contains $A_i$ subspaces of codimension $2$ intersecting $K$ in $C_{i}$ points, where 
\[A_{1}=0, \mathrm{\ } A_{2}=q^{n-1}(q^n-1), \mathrm{\ } A_{3}=q^{2n},\]
\[A_4=\frac{(q^n-1)(q^{n-1}+1)}{q-1}.\]
\end{lemma}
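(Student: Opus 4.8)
The statement to prove (Lemma \ref{codim2hyp}) is a direct analogue of Lemma \ref{non-singular} from the Hermitian case, so the plan is to mimic that proof exactly, now with four types of codimension $2$ subspaces instead of three. The strategy is a standard double-counting argument inside a fixed tangent hyperplane $\Pi$, producing a system of linear equations in the four unknowns $A_1,A_2,A_3,A_4$.

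First I would fix an arbitrary tangent hyperplane $\Pi$ (one intersecting $K$ in $H_2$ points), and let $A_i$ denote the number of codimension $2$ subspaces of type $C_i$ contained in $\Pi$. The first equation is the trivial count that the total number of codimension $2$ subspaces inside a hyperplane $\mathrm{PG}(2n,q)$ equals the number of hyperplanes of $\mathrm{PG}(2n,q)$, namely
\[\sum_i A_i=\frac{q^{2n+1}-1}{q-1}.\]
Next I would count in two ways the incident pairs $(p,\Delta)$ with $p$ a point of $K$ lying in a codimension $2$ subspace $\Delta\subset\Pi$. Since $\Pi$ carries exactly $H_2$ points of $K$ and each such point lies in $\frac{q^{2n}-1}{q-1}$ subspaces of codimension $2$ of $\Pi$, this gives
\[\sum_i A_iC_i=H_2\frac{q^{2n}-1}{q-1}.\]
The third equation comes from counting triples $(p,r,\Delta)$ with $p\neq r$ two points of $K$ contained in a common codimension $2$ subspace $\Delta\subset\Pi$, yielding
\[\sum_i A_iC_i(C_i-1)=H_2(H_2-1)\frac{q^{2n-2}-1}{q-1}.\]

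These three equations alone are insufficient to pin down four unknowns, which is the one genuine difference from the Hermitian case and the main obstacle. I expect to need a fourth independent relation, and the natural source is Lemma \ref{linesize}: a codimension $2$ subspace of type $C_1$ lies in no tangent hyperplane at all, so $A_1=0$ is forced immediately and removes one unknown for free. That reduces the problem to three linear equations in the three remaining unknowns $A_2,A_3,A_4$, exactly parallel to the Hermitian situation. (Alternatively, one could extract the extra relation by double-counting incident pairs $(\Delta,\Pi')$ of a codimension $2$ subspace $\Delta\subset\Pi$ of each type together with the other tangent hyperplanes $\Pi'$ through $\Delta$, using the values $T_i$ from Lemma \ref{linesize}, but invoking $A_1=0$ is cleaner.)

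Finally I would substitute the explicit values of $C_1,C_2,C_3,C_4$ and $H_2$ and solve the resulting linear system. This is purely mechanical and best left to a computer algebra package, as the authors do elsewhere; the only thing to verify is that the solution matches the claimed closed forms for $A_2,A_3,A_4$ and, as a consistency check, that all four values are non-negative integers. The step I anticipate as the real crux is establishing that the system is genuinely determined — i.e. confirming that after setting $A_1=0$ the three counting equations are linearly independent in $A_2,A_3,A_4$ — since if they were dependent one would have to hunt for yet another counting identity.
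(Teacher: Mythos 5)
Your strategy is exactly the paper's: invoke Lemma \ref{linesize} to get $A_1=0$ (since $T_1=0$, no tangent hyperplane contains a subspace of type $C_1$), and then solve the three standard counting equations for $A_2,A_3,A_4$. However, your third equation is wrong, and this is not a cosmetic slip, because the entire proof consists of solving this linear system. The right-hand side of the triple count must be $H_2(H_2-1)\frac{q^{2n-1}-1}{q-1}$, not $H_2(H_2-1)\frac{q^{2n-2}-1}{q-1}$. Indeed, the codimension $2$ subspaces of $\mathrm{PG}(2n+1,q)$ lying in $\Pi$ are precisely the hyperplanes of $\Pi\cong\mathrm{PG}(2n,q)$, and the number of hyperplanes of $\mathrm{PG}(2n,q)$ containing two fixed distinct points (equivalently, containing the line they span) is $\frac{q^{2n-1}-1}{q-1}$; your three counts should drop by one factor of $q$ at each step, giving $\frac{q^{2n+1}-1}{q-1}$, $\frac{q^{2n}-1}{q-1}$, $\frac{q^{2n-1}-1}{q-1}$. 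You appear to have transplanted the pattern from the Hermitian Lemma \ref{non-singular}, where the underlying field is $\mathrm{GF}(q^2)$ and consecutive counts drop by a factor $q^2$ (exponents $2n$, $2n-2$, $2n-4$); over $\mathrm{GF}(q)$ that pattern is incorrect. Since only the right-hand side of one equation is off, and by an amount $H_2(H_2-1)q^{2n-2}\neq 0$, the unique solution of your system cannot equal the claimed triple $(A_2,A_3,A_4)$, so your final verification step would fail and you would be stuck.

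The issue you flag as the ``real crux'' --- whether the system determines $A_2,A_3,A_4$ --- is in fact immediate: the coefficient matrix has rows $(1,1,1)$, $(C_2,C_3,C_4)$ and $\bigl(C_2(C_2-1),C_3(C_3-1),C_4(C_4-1)\bigr)$; adding the second row to the third turns the latter into $(C_2^2,C_3^2,C_4^2)$, so the determinant is the Vandermonde determinant $(C_3-C_2)(C_4-C_2)(C_4-C_3)$, which is nonzero because $C_2,C_3,C_4$ are pairwise distinct. With the corrected third equation your argument coincides with the paper's proof of this lemma.
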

\begin{proof}
Consider any tangent hyperplane $\Pi$. By the previous lemma $A_1=0$. 
Hence  
\[\sum_{i=2}^{4}A_{i}=\frac{q^{2n+1}-1}{q-1}.\]
We count in two ways the  pairs $(p,\Delta)$, $p\in\Delta\subset\Pi$, with $p$ a point of $K$ and $\Delta$ a subspace of codimension $2$. We obtain
\[\sum_{i=2}^{4} A_iC_i=H_{2}\frac{q^{2n}-1}{q-1}.\]
Next we count in two ways the triples $(p,r,\Delta)$, with $p,r\in\Delta\subset\Pi$, with $p\neq r$ points of $K$ and 
$\Delta$ a subspace of codimension $2$. We obtain
\[\sum_{i=2}^{4} A_iC_i(C_i-1)=H_{2}(H_{2}-1)\frac{q^{2n-1}-1}{q-1}.\]
The obtained system of three linear equations in $A_{2},A_{3}$ and $A_{4}$ can easily be solved, yielding the desired result.
\qed
\end{proof}

\begin{lemma} \label{correspondence}
Each point of $K$ is contained in $H_2$ tangent hyperplanes, while each point not in $K$ is contained in $H_1$ tangent hyperplanes.
\end{lemma}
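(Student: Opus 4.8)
The plan is to follow exactly the strategy of Lemma~\ref{raakhypperpunt}, transporting the double-counting argument from the Hermitian setting to the hyperbolic one. Write $K=\{p_1,\dots,p_{|Q^+(2n+1,q)|}\}$ and let $a_i$ denote the number of tangent hyperplanes through $p_i$. Since every tangent hyperplane contains exactly $H_2$ points of $K$ and, by Lemma~\ref{puntenHyp}, there are precisely $|Q^+(2n+1,q)|$ of them, counting the incident pairs $(p,\tau)$ with $p\in K$ and $\tau$ a tangent hyperplane through $p$ gives
\[\sum_i a_i=|Q^+(2n+1,q)|\,H_2.\]

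Next I would count the triples $(p,\tau_1,\tau_2)$ with $p\in K$ and $\tau_1\neq\tau_2$ tangent hyperplanes through $p$; the left-hand side of this count is $\sum_i a_i(a_i-1)$. For the right-hand side, fix a tangent hyperplane $\tau_1$: any second tangent hyperplane $\tau_2$ meets it in a codimension $2$ subspace $\Pi=\tau_1\cap\tau_2$ of some type $C_j$. By Lemma~\ref{codim2hyp} there are $A_j$ subspaces of each type $C_j$ inside $\tau_1$, by Lemma~\ref{linesize} each of these carries $T_j-1$ tangent hyperplanes besides $\tau_1$, and each contains $C_j$ points of $K$; summing over the $|Q^+(2n+1,q)|$ tangent hyperplanes gives
\[\sum_i a_i(a_i-1)=|Q^+(2n+1,q)|\sum_j A_j(T_j-1)C_j.\]

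The final step is the variance trick. Eliminating between the two equations, one obtains
\[|Q^+(2n+1,q)|\sum_i a_i^2-\left(\sum_i a_i\right)^2=|Q^+(2n+1,q)|^2\left(\sum_j A_j(T_j-1)C_j-H_2(H_2-1)\right),\]
so that everything hinges on the numerical identity $\sum_j A_j(T_j-1)C_j=H_2(H_2-1)$. Substituting the explicit values of $A_j$, $T_j$, $C_j$ and $H_2$ recorded above, this identity holds and the displayed expression vanishes. As the left-hand side is a non-negative multiple of the variance of the $a_i$ and vanishes only when they are all equal, we conclude that all $a_i$ coincide, their common value being $H_2$ by the first equation.

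The second assertion is proved identically. For a point $p\notin K$ let $b_p$ be the number of tangent hyperplanes through $p$. Running the same two counts, but now using that a tangent hyperplane contains $\frac{q^{2n+1}-1}{q-1}-H_2$ points outside $K$ and that a codimension $2$ subspace of type $C_j$ contains $\frac{q^{2n}-1}{q-1}-C_j$ such points, yields
\[\sum_{p\notin K} b_p=|Q^+(2n+1,q)|\left(\frac{q^{2n+1}-1}{q-1}-H_2\right)\]
\[\sum_{p\notin K} b_p(b_p-1)=|Q^+(2n+1,q)|\sum_j A_j(T_j-1)\left(\frac{q^{2n}-1}{q-1}-C_j\right),\]
and the same variance argument forces the $b_p$ to be constant, equal to $H_1$. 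I expect the only genuine work to be the bookkeeping verification that each variance expression collapses to $0$: this is routine but slightly lengthy algebra in the rational-in-$q$ quantities above, of exactly the kind dispatched by a computer algebra package elsewhere in this paper.
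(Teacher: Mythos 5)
Your proposal is correct and follows essentially the same route as the paper: the same two incidence counts (pairs $(p,\tau)$ and triples $(p,\tau_1,\tau_2)$ over tangent hyperplanes), the same variance trick to force the $a_i$ constant, and the same mirrored argument for points off $K$. The only difference is presentational: you make explicit the identity $\sum_j A_j(T_j-1)C_j=H_2(H_2-1)$, which the paper leaves implicit when it asserts that $\left|Q^+(2n+1,q)\right|\sum_i a_i^2-\bigl(\sum_i a_i\bigr)^2=0$.
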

\begin{proof}
We need to show that each point of $K$ is contained in $H_2$ tangent hyperplanes. Set $K=\{p_1,\cdots,p_{\left|Q^+(2n+1,q)\right|}\}$. Let $a_i$ denote the number of tangent hyperplanes containing the point $p_i$. Counting pairs $(p,\tau)$, $p\in K$, $p\in\tau$, $\tau$ a tangent hyperplane we obtain:
\begin{equation}\label{somHyp}
\sum_i a_i= |Q^+(2n+1,q)|H_2.
\end{equation}
Next we count triples $(p,\tau_1,\tau_2)$,  $p\in K$, $p\in\tau_i$, $\tau_i$ a tangent hyperplane, $i=1,2$.
As before $T_i$ denotes the number of tangent hyperplanes containing a fixed codimension $2$-space $\Pi$ of type $C_i$. 
We obtain the following equation.
\begin{equation}\label{somkwadHyp}
\sum_i a_i(a_i-1)= \left|Q^+(2n+1,q)\right|\left(\sum_{j} A_j(T_j-1)C_j \right).
\end{equation}

From Equations (\ref{somHyp}) and (\ref{somkwadHyp}) we can compute 
\[\left|Q^+(2n+1,q)\right|\sum_i a_i^2 - (\sum_i a_i)^2=0,\] 
from which we deduce that $a_i$ is a constant equal to $H_2$.

The second assertion is proved in a similar way.
\qed
\end{proof}

Denote by $\mathcal{H}$ the set of tangent hyperplanes of $K$.\\ Let $\delta:\mathrm{PG}(2n+1,q)\rightarrow \mathrm{PG}(2n+1,q)^D$ be any fixed chosen duality of $\mathrm{PG}(2n+1,q)$.

\begin{lemma}\label{char-hyp}
The set $K':=\mathcal{H}^\delta$ is the point set of a non-singular hyperbolic quadric $Q^+(2n+1,q)$ in $\mathrm{PG}(2n+1,q)^D$.
\end{lemma}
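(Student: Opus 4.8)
The plan is to run exactly the argument from the Hermitian case, but verifying the hypotheses of Theorem \ref{quadric} for $K'$, with the ambient dimension $2n+1$ playing the role of ``$n$'' there. First I would determine the line intersection numbers of $K'$. A line of $\mathrm{PG}(2n+1,q)^D$ is the image under $\delta$ of a codimension $2$ subspace $\Pi$ of $\mathrm{PG}(2n+1,q)$, and since $\delta$ reverses incidence, the points of $K'$ on $\Pi^\delta$ are precisely the images of the tangent hyperplanes through $\Pi$. By Lemma \ref{linesize} such a subspace lies in $0,1,2$ or $q+1$ tangent hyperplanes according to its type $C_1,C_2,C_3,C_4$, so every line of $\mathrm{PG}(2n+1,q)^D$ meets $K'$ in $0,1,2$ or $q+1$ points; that is, $K'$ is of type $(0,1,2,q+1)$.

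Next I would check that $K'$ is non-singular. Fix a point $p=\tau^\delta\in K'$, where $\tau$ is a tangent hyperplane. The lines of the dual space through $p$ correspond to the codimension $2$ subspaces of $\tau$, and such a line is a $2$-secant to $K'$ exactly when the corresponding subspace has type $C_3$, since $T_3=2$ by Lemma \ref{linesize}. By Lemma \ref{codim2hyp} each tangent hyperplane contains $A_3=q^{2n}>0$ subspaces of type $C_3$, so through every point of $K'$ there passes a $2$-secant, and hence no point of $K'$ is singular.

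Then I would compute $|K'|$ and verify the cardinality bounds. As $\delta$ is a bijection, $|K'|$ equals the number of tangent hyperplanes, which is $|Q^+(2n+1,q)|=\frac{q^{2n+1}-1}{q-1}+q^{n}$ by Lemma \ref{puntenHyp}. This value lies strictly between $\frac{q^{2n+1}-1}{q-1}$ and $\frac{q^{2n+2}-1}{q-1}$ (the gap between these two being $q^{2n+1}>q^{n}$), so the hypothesis $\frac{q^{(2n+1)+1}-1}{q-1}>|K'|\geq\frac{q^{2n+1}-1}{q-1}$ of Theorem \ref{quadric} is met, as are $2n+1\geq 4$ and $q>2$. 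Applying that theorem, $K'$ must fall into one of its three cases. Case (i) is excluded because the ambient dimension $2n+1$ is odd, while case (i) requires even dimension; case (iii) is excluded by cardinality, since its size $\frac{q^{2n+1}-1}{q-1}+1$ differs from $\frac{q^{2n+1}-1}{q-1}+q^{n}$ because $q^{n}>1$. Only case (ii) survives, and it precisely asserts that $K'$ is the point set of a non-singular $Q^+(2n+1,q)$.

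The work here is essentially bookkeeping rather than a genuine obstacle: the one identity worth checking carefully is that $|Q^+(2n+1,q)|$ really equals the exceptional cardinality of case (ii), i.e. $\frac{(q^{n+1}-1)(q^{n}+1)}{q-1}=\frac{q^{2n+1}-1}{q-1}+q^{n}$, after which the parity and cardinality tests cleanly eliminate the two unwanted alternatives. Everything else transfers verbatim from the Hermitian treatment in Lemma \ref{char-herm}.
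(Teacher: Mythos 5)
Your proposal is correct and takes essentially the same route as the paper: the paper's proof is a one-line appeal to Lemma \ref{linesize}, Lemma \ref{codim2hyp} and Theorem \ref{quadric}, and your write-up simply makes explicit the details that make that appeal work (type $(0,1,2,q+1)$ via the $T_i$, non-singularity via $A_3=q^{2n}>0$ together with $T_3=2$, and the cardinality and parity checks that force case (ii) of Theorem \ref{quadric}).
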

\begin{proof}
This is an immediate consequence of Lemma \ref{linesize}, Lemma \ref{codim2hyp} and Theorem \ref{quadric}.\qed
\end{proof}
\begin{theorem}\label{hyperbolic}
The set $K$ is the point set of a non-singular hyperbolic quadric $Q^+(2n+1,q)$ in $\mathrm{PG}(2n+1,q)$.
\end{theorem}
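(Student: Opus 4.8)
The plan is to mirror the double-dualization argument already used for the Hermitian case in Theorem \ref{theoremHermitian}, but invoking the hyperbolic counterparts of the lemmas. By Lemma \ref{char-hyp} we already know that $K' := \mathcal{H}^\delta$ is the point set of a non-singular $Q^+(2n+1,q)$ in the dual space $\mathrm{PG}(2n+1,q)^D$. A hyperbolic quadric has precisely the intersection numbers $H_1,H_2$ with hyperplanes and $C_1,\dots,C_4$ with codimension-$2$ subspaces that were assumed for $K$, so $K'$ satisfies exactly the same hypotheses as $K$. Hence every lemma established above applies verbatim with $K'$ in the role of $K$.

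The central step is to recognize the tangent hyperplanes of $K'$ once they are pulled back to the original space. A hyperplane $\beta$ of $\mathrm{PG}(2n+1,q)^D$ corresponds under $\delta^{-1}$ to a point $b=\beta^{\delta^{-1}}$ of $\mathrm{PG}(2n+1,q)$, and because $\delta$ preserves incidence, a point $P\in K'$ lies on $\beta$ exactly when the tangent hyperplane $P^{\delta^{-1}}$ of $K$ passes through $b$. Thus the number of points of $K'$ on $\beta$ equals the number of tangent hyperplanes of $K$ through $b$, which by Lemma \ref{correspondence} is $H_2$ when $b\in K$ and $H_1$ when $b\notin K$. Therefore $\beta$ meets $K'$ in $H_2$ points (i.e. is tangent to $K'$) if and only if $b\in K$, so $\delta^{-1}$ sends the tangent hyperplanes of $K'$ bijectively onto the points of $K$.

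To finish I would apply Lemma \ref{char-hyp} to the set $K'$, using the duality $\delta^{-1}$: it guarantees that the dualized tangent hyperplanes of $K'$ form a non-singular $Q^+(2n+1,q)$. By the identification of the previous paragraph this point set is exactly $K$, and the theorem follows.

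The argument is essentially formal given the preceding lemmas; the only place requiring care is the bookkeeping in the middle paragraph, where one must track which duality acts on which space so that the incidence equivalence $P\in\beta \Leftrightarrow b\in P^{\delta^{-1}}$ correctly turns tangency of $\beta$ at $K'$ into membership of $b$ in $K$. There is no real geometric obstacle here, only the need to apply the counting lemmas for $K'$ consistently and to keep $\delta$ and $\delta^{-1}$ straight.
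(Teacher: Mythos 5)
Your proposal is correct and follows essentially the same route as the paper: both apply Lemma \ref{char-hyp} to see that $K'$ satisfies the same hypotheses as $K$, use the incidence-reversing property of $\delta^{-1}$ together with Lemma \ref{correspondence} to identify the tangent hyperplanes of $K'$ with the points of $K$, and then reapply Lemma \ref{char-hyp} in the dual direction. Your middle paragraph merely spells out the incidence bookkeeping that the paper leaves implicit.
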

\begin{proof}
Clearly $K'$ is a point set satisfying the same conditions on intersections with hyperplanes and subspaces of codimension $2$ as $K$ in $\mathrm{PG}(2n+1,q)$. Since the tangent hyperplanes at $K'$ are exactly those hyperplanes containing $H_2$ points of $K'$, we find that if we apply the duality $\delta^{-1}$ to $\mathrm{PG}(2n+1,q)^D$, the tangent hyperplanes of $K'$ are mapped bijectively to the points of $K$ (see Lemma \ref{correspondence}). Hence we can now apply Lemma \ref{char-hyp} with $K'$ in $\mathrm{PG}(2n+1,q)^D$ replaced by $K$ in $\mathrm{PG}(2n+1,q)$. 
We conclude that $K$ is the point set of a $Q^+(2n+1,q)$.\qed
\end{proof}

\subsection{Elliptic quadrics}

First of all let us recall what the intersections of a $Q^-(2n+1,q)$ with hyperplanes and spaces of codimension 2 look like. A hyperplane of $\mathrm{PG}(2n+1,q)$ can intersect a $Q^-(2n+1,q)$ either in a $Q(2n,q)$ or in a cone $pQ^-(2n-1,q)$. 

A space of codimension 2 can intersect a $Q^-(2n+1,q)$ either in a \\ $Q^+(2n-1,q)$, a cone $pQ(2n-2,q)$, a $Q^-(2n-1,q)$ or a cone $LQ^-(2n-3,q)$.

So the intersection numbers with hyperplanes are
$$H_1=\frac{q^{2n}-1}{q-1},H_2=1+q\frac{(q^n+1)(q^{n-1}-1)}{q-1}.$$

The intersection numbers with spaces of codimension 2 are 
$$\:C_1={\frac { \left( {q}^{n}-1 \right)  \left( {q}^{n-1}+1 \right) }{q-1}},\:C_2=1+{q\frac {\left( {q}^{2\,n-2}-1 \right) }{q-1}},$$
$$C_3={\frac { \left( {q}^{n}+1 \right)  \left( {q}^{n-1}-1 \right) }{q-1}},\:C_4=1+q+{q^2\frac { \left( {q}^{n-1}+1 \right)  \left( {q}^{n-2}-1
 \right) }{q-1}}.$$ 

From now on let $K$ be a set of points in $\mathrm{PG}(2n+1,q)$, $n\geq2$ having the same intersection numbers with hyperplanes and codimension $2$-spaces as a $Q^-(2n+1,q)$. We want to prove that $K$ is the point set of a non-singular elliptic quadric. We will call subspaces intersecting $K$ in a given
number $m$ of points, {\em subspaces of type $m$}. For obvious reasons a hyperplane intersecting
$K$ in $H_{2}$ points will also be called a {\em tangent hyperplane}.

\begin{lemma}
The set $K$ has size $|Q^-(2n+1,q)|$. Furthermore, there are $|Q^-(2n+1,q)|$ tangent hyperplanes. 
\end{lemma}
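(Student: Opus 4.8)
The plan is to mirror the double-counting argument already used in the hyperbolic and Hermitian cases (Lemma \ref{puntenHyp}). First I would set $x=|K|$ and let $h_1$ denote the number of hyperplanes of type $H_1$; recall that $\mathrm{PG}(2n+1,q)$ contains $\frac{q^{2n+2}-1}{q-1}$ hyperplanes, that every point lies on $\frac{q^{2n+1}-1}{q-1}$ of them, and that every pair of distinct points lies on $\frac{q^{2n}-1}{q-1}$ of them. Counting incident pairs $(p,\alpha)$ with $p\in K\cap\alpha$, and incident triples $(p,r,\alpha)$ with $p\neq r$ in $K\cap\alpha$, then gives
\[
h_1 H_1+\left(\frac{q^{2n+2}-1}{q-1}-h_1\right)H_2=x\,\frac{q^{2n+1}-1}{q-1},
\]
\[
h_1 H_1(H_1-1)+\left(\frac{q^{2n+2}-1}{q-1}-h_1\right)H_2(H_2-1)=x(x-1)\,\frac{q^{2n}-1}{q-1}.
\]
Solving the first relation for $h_1$ and substituting into the second produces a quadratic in $x$, one of whose roots is $x_1=|Q^-(2n+1,q)|$; the other root $x_2$ is an explicit (if unwieldy) rational expression in $q$ and $n$ that a computer algebra package returns at once.

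The substantive step is to exclude $x=x_2$. Here I would use the codimension-2 data. Every codimension-2 space $\Pi$ has one of the four types $C_1,\dots,C_4$ and lies in exactly $q+1$ hyperplanes, which cover every point outside $\Pi$ exactly once. Writing $k_i$ for the number of non-tangent hyperplanes through a $\Pi$ of type $C_i$, a single point count yields
\[
C_i+k_i(H_1-C_i)+(q+1-k_i)(H_2-C_i)=x_2 .
\]
Solving for $k_i$ for each $i\in\{1,2,3,4\}$ and checking (again with a CAS) that none of the four resulting expressions is a natural number for $n\geq2$ gives the contradiction, since any codimension-2 space of $\mathrm{PG}(2n+1,q)$ must have one of these four types. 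Hence $x=|Q^-(2n+1,q)|$.

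The second assertion is then immediate: substituting $x=|Q^-(2n+1,q)|$ back into the first relation determines $h_1$, whence the number of tangent hyperplanes $\frac{q^{2n+2}-1}{q-1}-h_1$ evaluates to $|Q^-(2n+1,q)|$. The only genuine obstacle I anticipate is the bookkeeping in the elimination of $x_2$: since the elliptic intersection numbers differ from the hyperbolic ones essentially by sign changes in the $(q^{n}\pm1)$ factors, one must verify carefully that each of the four $k_i$ really fails to be a natural number rather than collapsing to an admissible value. This sign sensitivity is precisely what the authors flag as making the elliptic (and later parabolic) case more delicate, though for this counting lemma the computation remains routine.
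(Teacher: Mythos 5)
Your proposal is correct and follows essentially the same route as the paper: the paper's own proof of this lemma is simply ``proved in a similar way as Lemma \ref{puntenHyp}'', and your argument is exactly that adaptation --- the two counting equations in $x$ and $h_1$, the quadratic whose spurious root $x_2$ is excluded by showing the number $k_i$ of non-tangent hyperplanes through a codimension-$2$ space of type $C_i$ fails to be a natural number, and the final substitution giving the count of tangent hyperplanes. Your closing caution about verifying the non-integrality of the four $k_i$ under the elliptic sign changes is exactly the bookkeeping the paper leaves implicit.
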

\begin{proof}
This is proved in a similar way as Lemma \ref{puntenHyp}.\qed
\end{proof}
\begin{lemma}\label{linesize-ell}
Through a space of codimension 2 of type $C_1,C_2,C_3,C_4$ there pass respectively $T_1=0,\:T_2=1,\:T_3=2$ and $T_4=q+1$ tangent hyperplanes.
\end{lemma}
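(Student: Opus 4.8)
The plan is to mimic the proof of the hyperbolic analogue, Lemma~\ref{linesize}, since the only ingredient that changes between the two cases is the explicit arithmetic. Fix a codimension~$2$ space $\Pi$ meeting $K$ in $C_i$ points, and recall that in $\mathrm{PG}(2n+1,q)$ exactly $q+1$ hyperplanes contain $\Pi$; each of these is either tangent (type $H_2$) or non-tangent (type $H_1$), and I let $T_i$ denote the number of tangent ones among them.

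The key step is a single incidence count. Every point of $K$ lying in $\Pi$ is contained in all $q+1$ hyperplanes through $\Pi$, whereas every point of $K$ outside $\Pi$ is contained in exactly one of them, namely the hyperplane it spans together with $\Pi$. Partitioning $K$ in this way and summing the contributions of the $T_i$ tangent and the $q+1-T_i$ non-tangent hyperplanes yields
$$C_i+T_i(H_2-C_i)+(q+1-T_i)(H_1-C_i)=|K|.$$
Since $|K|=|Q^-(2n+1,q)|$ by the previous lemma, and $H_1,H_2$ together with each $C_i$ are known explicitly, this is one linear equation in the single unknown $T_i$, and hence has a unique solution for each $i$.

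It then remains only to substitute the elliptic values $H_1=\frac{q^{2n}-1}{q-1}$ and $H_2=1+q\frac{(q^n+1)(q^{n-1}-1)}{q-1}$, together with the four values $C_1,C_2,C_3,C_4$, and to solve for $i=1,2,3,4$; this returns $T_1=0$, $T_2=1$, $T_3=2$, $T_4=q+1$. I do not expect any genuine obstacle: the argument is structurally identical to the hyperbolic case and the final step is a routine simplification best left to a computer algebra package. The one point worth flagging is that, because the sign pattern in the $C_i$ differs from the hyperbolic case, one should confirm that each $T_i$ still emerges as a nonnegative integer in the admissible range $0\le T_i\le q+1$, which the computed values indeed satisfy.
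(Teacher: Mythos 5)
Your proposal is correct and coincides with the paper's own argument: the paper proves this lemma by declaring it ``completely analogous'' to Lemma~\ref{linesize}, whose proof is exactly your counting identity $C_i+T_i(H_2-C_i)+(q+1-T_i)(H_1-C_i)=|K|$ solved for $T_i$ with the elliptic parameters. Your extra remark about checking that the solutions are integers in $[0,q+1]$ is a sensible sanity check but adds nothing beyond what the paper does.
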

\begin{proof}
This is completely analogous to the proof of Lemma \ref{linesize}.
\qed
\end{proof}

\begin{lemma}
Each tangent hyperplane contains $A_i$ subspaces of codimension $2$ intersecting $K$ in $C_{i}$ points, where 
\[A_{1}=0, \mathrm{\ } A_{2}=q^{n-1}(q^n+1), \mathrm{\ } A_{3}=q^{2n},\]
\[A_4=\frac{(q^n+1)(q^{n-1}-1)}{q-1}.\]

\end{lemma}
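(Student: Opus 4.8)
The plan is to mimic exactly the counting argument used in the hyperbolic case (Lemma \ref{codim2hyp}), since the structure of the proof carries over verbatim; only the specific values of $H_2$ and the $C_i$ differ. First I would fix an arbitrary tangent hyperplane $\Pi$ (a hyperplane meeting $K$ in $H_2$ points) and let $A_i$ denote the number of codimension $2$ subspaces of type $C_i$ contained in $\Pi$. By the preceding Lemma \ref{linesize-ell}, a codimension $2$ space of type $C_1$ lies on $T_1=0$ tangent hyperplanes, so in particular $\Pi$ contains no subspace of type $C_1$, giving $A_1=0$ immediately.

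Next I would set up the same system of three linear equations in the three remaining unknowns $A_2,A_3,A_4$. The first equation simply counts all codimension $2$ subspaces inside the hyperplane $\Pi\cong\mathrm{PG}(2n,q)$:
\[
\sum_{i=2}^{4}A_i=\frac{q^{2n+1}-1}{q-1}.
\]
The second is obtained by double counting the incident pairs $(p,\Delta)$ with $p$ a point of $K$ lying in a codimension $2$ subspace $\Delta\subset\Pi$: since $\Pi$ contains $H_2$ points of $K$ and each such point lies in $\frac{q^{2n}-1}{q-1}$ codimension $2$ subspaces of $\Pi$, this yields
\[
\sum_{i=2}^{4}A_iC_i=H_2\,\frac{q^{2n}-1}{q-1}.
\]
The third arises from double counting the triples $(p,r,\Delta)$ with $p\neq r$ two points of $K$ both in $\Delta\subset\Pi$, giving
\[
\sum_{i=2}^{4}A_iC_i(C_i-1)=H_2(H_2-1)\,\frac{q^{2n-1}-1}{q-1}.
\]

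Finally I would solve this $3\times 3$ linear system for $A_2,A_3,A_4$. Because the coefficient matrix is a Vandermonde-type matrix in the distinct values $C_2,C_3,C_4$, it is invertible and the solution is unique; plugging in the explicit elliptic values of $H_2$ and the $C_i$ listed above and simplifying should produce precisely $A_2=q^{n-1}(q^n+1)$, $A_3=q^{2n}$, and $A_4=\frac{(q^n+1)(q^{n-1}-1)}{q-1}$. I do not expect any genuine obstacle here: the argument is purely a double-counting computation identical in form to Lemma \ref{codim2hyp}, and the only real work is the algebraic simplification, which is routine (and, as the authors note elsewhere, easily checked with a computer algebra package). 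The one point deserving a line of care is the justification that $A_1=0$ before reducing to a three-variable system, but this is handed to us directly by Lemma \ref{linesize-ell}.
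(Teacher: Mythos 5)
Your proposal is correct and follows exactly the paper's argument: the paper proves this lemma by declaring it ``similar to the proof of Lemma \ref{codim2hyp}'' (the hyperbolic case), which is precisely the computation you carry out --- deduce $A_1=0$ from $T_1=0$ in Lemma \ref{linesize-ell}, then solve the same three double-counting equations for $A_2,A_3,A_4$. Your added remark that the coefficient matrix is Vandermonde-type in the distinct values $C_2,C_3,C_4$ (hence invertible) is a small but welcome justification the paper leaves implicit.
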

\begin{proof}
The proof is similar to the proof of Lemma \ref{codim2hyp}.\qed
\end{proof}

\begin{lemma}\label{correspondence-ell} 
Each point of $K$ is contained in $H_2$ tangent hyperplanes, while each point not in $K$ is contained in $H_1$ tangent hyperplanes.
\end{lemma}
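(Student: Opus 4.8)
The plan is to mimic the argument used for the hyperbolic case in Lemma~\ref{correspondence}, since all the needed ingredients are now in hand: the count $|Q^-(2n+1,q)|$ of tangent hyperplanes, the numbers $T_i$ from Lemma~\ref{linesize-ell}, and the numbers $A_i$ from the previous lemma. Write $K=\{p_1,\dots,p_{|Q^-(2n+1,q)|}\}$ and let $a_i$ denote the number of tangent hyperplanes through $p_i$. The whole argument rests on two double counts followed by the variance trick.

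First I would count incident pairs $(p,\tau)$ with $p\in K$ lying on a tangent hyperplane $\tau$. Since there are $|Q^-(2n+1,q)|$ tangent hyperplanes, each meeting $K$ in $H_2$ points, this gives
\[\sum_i a_i=|Q^-(2n+1,q)|\,H_2.\]
Next I would count ordered triples $(p,\tau_1,\tau_2)$ with $p\in K$ on two distinct tangent hyperplanes $\tau_1\neq\tau_2$, routing the count through the codimension~$2$ space $\tau_1\cap\tau_2$ and using the values $T_i$ and $A_i$. This yields
\[\sum_i a_i(a_i-1)=|Q^-(2n+1,q)|\left(\sum_j A_j(T_j-1)C_j\right).\]

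The decisive step is then to verify the identity
\[|Q^-(2n+1,q)|\sum_i a_i^2-\left(\sum_i a_i\right)^2=0,\]
which follows by substituting the explicit values of $A_j$, $T_j$, $C_j$, $H_2$ and $|Q^-(2n+1,q)|$ into the right-hand sides of the two counts above and checking that the combination vanishes. Once this holds, the variance trick forces every $a_i$ to equal the mean $\sum_i a_i/|Q^-(2n+1,q)|=H_2$, which is the first assertion. I expect this algebraic verification to be the only genuine obstacle: it is a routine but lengthy polynomial identity in $q$, best confirmed with a computer algebra package, exactly as in the hyperbolic case. The elliptic values differ from the hyperbolic ones only in a few signs, so I would expect the identity to go through with no structural change.

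For the second assertion I would run the identical scheme over the points \emph{not} in $K$. There are $\frac{q^{2n+2}-1}{q-1}-|Q^-(2n+1,q)|$ of them, and each tangent hyperplane contains $\frac{q^{2n+1}-1}{q-1}-H_2$ such points; letting $b_j$ be the number of tangent hyperplanes through an external point, the analogous pair-count and triple-count together with the variance trick pin down each $b_j$ to the constant value $H_1$.
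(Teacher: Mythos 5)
Your proposal is correct and follows exactly the same route as the paper: the paper's proof of this lemma simply states that it is ``proved as in Lemma~\ref{correspondence}'', which is precisely the pair-count, the triple-count routed through the codimension-$2$ intersection $\tau_1\cap\tau_2$ using the values $A_j$, $T_j$, $C_j$, and the variance trick that you reproduce with the elliptic parameters. Your treatment of the second assertion (running the same scheme over points not in $K$) also matches the paper's ``proved in a similar way''.
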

\begin{proof}
This is proved as in Lemma \ref{correspondence}.\qed
\end{proof}

Denote by $\mathcal{H}$ the set of tangent hyperplanes of $K$.\\ Let $\delta:\mathrm{PG}(2n+1,q)\rightarrow \mathrm{PG}(2n+1,q)^D$ be any fixed chosen duality of $\mathrm{PG}(2n+1,q)$.

By Lemma \ref{linesize-ell} the set $K':=\mathcal{H}^\delta$ is a $(0,1,2,q+1)$-set in $\mathrm{PG}(2n+1,q)^D$. We want to show that $K'$ is the point set of an elliptic quadric. 
Notice however that $K'$ does not satisfy the conditions of Theorem \ref{quadric}. By Lemma \ref{correspondence-ell} the intersection numbers of  $K'$ with respect to hyperplanes are $H_1$ and $H_2$.

We define a point-line geometry $S$, with point set $K'$ and line set those lines of  $\mathrm{PG}(2n+1,q)^D$ intersecting $K'$ in $q+1$ points (the incidence is the natural one). 
\begin{theorem}
The geometry $S$ is a Shult space such that no point of $S$ is collinear with all other points of $S$.
\end{theorem}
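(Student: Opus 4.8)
The plan is to verify the three features a Shult space must have: that $S$ is a thick partial linear space, that it satisfies the one-or-all axiom, and that it is non-degenerate. Two of these are immediate. Every line of $S$ is a $(q+1)$-secant of $K'$, and $q+1>2$ since $q>2$, so lines are thick; two distinct lines of $S$ are distinct lines of $\mathrm{PG}(2n+1,q)^D$ and hence meet in at most one point. For the constant point-degree I would run a first- and second-moment count of the full lines through a point, in the spirit of Lemma \ref{correspondence-ell}, to show that each point lies on the same number $\frac{(q^{n-1}-1)(q^n+1)}{q-1}>2$ of full lines.

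Non-degeneracy is clean. Suppose some $p\in K'$ were collinear in $S$ with every other point of $K'$. Then every point of $K'\setminus\{p\}$ lies on a full line through $p$, so $K'$ is exactly the union of the full lines through $p$; let $f$ be their number, so that $f=(|K'|-1)/q$. A hyperplane $\alpha$ not containing $p$ meets each of these lines in exactly one point, and these points are distinct, so $|K'\cap\alpha|=f$. But by Lemma \ref{correspondence-ell} (dualized) every hyperplane meets $K'$ in $H_1$ or $H_2$ points, and a direct computation with $|K'|=|Q^-(2n+1,q)|$ shows $f=(|K'|-1)/q$ is neither $H_1$ nor $H_2$, a contradiction. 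Hence no point is collinear with all others.

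The heart is the one-or-all axiom, which I would reduce to a planar statement. Given an antiflag $(p,L)$, work in the plane $\pi=\langle p,L\rangle$. For $x\in L$ the line $px$ contains the two $K'$-points $p$ and $x$, so by the $(0,1,2,q+1)$-property it is a $2$-secant or a full line, and $x$ is collinear with $p$ precisely when $px$ is full. Writing $m$ for the number of such full lines and partitioning $\pi\setminus\{p\}$ by the pencil at $p$ (each of whose $q+1$ lines meets $L$) gives $|K'\cap\pi|=q+2+m(q-1)$. If $m\ge 3$, the three concurrent full lines force every line of $\pi$ missing $p$ to meet $K'$ in at least three, hence in $q+1$, points, so $\pi\subseteq K'$ and $m=q+1$. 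If $m=2$, then $K'\cap\pi$ is exactly the triangle $L,px_1,px_2$, and each of the $(q-1)^2>0$ lines of $\pi$ avoiding the vertices $p,x_1,x_2$ meets it in exactly three points, contradicting $q>2$. Thus $m\in\{0,1,q+1\}$, and the axiom holds as soon as $m=0$ is excluded.

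The main obstacle is precisely ruling out $m=0$, that is, ruling out a plane meeting $K'$ in exactly $q+2$ points; here the configuration $L\cup\{p\}$ is a perfectly legal $(0,1,2,q+1)$-planar set, so the line-type condition alone cannot see it. I would eliminate it by a global count in the pencil of hyperplanes through $\pi$: there are $\frac{q^{2n-1}-1}{q-1}$ such hyperplanes, each meeting $K'$ in $H_1$ or $H_2$ points, and each point of $K'$ off $\pi$ lies in $\frac{q^{2n-2}-1}{q-1}$ of them; counting incident (external point, hyperplane) pairs, together with the analogous count of pairs of points, expresses the number of $H_1$-hyperplanes through $\pi$ as a function of $w=|K'\cap\pi|$, and the claim is that $w=q+2$ forces this number out of its admissible range, leaving only the values $w\in\{2q+1,\,q^2+q+1\}$ corresponding to $m\in\{1,q+1\}$. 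This exclusion of the degenerate plane size is exactly where the elliptic case is more delicate than the hyperbolic and Hermitian ones, in which Theorem \ref{quadric} (respectively Theorem \ref{Hermitian}) applied directly to $K'$.
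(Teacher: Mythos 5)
Your planar reduction of the one-or-all axiom to $m\in\{0,1,q+1\}$ is correct, and your non-degeneracy argument is valid and in fact more self-contained than the paper's (which invokes hyperplane sections). But the step you yourself call the heart — ruling out $m=0$, i.e.\ a plane $\pi$ with $w=|K'\cap\pi|=q+2$ — does not work as proposed, and this is a genuine gap. Writing $\theta_k=\frac{q^k-1}{q-1}$, your pencil count reads $h(H_1-w)+(\theta_{2n-1}-h)(H_2-w)=(|K'|-w)\,\theta_{2n-2}$, where $h$ is the number of $H_1$-hyperplanes through $\pi$; using $H_1-H_2=q^n$ and $|K'|\theta_{2n-2}-H_2\theta_{2n-1}=q^{2n-2}(q^n-q-1)$, this determines $h=q^{n-2}(q^n-q-1+w)$ exactly. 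For $w=q+2$ this gives $h=q^{2n-2}+q^{n-2}$, a non-negative integer smaller than $\theta_{2n-1}$ — no contradiction; indeed this count excludes no value of $w$ at all. Your fallback, the ``analogous count of pairs of points,'' is not available either: the number of hyperplanes containing $\pi$ and two given points of $K'\setminus\pi$ equals $\theta_{2n-2}$ or $\theta_{2n-3}$ according as the two points do or do not lie in a common solid through $\pi$, so evaluating that sum requires the distribution of $K'$ in solids, which the hypotheses (hyperplanes and codimension $2$ only) do not control.

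The paper uses the very same pencil count but only to conclude $h>0$: assuming every hyperplane through $\pi$ is tangent forces the identity $\left(\left|Q^-(2n+1,q)\right|-q-2\right)\theta_{2n-2}=\theta_{2n-1}(H_2-q-2)$, whose two sides differ by $q^{2n-2}(q^n+1)$, a contradiction. Hence $\pi$ lies in some hyperplane $\Pi$ of type $H_1$; then $K'\cap\Pi$ is a $(0,1,2,q+1)$-set of size $\frac{q^{2n}-1}{q-1}$ in $\Pi\cong\mathrm{PG}(2n,q)$, so Theorem \ref{quadric} identifies it as a non-singular parabolic quadric, inside which $p$ is collinear with at least one point of $L$; this is what kills $m=0$. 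So Theorem \ref{quadric} does carry the elliptic case — applied to hyperplane sections of $K'$ rather than to $K'$ itself, which is exactly the move your closing sentence dismisses. A secondary problem of the same kind: your first- and second-moment plan for the constant point degree is not executable, since the second moment requires the number of pairs of concurrent full lines, again not determined by the hypotheses. The paper gets constancy for free \emph{after} the one-or-all axiom: the pencils of lines at two non-collinear points are in bijection, and any two collinear points admit a common non-collinear point, found by varying the plane through their joining line.
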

\begin{proof}
Consider a point $p$ of $S$ and a line $L$ of $S$, such that $p$ and $L$ are not incident. Consider the plane $\alpha$ generated by $p$ and $L$.

If this plane contains another point $r$ of $S$, then the fact that  $K'$ is a $(0,1,2,q+1)$-set implies that $\alpha$ intersects $S$ either in two intersecting lines or is fully contained in $S$ (notice that $q>2$ is necessary here). In both cases the ``1-or-all axiom'' holds.

Next suppose that $\alpha$ intersects $S$ only in $p$ and $L$.
Assume that $\alpha$ would not be contained in a hyperplane of type $H_1$. We count pairs $(u,H)$, with $u$ a point of  $S$ not in $\alpha$ and $H$ a hyperplane containing $u$ and $\alpha$. We obtain
\[\left(\left|Q^-(2n+1,q)\right|-q-2\right)\frac{q^{2n-2}-1}{q-1}=\frac{q^{2n-1}-1}{q-1}(H_2-q-2),\]
a contradiction.
Hence $\alpha$ is contained in at least one hyperplane $\Pi$ of type $H_1$. By Theorem \ref{quadric} the  $(0,1,2,q+1)$-set $K'\cap\Pi$ is the point set of a non-singular parabolic quadric. We conclude that in $S$ the point $p$ is collinear with $1$ or all points of $L$. 
Since each point of $S$ is contained in at least one hyperplane of type $H_1$ no point of $S$ is collinear with all other points of $S$.
We still have to proof that through every point there passes a constant number of lines in order to conclude that $S$ is a Shult space (using our definition from the introduction of Shult-space which is slightly different from the original one). 
Consider first two non-collinear points $p$ and $r$. Then for every line containing $p$ there is exactly one line through $r$. Hence through $p$ and $r$ there pass the same number of lines. Next consider two collinear points $p$ and $r$. 
Considering all planes containing the line $pr$, it is clear that one can find a point $s$ which is both non-collinear with $p$ and $r$. By the  above,  there pass equally many lines through $p$ and $r$.
\qed
\end{proof}

\begin{corollary}\label{char-ell}
The point set $K'$ forms the point set of a non-singular elliptic quadric $Q^-(2n+1,q)$.
\end{corollary}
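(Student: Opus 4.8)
The plan is to recognize $K'$ as a \emph{fully embedded} Shult space and invoke Theorem~\ref{BL}, after which the precise intersection data will pin down the elliptic quadric among all classical polar spaces. First I would verify that the geometry $S$ is fully embedded in $\mathrm{PG}(2n+1,q)^D$ in the sense required by Theorem~\ref{BL}. By construction the lines of $S$ are genuine lines of the projective space (the $(q+1)$-secants to $K'$), and every point lying on such a secant belongs to $K'$; so it remains only to check that no point of $K'$ is isolated, that is, that each point of $K'$ lies on at least one $(q+1)$-secant. This was essentially already established in the preceding proof: each point of $S$ lies in some hyperplane $\Pi$ of type $H_1$, and by Theorem~\ref{quadric} the set $K'\cap\Pi$ is a non-singular parabolic quadric $Q(2n,q)$, every point of which lies on generators, i.e.\ on lines entirely contained in $K'$. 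Hence the point set of $S$ is exactly the set of points covered by its lines, and $S$ is fully embedded.

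Having settled this, the preceding theorem guarantees that $S$ is a Shult space in which no point is collinear with all other points, so Theorem~\ref{BL} applies and shows that $K'$ is precisely the point set of a finite classical polar space, fully embedded in $\mathrm{PG}(2n+1,q)^D$. The hypothesis that no point is collinear with all others forces this polar space to be non-singular, since the vertex of a singular cone would be collinear with every point.

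It then remains to identify which classical polar space occurs, and here I would exploit that $K'$ is a $(0,1,2,q+1)$-set. This at once excludes the symplectic space $W_{2n+1}(q)$, whose point set is all of $\mathrm{PG}(2n+1,q)$, and the Hermitian variety, whose secant lines meet it in $1$, $\sqrt{q}+1$ or $q+1$ points with $\sqrt{q}+1>2$. Thus $K'$ is a non-singular quadric, and as the ambient dimension $2n+1$ is odd the only candidates are $Q^+(2n+1,q)$ and $Q^-(2n+1,q)$. These have different cardinalities (and both exceed the size of any quadric living in a proper subspace, so $K'$ cannot be degenerately embedded); since $|K'|=|Q^-(2n+1,q)|$ by the first lemma of this subsection, which counts the tangent hyperplanes, $K'$ must be an elliptic quadric $Q^-(2n+1,q)$.

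The main obstacle is the full-embedding verification. Theorem~\ref{BL} fails without the guarantee that every point of $K'$ genuinely lies on a full $(q+1)$-secant rather than only on $0$-, $1$- or $2$-secants, and this is exactly the point at which the parabolic-quadric structure of the type-$H_1$ hyperplane sections, furnished by Theorem~\ref{quadric}, does the essential work. Once embedding and the exclusion of the non-quadratic polar spaces are in hand, the cardinality comparison that isolates $Q^-$ from $Q^+$ is routine.
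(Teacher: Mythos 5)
Your proposal is correct and takes essentially the same route as the paper: the paper's own proof consists precisely of combining the previous theorem (the Shult space property of $S$), Theorem~\ref{BL}, and the count $\left|K'\right|=\left|Q^-(2n+1,q)\right|$. The extra verifications you supply---the full embedding of $S$ via the generators of the parabolic quadric sections $K'\cap\Pi$ for hyperplanes $\Pi$ of type $H_1$, and the explicit exclusion of the symplectic, Hermitian and hyperbolic alternatives---are exactly the details the paper leaves implicit in calling the corollary an ``immediate consequence''.
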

\begin{proof}
This is an immediate consequence of $\left|K'\right|=\left|Q^-(2n+1,q)\right|$, the previous lemma and Theorem \ref{BL}.\qed
\end{proof}

\begin{theorem}\label{elliptic}
The set $K$ is the point set of a non-singular elliptic quadric $Q^-(2n+1,q)$ in $\mathrm{PG}(2n+1,q)$.
\end{theorem}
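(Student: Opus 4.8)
The plan is to close the elliptic case by repeating verbatim the duality argument that finished the Hermitian and hyperbolic cases (Theorems \ref{theoremHermitian} and \ref{hyperbolic}), now fed by Corollary \ref{char-ell}. The essential observation is that the configuration consisting of $K$ together with its set of tangent hyperplanes $\mathcal{H}$ is symmetric under the duality $\delta$: having identified the dual of $\mathcal{H}$ as an elliptic quadric, one inverts $\delta$ to transport this information back to a statement about $K$ itself.

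First I would record that $K':=\mathcal{H}^\delta$, which by Corollary \ref{char-ell} is the point set of a non-singular $Q^-(2n+1,q)$ in $\mathrm{PG}(2n+1,q)^D$, automatically carries exactly the same intersection numbers with respect to hyperplanes and subspaces of codimension $2$ as $K$ does, these being the intersection numbers of an elliptic quadric in both instances. Consequently the entire chain of lemmas proved for $K$ applies word for word to $K'$; in particular Lemma \ref{correspondence-ell} holds for $K'$.

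Next I would identify the tangent hyperplanes of $K'$ as precisely those hyperplanes of $\mathrm{PG}(2n+1,q)^D$ meeting $K'$ in $H_2$ points, and then apply the inverse duality $\delta^{-1}$. Since a duality sends hyperplanes to points, the tangent hyperplanes of $K'$ are carried to a set of points of $\mathrm{PG}(2n+1,q)$. By Lemma \ref{correspondence-ell}, which asserts that a point lies in $H_2$ tangent hyperplanes exactly when it belongs to $K$ and in $H_1$ of them otherwise, this image set is exactly $K$. Thus $\delta^{-1}$ maps the tangent-hyperplane set of $K'$ bijectively onto $K$.

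Finally, because $K'$ satisfies all the hypotheses that $K$ does, I would reapply Corollary \ref{char-ell} with the roles of $K$ and $K'$ interchanged: the dual of the tangent-hyperplane set of $K'$ is the point set of a non-singular elliptic quadric, and that dual is $K$. Hence $K$ is the point set of a $Q^-(2n+1,q)$. The real content of the argument was already carried out in Corollary \ref{char-ell} via the Shult-space machinery, so this theorem is essentially a formal wrap-up; the only point demanding care rather than routine bookkeeping is verifying that the correspondence ``tangent hyperplanes of $K'$ $\leftrightarrow$ points of $K$'' is a genuine bijection compatible with $\delta$. This is exactly what Lemma \ref{correspondence-ell}, used in both directions, supplies, and it is what guarantees that the characterization of $K'$ transfers into a characterization of $K$ rather than of some unrelated point set.
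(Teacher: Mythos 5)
Your proposal is correct and follows exactly the paper's own argument: observe that $K'$ satisfies the same hypotheses as $K$, use Lemma \ref{correspondence-ell} to see that $\delta^{-1}$ maps the tangent hyperplanes of $K'$ bijectively onto the points of $K$, and then reapply Corollary \ref{char-ell} with the roles of $K$ and $K'$ interchanged. The only difference is that you spell out the incidence-reversal bookkeeping more explicitly than the paper does, which is a matter of exposition rather than substance.
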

\begin{proof}
Clearly $K'$ is a point set satisfying the same conditions on intersections with hyperplanes and subspaces of codimension $2$ as $K$ in $\mathrm{PG}(2n+1,q)$. Since the tangent hyperplanes at $K'$ are exactly those hyperplanes containing $H_2$ points of $K'$, we find that if we apply the duality $\delta^{-1}$ to $\mathrm{PG}(2n+1,q)^D$, the tangent hyperplanes of $K'$ are mapped bijectively to the points of $K$ (see Lemma \ref{correspondence-ell}). Hence we can now apply Corollary \ref{char-ell} with $K'$ in $\mathrm{PG}(2n+1,q)^D$ replaced by $K$ in $\mathrm{PG}(2n+1,q)$. We conclude that $K$ is the point set of a $Q^-(2n+1,q)$.\qed
\end{proof}

\subsection{Parabolic quadrics}

First of all let us recall what the intersections of a $Q(2n,q)$ with hyperplanes and spaces of codimension 2 look like. A hyperplane can intersect a $Q(2n,q)$ either in a $Q^-(2n-1,q)$, a $Q^+(2n-1,q)$ or in a cone $pQ(2n-2,q)$. 

A space of codimension 2 can intersect a parabolic quadric $Q(2n,q)$ either in a $Q(2n-2,q)$, a cone $pQ^+(2n-3,q)$, a cone $pQ^-(2n-3,q)$ or a cone $LQ(2n-4,q)$ (notice that this cone contains the same number of points as a $Q(2n-2,q)$).
So the intersection numbers with hyperplanes are
$$H_1=\frac{(q^n-1)(q^{n-1}+1)}{q-1},H_2=\frac{(q^n+1)(q^{n-1}-1)}{q-1},$$
$$H_3=1+q\frac{q^{2n-2}-1}{q-1}.$$

The intersection numbers with spaces of codimension 2 are 
$$\:C_1=\frac{q^{2n-2}-1}{q-1},\:C_2=1+q\frac{(q^{n-1}-1)(q^{n-2}+1)}{q-1},$$
$$C_3=1+q\frac{(q^{n-1}+1)(q^{n-2}-1)}{q-1}.$$ 

From now on let $K$ be a set of points in $\mathrm{PG}(2n,q)$ having the same intersection numbers with hyperplanes and codimension $2$-spaces as a $Q(2n,q)$. We want to prove that $K$ is the point set of a $Q(2n,q)$. We will call subspaces intersecting $K$ in a given
number $m$ of points, {\em subspaces of type $m$}. For obvious reasons a hyperplane intersecting
$K$ in $H_{2}$ points will also be called a {\em tangent hyperplane}.

\begin{lemma}
The set $K$ contains $\left|Q(2n,q)\right|$ points.
\end{lemma}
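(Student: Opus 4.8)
The plan is to determine $x := |K|$ by the same double-counting philosophy used in the previous three cases, but pushed one order further, because a parabolic quadric has \emph{three} hyperplane intersection numbers rather than two. Write $h_1,h_2,h_3$ for the numbers of hyperplanes meeting $K$ in $H_1,H_2,H_3$ points. Counting incident pairs $(p,\alpha)$ with $p\in K\cap\alpha$ and triples $(p,r,\alpha)$ with $p\neq r$ in $K\cap\alpha$, together with the trivial identity counting all hyperplanes, gives
\[ \sum_i h_i = \frac{q^{2n+1}-1}{q-1},\quad \sum_i h_iH_i = x\frac{q^{2n}-1}{q-1},\quad \sum_i h_iH_i(H_i-1)=x(x-1)\frac{q^{2n-1}-1}{q-1}. \]
Since $H_1,H_2,H_3$ are distinct these solve uniquely for $h_1,h_2,h_3$ as quadratic functions of $x$; the relations $H_1+H_2=2H_3$ and $H_1-H_2=2q^{n-1}$ make that computation short. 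The essential difficulty is already visible: three equations in the four unknowns $h_1,h_2,h_3,x$ do \emph{not} pin down $x$, so unlike the earlier cases no quadratic in $x$ drops out and integrality alone does not suffice.

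To produce the missing equation I would count one level higher, namely the ordered triples $(p,r,s)$ of distinct points of $K$ lying in a common hyperplane. The left-hand side $\sum_i h_iH_i(H_i-1)(H_i-2)$ is, by the previous step, a known function of $x$. On the right, the number of hyperplanes through three points is $\frac{q^{2n-1}-1}{q-1}$ when they are collinear and $\frac{q^{2n-2}-1}{q-1}$ otherwise, so the count equals $x(x-1)(x-2)\frac{q^{2n-2}-1}{q-1}+\Lambda\,q^{2n-2}$, where $\Lambda$ is the number of ordered collinear triples in $K$. This is genuine line-intersection data, which the hypotheses do \emph{not} supply, and its appearance is the crux of the whole argument.

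The device to eliminate $\Lambda$ is to run the identical programme with the \emph{codimension-$2$} subspaces, whose three intersection numbers $C_1,C_2,C_3$ are available by hypothesis. The first three standard equations for codimension-$2$ subspaces fix their type-frequencies $c_1,c_2,c_3$ as functions of $x$, and counting ordered triples of distinct points of $K$ inside a common codimension-$2$ subspace yields a second expression in which the same $\Lambda$ occurs, now with coefficient equal to the difference between the numbers of codimension-$2$ subspaces containing a line and containing a plane. Both coefficients of $\Lambda$ are nonzero, so I would solve each triple-count for $\Lambda$ and equate the two. Because $h_i(x)$ and $c_j(x)$ are already known, this is a single polynomial equation in $x$ alone, of which $x=|Q(2n,q)|$ is a root.

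Finally I would discard the spurious roots exactly as in the Hermitian, hyperbolic and elliptic lemmas: substituting a spurious value of $x$ back into the formulas for $h_i$ and $c_j$, or into the count of non-tangent hyperplanes through a fixed codimension-$2$ space of each type, and showing these fail to be non-negative integers for $n\ge 2$ and $q>2$. I expect the bookkeeping in this last elimination, rather than any conceptual point, to be the main technical burden; the one genuinely new idea relative to the preceding cases is the cross-elimination of the collinear-triple number $\Lambda$ between the hyperplane and codimension-$2$ counts.
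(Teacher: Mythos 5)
Your route to the crucial extra equation is genuinely different from the paper's, and the algebraic core of it is sound. The paper gets its third relation not from higher-order point counts but from hyperplane/codimension-$2$ incidences: it first determines the constants $m_i^j$ (the number of codimension-$2$ spaces of type $C_i$ inside a hyperplane of type $H_j$) from three local counting equations, observes that $m_2^2=0$, deduces that every codimension-$2$ space of type $C_2$ lies on a number $f(K)$ of hyperplanes of type $H_1$ which is linear in $|K|$, and then uses the flag count $h_1m_2^1=c_2f(K)$ together with the expression $c_2=c_2(|K|)$ (quadratic in $|K|$, from the global codimension-$2$ equations) to convert the hyperplane relation $s|K|^2+t|K|+h_1=0$ into a cubic in $|K|$. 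Your alternative --- counting ordered triples of distinct points of $K$ in a common hyperplane and in a common codimension-$2$ space, and cross-eliminating the collinear-triple count $\Lambda$ --- also yields a cubic, and avoids the $m_i^j$ analysis entirely. One point you assert too quickly is that the elimination is nondegenerate: nonvanishing of the coefficients of $\Lambda$ only lets you solve for $\Lambda$ twice; you also need the two resulting cubics in $x$ to differ. They do: the coefficient of $x^3$ in your hyperplane expression for $\Lambda$ is $-\frac{q^{2n-2}-1}{(q-1)q^{2n-2}}$, in the codimension-$2$ expression it is $-\frac{q^{2n-3}-1}{(q^2-1)q^{2n-3}}$, and their difference is $-\frac{q^{2n-1}-1}{(q^2-1)q^{2n-2}}\neq0$, so equating them really does give a cubic in $x$ having $|Q(2n,q)|$ as a root.

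The genuine gap is your final step. You propose to discard the spurious roots ``exactly as in the Hermitian, hyperbolic and elliptic lemmas,'' by substituting them into various counts and checking integrality. But in those cases the size equation was a quadratic, so the single spurious root $x_2$ was an explicit expression in $q$, which is precisely what made substitution-and-integrality possible. Here the spurious roots are the two roots of the quadratic obtained after dividing your cubic by $\bigl(x-\frac{q^{2n}-1}{q-1}\bigr)$, and they are not tractable closed forms; indeed the paper remarks that Maple cannot solve its cubic directly for general $n$ and $q$, and it escapes by a different device --- computing the sum and the product of the three roots and deducing that the two spurious roots are \emph{not real}, rather than real but non-integral. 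So the exclusion step cannot simply be cited from the earlier cases: you would have to carry out the analogous computation for your (different) cubic, e.g.\ factor out the known root and show that the residual quadratic has negative discriminant, or otherwise rule out its roots. That computation is exactly the hard content of this lemma, and it is missing from your proposal.
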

\begin{proof}
Let $h_i$, respectively $c_i$, denote the number of hyperplanes of type $H_i$, respectively the number of codimension $2$ spaces of type $C_i$. By counting pairs and triples as in Lemma \ref{puntenHyp}, but now with respect to hyperplanes as well as with respect to codimension
$2$ spaces, we obtain

\begin{eqnarray}
\sum_i h_i=\frac{q^{2n+1}-1}{q-1},\label{one}
\\
\sum_i h_iH_i = \frac{q^{2n}-1}{q-1}\left|K\right|\label{two},
\\
\sum_i h_iH_i(H_i-1)= \frac{q^{2n-1}-1}{q-1}\left|K\right|(\left|K\right|-1)\label{three},
\\
\sum_i c_i=\frac{(q^{2n+1}-1)(q^{2n}-1)}{(q^2-1)(q-1)}\label{four},
\\
\sum_i c_iC_i=\left|K\right|\frac{(q^{2n}-1)(q^{2n-1}-1)}{(q^2-1)(q-1)}\label{five}, 
\\
\sum_i c_iC_i(C_i-1)=\left|K\right|(\left|K\right|-1)\frac{(q^{2n-1}-1)(q^{2n-2}-1)}{(q^2-1)(q-1)}\label{six}. 
\end{eqnarray}

Now consider a hyperplane $\Pi$ of type $H_j$ and denote by $m_i^j$ the number of codimension $2$ spaces of type $C_i$ it contains. By counting pairs $(p,\Delta)$ and triples $(p,r,\Delta)$, with $p\neq r$ points of $K\cap\Pi$, $\Delta\subset\Pi$ 
a codimension $2$ space, and $p,r\in\Delta$, we obtain
\[\sum_i m_i^j=\frac{q^{2n}-1}{q-1}\]
\[\sum_i m_i^jC_i=H_j\frac{q^{2n-1}-1}{q-1}\]
\[\sum_i m_i^jC_i(C_i-1)=H_j(H_j-1)\frac{q^{2n-2}-1}{q-1}\] 

From these equations all values $m_i^j$ can easily be determined. As $m_2^2=0$ we see that the number of hyperplanes of type $H_1$ through a codimension $2$ space of type $C_{2}$ is a constant $f(K)$ depending only on the size of $K$, with $f(K)$ linear in $\left|K\right|$.

By counting pairs $(H,\Delta)$, with $H$ a hyperplane of type $H_{1}$, $\Delta\subset H$ a codimension $2$ space of type $C_{2}$, we obtain
\begin{equation}\label{seven}
h_1m_2^1=c_2f(K).
\end{equation}

So $h_1=h_1(c_2,K)$, with $h_{1}(c_{2},K)$ linear in $\left|K\right|$. From Equations (\ref{four}), (\ref{five}) and (\ref{six}) we can obtain an expression for $c_2$ depending only on $\left|K\right|$, say $c_2=c_2(K)$, with $c_{2}(K)$ quadratic in $\left|K\right|$. 
From Equations (\ref{one}), (\ref{two}) and (\ref{three}) we can now obtain a quadratic equation in $\left|K\right|$ of the form $s\left|K\right|^2+t\left|K\right|+h_1=0$. By substitution we obtain 
\begin{equation}\label{K}
s\left|K\right|^2+t\left|K\right|+h_1(c_2(K),K)=0,
\end{equation}
which turns out to be a cubic equation in $\left|K\right|$.  
One simply checks that $\left|K\right|=\frac{q^{2n}-1}{q-1}$ is a solution of Equation (\ref{K}). 

We want to exclude the other roots of Equation (\ref{K}) as possible sizes for the set $K$.

Though Maple is not able to directly calculate the other roots for general $n$ and $q$ it is not too hard to determine the product and sum of the roots of Equation (\ref{K}) with it. 

One obtains that the three roots have product
$$
{\frac {{q}^{4n-2}+q^{2n+1}-3q^{2n}+q^{2n-1}-q^{2n-2}+1}{\left( q-1 \right) ^{3}
 \left( {q}^{2\,n-1}-1 \right) }},
$$

and sum
$$
3\,{\frac { \left( {q}^{n}+1 \right)  \left( {q}^{n}-1 \right) }{q-1}}.
$$
From the above expressions one can deduce that the roots different from $\frac{q^{2n}-1}{q-1}$ are not real numbers, a contradiction.
\end{proof}

\begin{lemma}\label{aantal}
There are exactly $\frac{q^{2n}-1}{q-1}$ tangent hyperplanes. Furthermore,\\ every codimension $2$ space of type $C_2$ or $C_3$ is contained in exactly one tangent hyperplane.
\end{lemma}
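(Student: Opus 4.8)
The plan is to establish the two assertions separately, in both cases by elementary double counting built on the previous lemma (which gives $|K|=\frac{q^{2n}-1}{q-1}$) and on the incidence numbers $m_i^j$, the number of codimension-$2$ spaces of type $C_i$ contained in a fixed hyperplane of type $H_j$, that were already extracted there. For the count of tangent hyperplanes I would reopen the counting identities (\ref{one}), (\ref{two}) and (\ref{three}) and, after substituting the now known value of $|K|$, read them as a linear system in the three unknowns $h_1,h_2,h_3$. Since $H_1,H_2,H_3$ are pairwise distinct this system is of Vandermonde type, hence uniquely solvable; solving it and extracting the number of tangent hyperplanes, namely the cone sections $pQ(2n-2,q)$ (those meeting $K$ in $H_3$ points), yields exactly $\frac{q^{2n}-1}{q-1}$.

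For the second assertion the decisive input is a pair of vanishing incidence numbers coming from the system that determines the $m_i^j$: one has $m_2^2=0$ (a hyperplane of type $H_2$ contains no codimension-$2$ space of type $C_2$), as already noted, and symmetrically $m_3^1=0$. Dualizing these statements, no space of type $C_2$ lies in a hyperplane of type $H_2$, and no space of type $C_3$ lies in a hyperplane of type $H_1$. Thus, fixing a codimension-$2$ space $\Pi$ of type $C_2$, the $q+1$ hyperplanes through it are only of types $H_1$ and $H_3$. Writing $a$ and $b$ for the numbers of those of type $H_1$ and of type $H_3$ (tangent) respectively, the relation $a+b=q+1$ together with the point count $aH_1+bH_3=|K|+qC_2$ (obtained by counting the points of $K$ lying outside $\Pi$, each in a unique hyperplane through $\Pi$) forms a $2\times2$ system with distinct coefficients, whose unique solution is $a=q$, $b=1$. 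Running the identical argument for a space of type $C_3$, now with $H_1$ replaced by $H_2$ and using $m_3^1=0$, again returns exactly one tangent hyperplane.

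The computations are routine but heavy, so I would carry out the explicit solving of both linear systems with a computer algebra package, as the authors do elsewhere. The one genuine point to appreciate is that the two vanishings $m_2^2=0$ and $m_3^1=0$ collapse each local count from a $3\times3$ to a $2\times2$ system: this is exactly what upgrades the conclusion from the mere average that a blunt flag count between $C_i$-spaces and tangent hyperplanes would supply, to the sharp statement that every individual space of type $C_2$ or $C_3$ meets precisely one tangent hyperplane. The main obstacle I anticipate is therefore not conceptual but bookkeeping: checking that the unique solutions are honest nonnegative integers and that the first system truly returns $\frac{q^{2n}-1}{q-1}$.
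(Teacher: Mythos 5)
Your proposal is correct and follows essentially the same route as the paper: the first assertion by solving the linear system from Equations (\ref{one})--(\ref{three}) once $|K|=\frac{q^{2n}-1}{q-1}$ is known, and the second by exploiting $m_2^2=m_3^1=0$ so that the $q+1$ hyperplanes through a $C_2$- (resp.\ $C_3$-) space split into types $H_1$ (resp.\ $H_2$) and $H_3$ only, after which a single point count forces exactly one tangent hyperplane. Your $2\times 2$ system $a+b=q+1$, $aH_{i-1}+bH_3=|K|+qC_i$ is just a rewriting of the paper's one-variable equation $T_i(H_3-C_i)+(q+1-T_i)(H_{i-1}-C_i)+C_i=|K|$, and you correctly identify the tangent hyperplanes as those of type $H_3$ (the cone sections), which is what the paper's proof actually uses.
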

\begin{proof}
The first assertion follows from Equations (\ref{one}), (\ref{two}) and (\ref{three}) once we know $\left|K\right|=\frac{q^{2n}-1}{q-1}$.
To prove the second assertion, we notice that $m_3^1=m_2^2=0$. Hence, if $T_i$ denotes the number of tangent hyperplanes containing a given codimension $2$ space of type $C_i$, $i=2,3$, then
\[T_i(H_3-C_i)+(q+1-T_i)(H_{i-1}-C_i)+C_i=\left|K\right|.\]
We obtain $T_2=T_3=1$.
\qed
\end{proof}

\begin{lemma}\label{equal}
For every codimension $2$-space of type $C_1$ the number of hyperplanes of type $H_1$ containing it is equal to the number of hyperplanes of type $H_2$ in which it is contained.
\end{lemma}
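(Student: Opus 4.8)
The plan is to count the points of $K$ via the pencil of hyperplanes on a fixed codimension $2$ space $\Pi$ of type $C_1$. Since $\Pi$ has codimension $2$ in $\mathrm{PG}(2n,q)$, it lies in exactly $q+1$ hyperplanes, and these partition the points of $\mathrm{PG}(2n,q)\setminus\Pi$ (each point outside $\Pi$ spans, together with $\Pi$, a unique hyperplane of the pencil). I would write $a$, $b$, $d$ for the number of these hyperplanes of type $H_1$, $H_2$, $H_3$ respectively; then $a+b+d=q+1$, and the statement to be proved is exactly $a=b$. Note that $d=T_1$ is the (still undetermined) number of tangent hyperplanes through a $C_1$-space, and the argument will not require its value.

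Next I would set up a single point-count equation. A hyperplane of type $H_j$ containing $\Pi$ meets $K$ in $H_j-C_1$ points outside $\Pi$, so sorting the points of $K$ not on $\Pi$ according to the hyperplane of the pencil containing them gives
\[|K|=C_1+a(H_1-C_1)+b(H_2-C_1)+d(H_3-C_1).\]
Using $|K|=\frac{q^{2n}-1}{q-1}$, a direct simplification of the relevant differences yields $H_1-C_1=q^{2n-2}+q^{n-1}$, $H_2-C_1=q^{2n-2}-q^{n-1}$, $H_3-C_1=q^{2n-2}$, and $|K|-C_1=(q+1)q^{2n-2}$.

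Substituting these values and isolating the common term $q^{2n-2}$ turns the point-count equation into
\[(a+b+d)\,q^{2n-2}+(a-b)\,q^{n-1}=(q+1)\,q^{2n-2}.\]
Since $a+b+d=q+1$, the two $q^{2n-2}$-terms cancel, leaving $(a-b)\,q^{n-1}=0$, and hence $a=b$, as required.

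The computation is entirely routine, so there is no genuine obstacle; the one point worth flagging is the mild surprise that a single counting equation suffices, with no need of an independent second relation. The reason is structural: the three differences $H_1-C_1$, $H_2-C_1$, $H_3-C_1$ share the common value $q^{2n-2}$, the hyperbolic and elliptic sections deviating by $+q^{n-1}$ and $-q^{n-1}$ symmetrically while the tangent section does not deviate at all. This symmetry, combined with $a+b+d=q+1$ and $|K|-C_1=(q+1)q^{2n-2}$, forces the symmetric deviation to vanish, which is precisely $a=b$. Since everything rests on these few identities, the only care needed is in verifying the three subtractions $H_j-C_1$ and the value $|K|-C_1$.
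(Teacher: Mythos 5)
Your proposal is correct and is essentially the paper's own argument: the paper's proof consists of noting $\left|K\right|=(q+1)(H_3-C_1)+C_1$ and $(H_1+H_2)/2=H_3$, which are exactly your identities $\left|K\right|-C_1=(q+1)q^{2n-2}$, $H_3-C_1=q^{2n-2}$ and $H_{1}-C_1$, $H_2-C_1 = q^{2n-2}\pm q^{n-1}$, fed into the same pencil-of-hyperplanes count. You have merely written out explicitly the cancellation that the paper leaves as ``the lemma follows.''
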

\begin{proof}
One notices that $\left|K\right|=(q+1)(H_3-C_1)+C_1$ and that $(H_1+H_2)/2=H_3$. The lemma follows.
\qed
\end{proof}
\begin{lemma}
Let $\gamma$ be  a codimension $3$ space contained in a hyperplane $H$ of type $H_1$. Suppose that $\gamma$ is contained in $N_H$ codimension $2$ spaces $\alpha$ of type $C_2$, such that $\gamma\subset\alpha\subset H$. 
Then $\left|\gamma\cap K\right|=q^{n-2}N_{H}+\frac{(q^{n-1}+1)(q^{n-2}-1)}{q-1}$. Furthermore if $\gamma$ is also contained in a hyperplane $E$ of type $H_2$, then $N_H\leq2$.
\end{lemma}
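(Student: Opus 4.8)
The plan is to compute $|\gamma\cap K|$ by a double count over the pencil of codimension $2$ spaces through $\gamma$ inside $H$, and then to obtain the bound by running the same count inside $E$. Since $\gamma$ has codimension $2$ in $H$, the codimension $2$ spaces $\alpha$ of $\mathrm{PG}(2n,q)$ with $\gamma\subset\alpha\subset H$ are exactly the $q+1$ hyperplanes of $H$ through $\gamma$. Counting incidences of the points of $K\cap H$ with these $\alpha$ (each point of $\gamma\cap K$ lies in all $q+1$ of them, each other point of $H\cap K$ in exactly one) gives
\[\sum_\alpha |\alpha\cap K| = H_1 + q\,|\gamma\cap K|.\]
I then invoke the structural fact $m_3^1=0$, established in the proof of the preceding lemma: a hyperplane of type $H_1$ contains no codimension $2$ space of type $C_3$. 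Hence each of the $q+1$ spaces $\alpha$ is of type $C_1$ or $C_2$, with exactly $N_H$ of them of type $C_2$, so $\sum_\alpha|\alpha\cap K| = (q+1)C_1 + N_H(C_2-C_1)$.

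The elementary identities $C_2-C_1=q^{n-1}$ and $(q+1)C_1-H_1=\frac{q(q^{n-1}+1)(q^{n-2}-1)}{q-1}$ then yield, after dividing by $q$, the first assertion
\[|\gamma\cap K| = q^{n-2}N_H + \frac{(q^{n-1}+1)(q^{n-2}-1)}{q-1}.\]
For the second assertion I repeat the argument verbatim inside the hyperplane $E$ of type $H_2$: since $m_2^2=0$, every codimension $2$ space $\beta$ with $\gamma\subset\beta\subset E$ is of type $C_1$ or $C_3$, say $N_E$ of them of type $C_3$, and $C_3-C_1=-q^{n-1}$ now gives the second expression
\[|\gamma\cap K| = -q^{n-2}N_E + \frac{(q^{n-2}+1)(q^{n-1}-1)}{q-1}.\]

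Equating the two expressions for $|\gamma\cap K|$ and using that the two constant terms differ by exactly $2q^{n-2}$, the factors $q^{n-2}$ cancel and I obtain the clean relation $N_H+N_E=2$; since $N_E\ge 0$ this forces $N_H\le 2$. The only genuine input beyond bookkeeping is the vanishing $m_3^1=m_2^2=0$, which restricts the $\alpha$'s (respectively the $\beta$'s) to two types and thereby collapses each double count to a single unknown; everything else reduces to the elementary identities among $C_1,C_2,C_3,H_1,H_2$ recorded above, so I expect no real obstacle once those identities are verified.
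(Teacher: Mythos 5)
Your proposal is correct and is essentially the paper's own argument: the paper writes the same double count over the $q+1$ codimension $2$ spaces through $\gamma$ inside $H$ in the partitioned form $(q+1-N_H)(C_1-X)+N_H(C_2-X)+X=H_1$ (which expands to exactly your incidence identity $H_1+qX=(q+1)C_1+N_H(C_2-C_1)$), relies on the same inputs $m_3^1=m_2^2=0$, and likewise derives $N_E=2-N_H$ inside $E$ to conclude $N_H\leq 2$. The only difference is cosmetic bookkeeping (you record the identities $C_2-C_1=q^{n-1}$, $C_3-C_1=-q^{n-1}$ explicitly), so no further changes are needed.
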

\begin{proof}
Let $X$ denote the number of points of $K$ contained in $\gamma$. As $m_3^1=0$ we have
\[(q+1-N_H)(C_1-X)+N_H(C_2-X)+X=H_1.\]
It follows that 
\[X=N_Hq^{n-2}+\frac{(q^{n-1}+1)(q^{n-2}-1)}{q-1}.\]
Next let $E$ be a hyperplane of type $H_2$ containing $\gamma$ and let $N_E$ be the number of codimension $2$ spaces $\beta$ of type $C_3$ such that $\gamma\subset\beta\subset E$. Since $m_2^2=0$ we obtain that
\[(q+1-N_E)(C_1-X)+N_E(C_3-X)+X=H_2.\]
Substitution of the higher obtained expression for $X$ in terms of $N_H$ yields 
\[N_E=2-N_H.\]
As $N_E\geq0$ the lemma follows.  
\qed
\end{proof}

\begin{lemma}
A codimension $3$ space that is contained in a hyperplane of type $H_1$ contains $Nq^{n-2}+\frac{(q^{n-1}+1)(q^{n-2}-1)}{q-1}$ points of $K$, with $N\in\{0,1,2,q+1\}$.
\end{lemma}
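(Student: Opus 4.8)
The plan is to read $|\gamma\cap K|$ off the previous lemma and then pin down the single parameter occurring there. By the previous lemma, $|\gamma\cap K|=N_Hq^{n-2}+\frac{(q^{n-1}+1)(q^{n-2}-1)}{q-1}$, where $N_H$ denotes the number of codimension $2$ spaces $\alpha$ of type $C_2$ with $\gamma\subset\alpha\subset H$, and $H$ is the given hyperplane of type $H_1$. So it suffices to prove $N_H\in\{0,1,2,q+1\}$, and I would do this by distinguishing whether or not $\gamma$ is contained in some hyperplane of type $H_2$.

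If $\gamma$ lies in a hyperplane of type $H_2$, then the second assertion of the previous lemma applies verbatim and yields $N_H\le 2$, so $N_H\in\{0,1,2\}$ in this case.

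The remaining case is that $\gamma$ lies in no hyperplane of type $H_2$. Here I would pass to the quotient plane $\Sigma=\mathrm{PG}(2n,q)/\gamma\cong\mathrm{PG}(2,q)$, whose points are the codimension $2$ spaces containing $\gamma$ (each of type $C_1$, $C_2$ or $C_3$) and whose lines are the hyperplanes containing $\gamma$ (each of type $H_1$, $H_2$ or $H_3$), with incidence given by containment; since every hyperplane containing such a codimension $2$ space automatically contains $\gamma$, the plane $\Sigma$ faithfully records all the relevant incidences. In this case $\Sigma$ has no line of type $H_2$. The given $H$ is a line of type $H_1$, and since $m_3^1=0$ no point of $H$ has type $C_3$, so every point of $H$ has type $C_1$ or $C_2$. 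But by Lemma \ref{equal} a point of type $C_1$ lies on equally many lines of type $H_1$ and of type $H_2$, and as there are no $H_2$-lines it would then lie on no $H_1$-line at all --- contradicting that it lies on $H$. Hence $H$ carries no point of type $C_1$, so all $q+1$ of its points have type $C_2$ and $N_H=q+1$.

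Putting the two cases together gives $N_H\in\{0,1,2,q+1\}$, which with the displayed formula is exactly the assertion. The crux is the second case: the key point is that forbidding hyperplanes of type $H_2$ through $\gamma$, via the balancing supplied by Lemma \ref{equal}, forces every codimension $2$ space between $\gamma$ and the $H_1$-hyperplane $H$ to be of type $C_2$, while the first case is dispatched directly by the previous lemma. Note that this argument needs only $m_3^1=0$, Lemma \ref{equal}, and the previous lemma, so it sidesteps any reliance on the tangent-hyperplane count.
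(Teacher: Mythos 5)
Your proof is correct and takes essentially the same approach as the paper: the same case split on whether $\gamma$ lies in a hyperplane of type $H_2$, with the first case settled by the second assertion of the previous lemma and the second case settled by combining $m_3^1=0$ with Lemma \ref{equal} to force every codimension $2$ space between $\gamma$ and $H$ to be of type $C_2$ (the quotient-plane language is just a convenient rephrasing of the paper's argument). The one minor streamlining is your exclusion of type $C_3$: you only rule out $C_3$ spaces inside $H$, which needs nothing beyond $m_3^1=0$, whereas the paper rules out $C_3$ spaces through $\gamma$ altogether, a step that implicitly also uses the tangent-hyperplane count $T_3=1$ of Lemma \ref{aantal}.
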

\begin{proof}
Let $\gamma$ be any codimension $3$ space contained in a hyperplane of type $H_{1}$, and set $X=\left|\gamma\cap K\right|$ . If $\gamma$ is contained in hyperplanes of type $H_1$ as well as of type $H_2$ there is nothing to proof because of the previous lemma.

If $\gamma$ is contained in  no hyperplane of type $H_2$, then $\gamma$ cannot be contained in a codimension $2$ space of type $C_3$ (since through each codimension $2$ space of type $C_3$ there passes a hyperplane of type $H_2$, which follows from $m_{3}^{1}=0$). 
Furthermore, since the number of $H_2$ hyperplanes containing a given codimension $2$ space of type $C_1$ equals the number of $H_1$ hyperplanes containing it, $\gamma$ cannot be contained in a codimension $2$ space $\alpha$ of type 
$C_1$ such that $\gamma\subset\alpha\subset H$. 
Hence in $H$ all codimension $2$ spaces containing $\gamma$ must be of type $C_2$. By the previous lemma we obtain that $X=(q+1)q^{n-2}+\frac{(q^{n-1}+1)(q^{n-2}-1)}{q-1}$.\qed
\end{proof}

\begin{corollary}\label{hypervlak-hyp}
Every hyperplane of type $H_1$ intersects $K$ in the point set of a non-singular hyperbolic quadric.
\end{corollary}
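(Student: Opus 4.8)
The plan is to study $K$ inside a fixed hyperplane $H$ of type $H_1$, to show that there it carries exactly the intersection data of a non-singular hyperbolic quadric one dimension lower, and then to quote the hyperbolic case already settled in the subsection on hyperbolic quadrics. Write $K':=K\cap H$ and regard $H$ as $\mathrm{PG}(2n-1,q)$. Note first that $|K'|=H_1$, which is precisely $|Q^+(2n-1,q)|$, since $\frac{(q^n-1)(q^{n-1}+1)}{q-1}=\frac{q^{2n-1}-1}{q-1}+q^{n-1}$.

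First I would reinterpret the available intersection numbers inside $H$. A hyperplane of $H$ is a codimension $2$ space of $\mathrm{PG}(2n,q)$ contained in $H$, and it meets $K'$ in as many points as it meets $K$. Because no codimension $2$ space of type $C_3$ lies in a hyperplane of type $H_1$ (that is, $m_3^1=0$), such a space has type $C_1$ or $C_2$; and $C_1=\frac{q^{2n-2}-1}{q-1}$ and $C_2=1+q\frac{(q^{n-1}-1)(q^{n-2}+1)}{q-1}$ are exactly the two hyperplane intersection numbers of $Q^+(2n-1,q)$. Next, a codimension $2$ space of $H$ is a codimension $3$ space of $\mathrm{PG}(2n,q)$ lying in $H$, so by the previous lemma it meets $K'$ in $Nq^{n-2}+\frac{(q^{n-1}+1)(q^{n-2}-1)}{q-1}$ points for some $N\in\{0,1,2,q+1\}$. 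The one genuinely computational step is to check that, for $N=0,1,2,q+1$ in turn, these four values coincide with the four codimension $2$ intersection numbers of $Q^+(2n-1,q)$; this is a routine rational identity that I would verify case by case (for instance $N=1$ gives $\frac{q^{2n-3}-1}{q-1}$, matching the section $1+q\frac{q^{2n-4}-1}{q-1}$, while $N=0$ gives $\frac{(q^{n-1}+1)(q^{n-2}-1)}{q-1}=|Q^-(2n-3,q)|$).

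Having matched both lists, $K'$ has the same intersection numbers with hyperplanes and with codimension $2$ spaces of $H\cong\mathrm{PG}(2n-1,q)$ as a non-singular hyperbolic quadric $Q^+(2n-1,q)$. I would then apply Theorem \ref{hyperbolic} verbatim (with $n$ replaced by $n-1$) to conclude that $K'$ is the point set of a non-singular $Q^+(2n-1,q)$, which is exactly the assertion of the corollary.

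The obstacle I expect is dimensional rather than algebraic: Theorem \ref{hyperbolic} rests on Theorem \ref{quadric}, which requires the ambient space to have dimension at least $4$, so the reduction above is valid only once $\dim H=2n-1\geq 5$, i.e. $n\geq 3$. The remaining case $n=2$ must be handled separately: there $H\cong\mathrm{PG}(3,q)$ and $K'$ is a $(0,1,2,q+1)$-set of size $(q+1)^2$ (the constant term $\frac{(q^{n-1}+1)(q^{n-2}-1)}{q-1}$ vanishes when $n=2$, so codimension $2$ spaces are just lines meeting $K'$ in $0,1,2$ or $q+1$ points), and one identifies it with $Q^+(3,q)$ by the classical low-dimensional characterization underlying the $\mathrm{PG}(4,q)$ results recalled in the introduction (Theorem \ref{Tallini}). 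I would therefore carry out the main argument for $n\geq 3$ and dispose of $n=2$ by this low-dimensional fact.
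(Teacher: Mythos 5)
Your main case ($n\geq 3$) is exactly the paper's proof: the paper likewise combines the preceding lemma (a codimension $3$ space inside a hyperplane $H$ of type $H_1$ meets $K$ in $Nq^{n-2}+\frac{(q^{n-1}+1)(q^{n-2}-1)}{q-1}$ points with $N\in\{0,1,2,q+1\}$) with the fact $m_3^1=0$ to conclude that, inside $H\cong\mathrm{PG}(2n-1,q)$, the set $K\cap H$ has the hyperplane and codimension $2$ intersection numbers of $Q^+(2n-1,q)$, and then applies Theorem \ref{hyperbolic} precisely when $n>2$. Your explicit verification that the four values $Nq^{n-2}+\frac{(q^{n-1}+1)(q^{n-2}-1)}{q-1}$ coincide with the codimension $2$ numbers of $Q^+(2n-1,q)$, and that $C_1,C_2$ are its hyperplane numbers, is correct and is exactly the (unstated) computation behind the paper's one-line proof.

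The one defect is your treatment of $n=2$. You appeal to ``the classical low-dimensional characterization underlying Theorem \ref{Tallini}'', but Theorem \ref{Tallini} characterizes $Q(4,q)$ in $\mathrm{PG}(4,q)$ by plane and solid intersections; it contains no characterization of $Q^+(3,q)$ in $\mathrm{PG}(3,q)$, so as written this step rests on a result that does not exist in the paper. Moreover, being a $(0,1,2,q+1)$-set of size $(q+1)^2$ is not by itself the hypothesis you want to feed into anything; you also need the plane intersection numbers ($C_1=q+1$, $C_2=2q+1$ when $n=2$). The paper instead invokes the remark after Theorem \ref{Main}: in ambient dimension $3$ the hyperbolic conclusion of the main theorem still holds, ``easily seen using Theorem \ref{BL}'', i.e.\ one forms the point-line geometry whose points are $K\cap H$ and whose lines are the $(q+1)$-lines, checks it is a fully embedded Shult space with no point collinear with all others (this is where the plane numbers and $q>2$ enter), and applies Buekenhout--Lef\`evre together with the size count. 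Substituting that argument (or that reference) for your citation repairs the $n=2$ case, after which your proof coincides with the paper's.
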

\begin{proof}
By the previous lemma the conditions of Theorem \ref{hyperbolic} are satisfied in each hyperplane of type $H_1$, whenever $n>2$. If $n=2$, the corollary follows by the remark after Theorem \ref{Main}.\qed
\end{proof}

\begin{lemma}\label{punt-hyperbolisch}
Every point of $K$ is contained in at least one hyperplane of type $H_1$.
\end{lemma}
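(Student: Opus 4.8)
The plan is to argue by contradiction through a double count of the hyperplanes passing through a fixed point. Suppose some point $p\in K$ lies on no hyperplane of type $H_1$. Since every hyperplane of $\mathrm{PG}(2n,q)$ meets $K$ in $H_1$, $H_2$ or $H_3$ points, every hyperplane through $p$ must then be of type $H_2$ or $H_3$. Denote by $a_2$, respectively $a_3$, the number of hyperplanes of type $H_2$, respectively $H_3$, through $p$; the whole argument reduces to showing that these two numbers cannot both be non-negative.

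First I would record the two obvious counting relations. Counting all hyperplanes through $p$ gives $a_2+a_3=\frac{q^{2n}-1}{q-1}$. Counting the pairs $(r,H)$ with $r\in K\setminus\{p\}$ and $H$ a hyperplane through both $p$ and $r$ — where each such $r$ contributes the $\frac{q^{2n-1}-1}{q-1}$ hyperplanes through the line $pr$ — gives $a_2(H_2-1)+a_3(H_3-1)=(|K|-1)\frac{q^{2n-1}-1}{q-1}$, in which $|K|=\frac{q^{2n}-1}{q-1}$ is already known from the first lemma of this subsection. Because $H_2\neq H_3$, this is a non-degenerate $2\times 2$ linear system (its determinant is $H_3-H_2=q^{n-1}\neq 0$), so $a_2$ and $a_3$ are completely determined.

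Solving it is then routine algebra, which I expect to collapse very cleanly. Using the identities $H_3=\frac{q^{2n-1}-1}{q-1}$, $H_3-H_2=q^{n-1}$ and $|K|-1=q\,\frac{q^{2n-1}-1}{q-1}$, the numerator of $a_2$ reduces, via the cancellation $(q^{2n}-1)(q^{2n-2}-1)-(q^{2n-1}-1)^2=-q^{2n-2}(q-1)^2$, to yield $a_2=-q^{n}$. Since $a_2$ counts hyperplanes it cannot be negative, and this contradiction shows that $p$ must after all lie on a hyperplane of type $H_1$. The main point to be careful about is not conceptual but bookkeeping: one must be certain that $H_1,H_2,H_3$ are the only possible hyperplane-intersection numbers (so that $a_2+a_3$ really exhausts the hyperplanes through $p$) and that the algebra genuinely produces a negative value rather than an admissible non-negative integer — the above cancellation is exactly what forces the sign.
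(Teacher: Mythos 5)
Your proposal is correct and is essentially identical to the paper's own proof: the paper also counts the pairs $(r,H)$ with $r\in K\setminus\{p\}$ and $H$ a hyperplane through $p$ and $r$, uses $|K|=\frac{q^{2n}-1}{q-1}$ and the fact that every hyperplane through $p$ would be of type $H_2$ or $H_3$, and derives that the number $l_2$ of type-$H_2$ hyperplanes through $p$ must be negative. The only cosmetic difference is that the paper substitutes $a_3=\frac{q^{2n}-1}{q-1}-l_2$ directly into a single equation and asserts $l_2<0$, whereas you solve the $2\times 2$ system explicitly and obtain the exact (and correct) value $a_2=-q^{n}$.
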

\begin{proof}
Let $p$ be any point of $K$. Assume by way of contradiction that $p$ is contained only in hyperplanes of type $H_2$ and $H_3$. Then, by counting pairs $(r,H)$, $r\in K$, $r\neq p$, $p,r\in H$, $H$ a hyperplane, we obtain
\[ l_2(H_2-1)+\left(\frac{q^{2n}-1}{q-1}-l_2\right)(H_3-1)=\left(\frac{q^{2n}-1}{q-1}-1\right)\frac{q^{2n-1}-1}{q-1}, \]
with $l_2$ the number of hyperplanes of type $H_2$ containing $p$. It follows that $l_2<0$, an absurdity.\qed
\end{proof}

\begin{theorem}\label{theoremparabolic}
The set $K$ is the point set of a non-singular parabolic quadric.
\end{theorem}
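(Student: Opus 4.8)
The plan is to show that $K$ is a non-singular point set of type $(0,1,2,q+1)$ in $\mathrm{PG}(2n,q)$ and then to invoke Theorem~\ref{quadric}. Since $|K|=\frac{q^{2n}-1}{q-1}$ is exactly the lower bound occurring in that theorem and the ambient dimension $2n$ is even, case (i) will then immediately identify $K$ with a $Q(2n,q)$. Thus the whole argument reduces to establishing the line-type $(0,1,2,q+1)$ together with non-singularity, for which Corollary~\ref{hypervlak-hyp} and Lemma~\ref{punt-hyperbolisch} are the essential inputs.

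The key step is to prove that every line $L$ meeting $K$ in $s\geq2$ points lies in at least one hyperplane of type $H_1$. I would fix such a line and count in two ways the pairs $(r,H)$ with $r\in K\setminus L$ and $H$ a hyperplane satisfying $L\subset H\ni r$: each such $r$ spans a plane with $L$ lying in $\frac{q^{2n-2}-1}{q-1}$ hyperplanes, while summing over the $H_1$-, $H_2$- and $H_3$-hyperplanes through $L$ yields $\sum|K\cap H|-s\frac{q^{2n-1}-1}{q-1}$. The crucial arithmetic fact is that the three hyperplane intersection numbers are in arithmetic progression: one checks $H_3=\frac{q^{2n-1}-1}{q-1}$ and $H_1-H_3=H_3-H_2=q^{n-1}$. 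Combined with the identity $\frac{(q^{2n}-1)(q^{2n-2}-1)}{(q-1)^2}-\left(\frac{q^{2n-1}-1}{q-1}\right)^2=-q^{2n-2}$, the double count collapses to the clean relation $a-b=(s-1)q^{n-1}$, where $a$ and $b$ denote the numbers of $H_1$- and $H_2$-hyperplanes through $L$. For $s\geq2$ this forces $a\geq(s-1)q^{n-1}\geq1$, producing the desired $H_1$-hyperplane.

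Once this is known the line-types follow at once. If $L$ meets $K$ in $s\geq2$ points, I choose an $H_1$-hyperplane $H\supset L$; by Corollary~\ref{hypervlak-hyp} we have $K\cap H=Q^+(2n-1,q)$, and since a line meets a non-singular hyperbolic quadric in $0$, $1$, $2$ or $q+1$ points, $|L\cap K|=|L\cap(K\cap H)|\in\{2,q+1\}$. Hence $K$ is of type $(0,1,2,q+1)$. For non-singularity I would take any point $p\in K$, use Lemma~\ref{punt-hyperbolisch} to place it in an $H_1$-hyperplane $H$, and use that the non-singular quadric $K\cap H=Q^+(2n-1,q)$ has a line through $p$ not contained in its tangent hyperplane; such a line is a $2$-secant of $K\cap H$, and hence of $K$, so $p$ is not a singular point. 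Thus $K$ is a non-singular set of type $(0,1,2,q+1)$ of size $\frac{q^{2n}-1}{q-1}$, and Theorem~\ref{quadric}(i) finishes the proof.

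The step I expect to be the main obstacle is the covering claim of the second paragraph: unlike the hyperbolic, elliptic and Hermitian cases, here one cannot read off the relevant codimension-$2$ behaviour directly, because the number of tangent hyperplanes through a $C_1$-space is not pinned down by the counting, which is precisely why the dual approach used earlier breaks down for the parabolic quadric. The saving feature is the arithmetic progression of the three hyperplane numbers with common difference $q^{n-1}$, which degenerates the two-way count to a single linear equation in $a-b$ and thereby guarantees a hyperbolic section through every secant line.
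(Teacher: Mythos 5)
Your proposal is correct and follows essentially the same route as the paper: the paper also counts pairs $(r,H)$ with $r\in K\setminus L$ over the hyperplanes through a secant line to force the existence of an $H_1$-hyperplane (it phrases this as a contradiction, assuming no such hyperplane exists and deriving $n_2<0$, whereas you derive the equivalent identity $a-b=(s-1)q^{n-1}$ directly), and then concludes exactly as you do via Corollary~\ref{hypervlak-hyp}, Lemma~\ref{punt-hyperbolisch} and Theorem~\ref{quadric}. The only cosmetic difference is that the paper treats lines with $x\geq 3$ points while you treat $s\geq 2$; both yield the $(0,1,2,q+1)$-type and non-singularity needed for Theorem~\ref{quadric}(i).
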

\begin{proof}
Let $L$ be any line of $\mathrm{PG}(2n,q)$, and suppose that $\left|L\cap K\right|=x$, with $x\geq3$. Suppose there would be no hyperplane of type $H_1$ containing $L$. Then, if $n_2$ would be the number of hyperplanes of type $H_2$ containing $L$, 
we obtain by counting pairs $(r,H)$, $r\in K$, $r\notin L$, $H$ a hyperplane containing $r$ and $L$
\[ n_2(H_2-x)+\left(\frac{q^{2n-1}-1}{q-1}-n_2\right)(H_3-x)=\left(\frac{q^{2n}-1}{q-1}-x\right)\frac{q^{2n-2}-1}{q-1}. \]
This implies that $n_2<0$, a contradiction. Hence $L$ is contained in a hyperplane of type $H_1$. Corollary \ref{hypervlak-hyp} implies that $x=q+1$. Consequently $K$ is a $(0,1,2,q+1)$-set in $\mathrm{PG}(2n,q)$. By combining 
Lemma \ref{punt-hyperbolisch} and Corollary \ref{hypervlak-hyp} we also see that each point of $K$ is contained in a line $M$ such that $\left|M\cap K\right|=2$. Hence $K$ is non-singular. It follows that $K$ satisfies the conditions of Theorem \ref{quadric}. 
As $\left|K\right|=\frac{q^{2n}-1}{q-1}$ the theorem follows. \qed
\end{proof}

The Main Theorem \ref{Main} is now an immediate consequence of Theorems \ref{theoremHermitian}, \ref{hyperbolic}, \ref{elliptic} and \ref{theoremparabolic}.
\bigskip

{\bf Acknowledgements.} Part of this article was written while the authors were enjoying the warm hospitality of G. Lunardon at the Universita degli Studi di Napoli Federico II. 
The authors also want to thank J.A. Thas for careful proofreading. The first author is a Postdoctoral Fellow of the Science Foundation Flanders(FWO-Vlaanderen). The research of the second author takes place within the project "Linear codes and cryptography" of the Fund for Scientific Research Flanders (FWO-Vlaanderen) (Project nr. G.0317.06) and is supported by the Interuniversitary Attraction Poles Programme-Belgian State-Belgian Science Policy: project P6/26-Bcrypt.

\end{document}